		\title{Finite symmetry group actions on substitution tiling C*-algebras}
		\author{Charles Starling\thanks{Research supported by NSERC. Departamento de Matem\'atica Universidade Federal de Santa Catarina 88040-900 Florian\'opolis SC, Brazil. \texttt{slearch@gmail.com}}}
\newcommand{\NN}{\mathbb{N}}
\newcommand{\re}{\mathbb{R}}
\newcommand{\rd}{\mathbb{R}^d}
\newcommand{\bz}{\mathbb{Z}}
\newcommand{\rp}{\mathcal{R}_{\textnormal{punc}}}
\newcommand{\raf}{\mathcal{R}_{AF}}
\newcommand{\op}{\Omega_{\textnormal{punc}}}
\newcommand{\ppen}{\mathcal{P}_{\textnormal{Pen}}}
\newcommand{\Npro}{N_{\text{pro}}}
\newcommand{\p}{\mathcal{P}}
\newcommand{\x}{\mathbf{x}}
\newcommand{\g}{\mathscr{G}}
\newcommand{\eps}{\varepsilon}
\newcommand{\Aw}{A_\omega}
\DeclareMathOperator{\supp}{supp}
\theoremstyle{plain}
\newtheorem{theorem}{Theorem}[section]
\newtheorem{lemma}[theorem]{Lemma}
\newtheorem{corollary}[theorem]{Corollary}
\newtheorem{proposition}[theorem]{Proposition}
\newtheorem{assumption}[theorem]{Assumption}
\theoremstyle{definition}
\newtheorem{rmk}[theorem]{Remark}
\newtheorem{defn}[theorem]{Definition}
\newtheorem{example}[theorem]{Example}
\begin{document}
\maketitle
\begin{abstract}
For a finite symmetry group $G$ of an aperiodic substitution tiling system $(\p,\omega)$, we show that the crossed product of the tiling C*-algebra $\Aw$ by $G$ has real rank zero, tracial rank one, a unique trace, and that order on its K-theory is determined by the trace. We also show that the action of $G$ on $\Aw$ satisfies the weak Rokhlin property, and that it also satisfies the tracial Rokhlin property provided that $\Aw$ has tracial rank zero. In the course of proving the latter we show that $\Aw$ is finitely generated. We also provide a link between $\Aw$ and the AF algebra Connes associated to the Penrose tilings.
\end{abstract}

\section{Introduction}

A substitution tiling is a tiling of $\rd$ formed from a primitive substitution rule $\omega$ on a finite prototile set $\p=\{p_1, p_2, \dots, p_{\Npro}\}$. Most tilings of interest display finite rotational and reflectional symmetries. For instance, the famous Penrose tilings of $\re^2$ display symmetry under rotation by $^\pi\hspace{-0.05cm}/_5$. A finite group $G \subset O(d,\re)$ acting on $\p$ will be called a {\em symmetry group} for $(\p,\omega)$ if it commutes with the substitution $\omega$. Here we study the actions of such groups on C*-algebras associated to $(\p,\omega)$.

Associating a C*-algebra to an aperiodic substitution $(\p,\omega)$ goes back to Connes \cite{CoNCG} who constructed an AF algebra from Penrose tilings. He showed that Penrose tilings are in one-to-one correspondence with infinite paths through a certain Bratteli diagram, and that two such paths are tail equivalent if and only if their associated tilings could be taken to one another by an isometry of the plane. Later, Kellendonk \cite{KelNCG} defined a different C*-algebra $\Aw$ from $(\p,\omega)$; this is the reduced C*-algebra of the \' etale groupoid $\rp$ of translational equivalence only. This C*-algebra has been studied by many authors, see for instance \cite{KP00}, \cite{Pu00}, and \cite{Ph05}.

Since the C*-algebra Connes associated to the Penrose tiling is AF, it is classified by its pointed $K_0$ group by \cite{El76}. Not only is the C*-algebra $\Aw$ not in general an AF algebra, it is not known whether it can be classified by its Elliott invariant (which is essentially K-theory paired with traces). Some partial progress has been made in this direction beginning with work of Putnam \cite{Pu00} who proved that the order on $K_0(\Aw)$ was determined by the unique normalized trace on $\Aw$. To do this, Putnam used the presence of a canonical AF subalgebra $AF_\omega\subset \Aw$ which is the C*-algebra of an AF subequivalence relation $\raf\subset\rp$.

In \cite{Ph05} Phillips generalized the techniques of \cite{Pu00} to the C*-algebras of what he called {\em almost AF Cantor groupoids}. A groupoid $\g$ is of this type if it has an AF subgroupoid $\g_0\subset \g$ that is ``large'' in a suitable sense; this notably includes not only $\rp$ but also groupoids associated to free minimal actions of $\bz^d$ on the Cantor set. Phillips proved that the reduced C*-algebras of these groupoids have some nice classification properties, including real rank zero, stable rank one, and order on $K_0$ being determined by traces. He also showed that normalized traces on $C^*_r(\g)$ are in one-to-one correspondence with normalized traces on $C^*_r(\g_0)$. These results can be seen as significant progress towards answering the question of whether such C*-algebras can be classified by their Elliott invariant. Phillips notes in \cite{Ph05} Question 8.1 that if one could prove that all such C*-algebras have {\em tracial rank zero} then they would be classified by their Elliott invariant, though whether they have tracial rank zero or not is still unresolved.

In this paper we study the action of a finite symmetry group $G$ on the C*-algebras $\Aw$ and $AF_\omega$ under the assumption that $G$ acts freely on $\p$. This situation is far from rare -- indeed, substitutions are typically presented on a finite set of {\em standard position} prototiles and extended by symmetry. Sections 2--4 are background material on tilings, groupoids, and their C*-algebras. Among the background in Section \ref{tilingalgebra} we show  that $\Aw$ is finitely generated (Proposition \ref{Awfinitelygenerated}). In Section \ref{crossedproductsection} we prove that that the crossed product $\Aw\rtimes G$ is isomorphic to the C*-algebra of an almost AF Cantor groupoid, and hence $\Aw\rtimes G$ has real rank zero, stable rank one, and the order on $K_0(\Aw\rtimes G)$ is determined by traces (Theorem \ref{RpxGalmostAF}). In Section \ref{tracesection} we show that $\Aw\rtimes G$ has a unique trace (Corollary \ref{uniquetracecor}), and show in Remark \ref{connesremark} that the AF algebra that Connes associated to the Penrose tilings is isomorphic to $AF_\omega\rtimes D_{10}$ where $D_{10}$ is the dihedral group $D_{10}\subset O(2,\re)$ generated by $r$ (the counterclockwise rotation by $^\pi\hspace{-0.05cm}/_5$) and $f$ (the reflection over the $y$-axis).  

In Section \ref{RPsection} we study properties of the action of $G$ on $\Aw$. We first prove that the action of $G$ on $\Aw$ has the {\em weak Rokhlin property} of Matui and Sato \cite{MaSa12}. We also note that if we assume that $\Aw$ has tracial rank zero, then the action of $G$ on $\Aw$ satisfies the {\em tracial Rokhlin property} of Phillips, see \cite{Ph11}. These results both imply that if one could prove that $\Aw$ has tracial rank zero, then this would imply that $\Aw\rtimes G$ would also have tracial rank zero, and so would be classified by its Elliott invariant. 

\section{Tilings}

A {\em tile} is a subset of $\rd$ homeomorphic to the closed unit ball. A {\em partial tiling} is a collection of tiles whose interiors are pairwise disjoint. A finite partial tiling will be called a {\em patch}. The {\em support} of a partial tiling $P$ is the union of its tiles and is denoted supp$(P)$. We define a {\em tiling} to be a partial tiling whose support is $\rd$. Given $U\subset\rd$ and a partial tiling $T$, $T(U)$ is all the tiles that intersect $U$, that is, $T(U) = \{t\in T \mid t\cap U\neq \emptyset\}$. For $x\in\rd$, $T(\{x\})$ is abbreviated $T(x)$. Two partial tilings $T$ and $T'$ are said to {\em agree on $U$} if $T(U) = T'(U)$. A partial tiling $P$ is called {\em connected} if Int(supp$(P)$) is connected.

Given a vector $x\in\rd$ we can take any subset $U\subset \rd$ and form its translate by $x$, namely $U+x = \{ u+x \mid u\in U\}$. Thus, given a tiling $T$ we can form another tiling by translating every tile by $x$. We denote the new tiling by $T+x = \{t+x \mid t\in T\}$. A tiling for which $T+x = T$ for some non-zero $x\in\rd$ is called {\em periodic}. A tiling for which no such non-zero vector exists is called {\em aperiodic}.

In this paper we deal with substitution tilings. Let $\p = \{p_1, p_2, \dots, p_{\Npro}\}$ be a finite set of (possibly labeled) tiles which we call the set of {\em prototiles}. The prototiles will typically be polytopes, but we assume at minimum that their boundaries have {\em capacity} or {\em box-counting dimension} strictly less than $d$, that is, there exist $n_{\text{cap}}\in\NN$ strictly less than $d$, a constant $K_{\text{cap}}$, and a function $f_{\text{cap}}(\eps) \leq K_{\text{cap}}\eps^{-n_{\text{cap}}}$ such that we can cover the boundary of any prototile by $f_{\text{cap}}(\eps)$ balls of radius $\eps$. By giving the prototiles labels, we may assume that if $p,q\in\p$ and $p+x = q$ then $x=0$. 

Let $\p^*$ be the set of all possible partial tilings consisting of translates of elements of $\p$. A {\em substitution rule} is a function $\omega: \p\to \p^*$ such that there exists $\lambda >1$ such that supp$(\omega(p)) = \lambda{p}$ for all $p\in\p$. We can extend the definition of $\omega$ to tiles which are translates of prototiles by setting $\omega(p+x) = \omega(p) + \lambda x$ for $p\in\p$, $x\in\rd$, and it is not hard to see that this extends $\omega$ to a map from $\p^*$ to $\p^*$. The pair $(\p,\omega)$ is called a {\em substitution tiling system}.

We let $\Omega$ be the set of all tilings $T$ in $\p^*$ such that if $P$ is a patch in $T$ then there exists $x\in\rd$, $p\in\p$ and $n\in\NN$ such that $P\subset \omega^n(p) + x$. It is not hard to show that the set $\Omega$ is nonempty and $\omega(\Omega) = \Omega$ (see for example \cite{AP98}, Propositions 2.1 and 2.2). We make the following assumptions on $(\p,\omega)$:
\begin{assumption}\label{primitive} The substitution tiling system $(\p,\omega)$ is {\em primitive}, that is, there exists $N\in\NN$ such that for all $p, q\in \p$, $\omega^n(p)$ contains a translate of $q$.
\end{assumption}
Primitivity allows construction of a specific type of tiling in $\Omega$ which will be important to us. For $p\in\p$, primitivity allows us to find $n\in \NN$ such that $p+x\in\omega^n(p)$ and $p+x$ is contained in the interior of supp$(\omega^n(p))$. The map from supp$(\omega^n(p))$ to $p+x$ given by $y \mapsto \lambda^{-n}y + x$ is onto and contractive, and hence has a unique fixed point $y_0$, that is, $y_0 = \lambda^{-n}y_0 + x$. If we set $t = p - y_0 + x$, one then checks that $t\in \omega^n(t)$, and so $T = \cup_k \omega^{kn}(t)$ will be a tiling with $\omega^n(T) = T$ and $T(0)$ is a single tile.

\begin{assumption}\label{FLC}
Every element $T\in\Omega$ has {\em finite local complexity}, that is, for every $R>0$ the number of patches $P\subset T$ such that the diameter of supp$(P)$ is less than $R$ is finite modulo translation.
\end{assumption}
\begin{assumption}\label{injectivesub} The map $\omega:\Omega\to\Omega$ is injective.
\end{assumption}
Under these assumptions, every tiling in $\Omega$ is aperiodic (see for example \cite{AP98}, Proposition 2.3) and $\omega$ has an inverse $\omega^{-1}$ such that $\omega^{-1}(T+x) = \omega^{-1}(T) + \lambda^{-1}x$. 
\begin{assumption}\label{forcesborder}
The substitution system $(\p,\omega)$ {\em forces its border}, that is, there exists an $n\in\NN$ such that for all $p\in \mathcal{P}$ if we have that whenever $\omega^n(p) + x \in T$ and $\omega^n(p) + x'\in T'$ then we can conclude that 
\[
T\left(\supp(\omega^n(p) + x)\right) - x = T'\left(\supp(\omega^n(p) + x')\right) - x'.
\]
\end{assumption}
We define a metric on $\Omega$ under which two tilings will be close if they agree on a large ball around the origin up to a small translation. For $T,T'\in\Omega$ we let
\begin{eqnarray*}
d(T,T') & = &  \inf\{1,\varepsilon \mid \exists\ x, x'\in\rd \ni \left\|x\right\|, \left\|x'\right\| < \varepsilon,\\
        &   & \hspace{1cm}(T - x)(B_{1/\varepsilon}(0)) = (T'-x')(B_{1/\varepsilon}(0))\}.
\end{eqnarray*} 
This is called the {\em tiling metric}. The space $\Omega$ equipped with this metric is called the {\em continuous hull}. Finite local complexity implies that $\Omega$ is compact under this metric, and taken together Assumptions \ref{primitive}--\ref{injectivesub} imply that $\omega: \Omega \to \Omega$ is a homeomorphism and that for all $T\in\Omega$ then the set $T+\rd$ is dense in $\Omega$. 

We now define a subspace of $\Omega$ which is important from the perspective of C*-algebras. We insist (without loss of generality) that each prototile contains the origin in its interior. If $T\in\Omega$ and $t\in T$, then $t = p+ x$ for some $p\in\p$ and $x\in\rd$ and the $p$ and $x$ are unique. We define the {\em puncture} of the tile $t$ to be $x$, and denote this point as $\x(t)$. If $P$ is a partial tiling, then we let $\x(P) = \{\x(t)\mid t\in p\}$ denote the set of all punctures of tiles in $P$. We let
\[
\op = \{ T\in \Omega \mid 0\in \x(T)\}.
\]
Then $\op$ is the set of all tilings in $\Omega$ which contain a tile whose puncture is the origin. This space is called the {\em punctured hull} or {\em transversal}. Given Assumptions \ref{primitive}--\ref{injectivesub}, the space $\op$ is compact, totally disconnected, and has no isolated points, and hence is homeomorphic to the Cantor set (see for example \cite{KP00}, p. 187). For a patch $P$ in some tiling in $\Omega$ and $t\in P$ the set
\[
U(P,t) = \{ T\in\op\mid P - \x(t)\subset T\}
\]
is clopen in $\op$. We note that for $x\in \rd$ the sets $U(P,t)$ and $U(P+x,t+x)$ are identical. As $P$ and $t$ vary, the sets $U(P,t)$ form a clopen basis for the topology on $\op$.

This paper concerns finite symmetries on tilings, and so we now define what symmetries we will consider.

\begin{defn}Let $(\p,\omega)$ be a substitution tiling system. We say that a group $G$ is a {\em symmetry group} for $(\p,\omega)$ if 
\begin{enumerate}
\item $G$ is a subgroup of $O(d,\re)$, the orthogonal group on $\rd$,
\item If $p\in\p$ and $g\in G$, then $gp = \{gx\mid x\in p\}$ is an element of $\p$ (ie, $G$ acts on $\p$ from the left), and
\item If $p\in\p$, then $\omega(gp) = g\omega(p)$. 
\end{enumerate}
If $G$ is such a group, then we say that $\mathcal{S}_G\subset\mathcal{P}$ is a set of {\em standard position} prototiles for $G$ if $G\mathcal{S}_G = \mathcal{P}$ and $\mathcal{S}_G$ does not properly contain any other such set.
\end{defn}
In this work, we will focus on the case where $G$ is a finite group.

\begin{example}\label{penrosesubex}
Figure \ref{penrosesub} below illustrates a substitution on a set of prototiles 
$$\ppen = \{\textbf{1}, \textbf{2}, \dots, \textbf{40}\}.$$ 
This is the substitution given in \cite{AP98}, Example 10.4. Only four prototiles are shown on the left; tiles {\bf 1} and {\bf 11} are congruent but are given different labels and substituted differently -- similarly for tiles {\bf 21} and {\bf 31}. If we let $r$ denote the counterclockwise rotation of $\re^2$ by $^\pi\hspace{-0.05cm}/_5$ and $f$ be the reflection over the $y$-axis, then the other tiles are given by $\textbf{2} = r\textbf{1}$, $\textbf{12} = r\textbf{11}$, and so on. We also have that $f\textbf{1} = \textbf{11}$ and $f\textbf{21} = \textbf{31}$. It is easy to check that this substitution is primitive and has finite local complexity. 

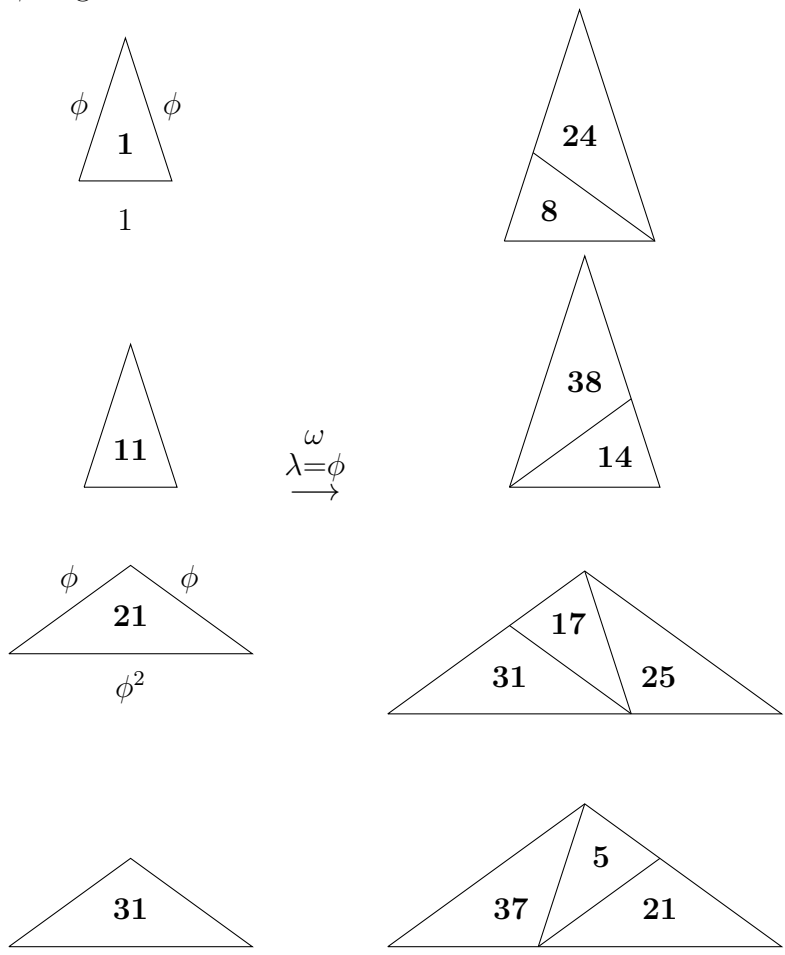
\begin{figure}
\begin{center}
\begin{tabular}{ccc}
Prototiles & & \\
(+ rotates by $^\pi\hspace{-0.05cm}/_5$) & & \\
$\phi =$ golden ratio & & \\
\begin{tikzpicture}[scale=2]
\draw (0,0.5) node {$\phi$};
\draw (0.618,0.5) node {$\phi$};
\draw (0.309,-0.4) node[anchor=south] {1};
\draw (0.309,0.25) node {\textbf{1}};
\draw (0,0) -- (0.618,0) -- (0.309,0.9510)-- (0,0);
\end{tikzpicture}
& & 
\begin{tikzpicture}[scale=2]
\draw (0,0) -- (1,0) -- (0.5,1.5388)-- (0,0);
\draw (1,0) -- (0.191, 0.588);
\draw (0.3, .2) node {\textbf{8}};
\draw (0.5, 0.7) node {\textbf{24}};
\end{tikzpicture}
\\
\vspace{0.2cm}
\begin{tikzpicture}[scale=2]
\draw (0,0) -- (0.618,0) -- (0.309,0.9510)-- (0,0);
\draw (0.309,0.25) node {\textbf{11}};
\end{tikzpicture}
& \Large $\stackrel{\omega}{\substack{\lambda = \phi\\ \longrightarrow}}$  &
\vspace{0.5cm}
\begin{tikzpicture}[scale=2]
\draw (0,0) -- (1,0) -- (0.5,1.5388)-- (0,0);
\draw (0,0) -- (0.809, 0.588);
\draw (0.7, .2) node {\textbf{14}};
\draw (0.5, 0.7) node {\textbf{38}};
\end{tikzpicture}
\\
\vspace{0.5cm}
\begin{tikzpicture}[scale=2]
\draw (0.4, 0.5) node {$\phi$};
\draw (1.2, 0.5) node {$\phi$};
\draw (0.809, -0.4) node[anchor=south] {$\phi^2$};
\draw (0,0) -- (1.618,0) -- (0.809,0.5877)-- (0,0);
\draw (0.809,0.25) node {\textbf{21}};
\end{tikzpicture}
& & 
\vspace{0.5cm}
\begin{tikzpicture}[scale=2]
\draw (0,0) -- (2.618,0) -- (1.309,0.9501)-- (0,0);
\draw (1.618,0) -- (0.809,0.5877);
\draw (1.618,0) -- (1.309,0.9501);
\draw (0.809,0.25) node {\textbf{31}};
\draw (1.8,0.25) node {\textbf{25}};
\draw (1.2,0.6) node {\textbf{17}};
\end{tikzpicture}
\\
\vspace{0.2cm}
\begin{tikzpicture}[scale=2]
\draw (0,0) -- (1.618,0) -- (0.809,0.5877)-- (0,0);
\draw (0.809,0.25) node {\textbf{31}};
\end{tikzpicture}
& &
\vspace{0.5cm}
\begin{tikzpicture}[scale=2]
\draw (0,0) -- (2.618,0) -- (1.309,0.9501)-- (0,0);
\draw (1,0) -- (1.809,0.5877);
\draw (1,0) -- (1.309,0.9501);
\draw (1.809,0.25) node {\textbf{21}};
\draw (0.818,0.25) node {\textbf{37}};
\draw (1.418,0.6) node {\textbf{5}};
\end{tikzpicture}
\end{tabular}
\caption{The Penrose substitution}\label{penrosesub}
\end{center}
\end{figure}
In the case of the Penrose tiling above, we can take $G$ to be the dihedral group $D_{10}$ generated by $r$ (the counterclockwise rotation by $^\pi\hspace{-0.05cm}/_5$) and $f$ (the reflection over the $x$-axis). These elements satisfy the relations
\[
r^{10} = f^2 = e, \hspace{1cm} frf = r^{-1}.
\]
In this case, we can take $\mathcal S_{D_{10}} = \{ \mathbf{1}, \mathbf{21}\}$. Another feature of this action is that $D_{10}$ acts freely on $\ppen$, that is, if $gp = p$ for some $g\in D_{10}$ and $p\in\ppen$, then $g = e$. We note that for the subgroup $\langle r\rangle$ we have $\mathcal S_{\langle r\rangle} = \{ \mathbf{1}, \mathbf{11}, \mathbf{21}, \mathbf{31}\}$ and the action of $\langle r\rangle$ also free.
\end{example}

If $G$ is a symmetry group for $(\p,\omega)$ and $t= p+x$ for $p\in\p$ and $x\in \rd$ then $gt = gp+gx$ is a tile. It is also clear that if $T$ is a (partial) tiling then $gT = \{gt\mid t\in T\}$ is also a (partial) tiling. 

\begin{proposition}The map $T\mapsto gT$ is a homeomorphism of $\Omega$, and so $\Omega$ is a $G$-space. Since $G$ acts on the prototiles, $\op$ is also a $G$-space.
\end{proposition}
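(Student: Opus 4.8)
The plan is to check four things in turn: that $gT$ again lies in $\Omega$, that $T\mapsto gT$ is an isometry of the tiling metric and hence a homeomorphism, that these maps assemble into a genuine left action, and finally that this action restricts to $\op$. For the first point I would first record that $\omega$ commutes with the $G$-action on all of $\p^*$, not merely on prototiles: for $p\in\p$ and $x\in\rd$ one has $\omega(g(p+x)) = \omega(gp+gx) = \omega(gp)+\lambda gx = g\omega(p)+\lambda gx = g\omega(p+x)$ by axiom (3) of a symmetry group, and extending additively over patches and inducting on $n$ gives $\omega^n(gP) = g\,\omega^n(P)$ for every $P\in\p^*$ and every $n\in\NN$. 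Now suppose $T\in\Omega$ and let $Q\subset gT$ be a patch; then $P:=g^{-1}Q$ is a patch of $T$ (the orthogonal map $g^{-1}$ preserves finiteness and disjointness of interiors), so $P\subset\omega^n(p)+x$ for some $p\in\p$, $n\in\NN$, $x\in\rd$, and applying $g$ yields $Q\subset\omega^n(gp)+gx$ with $gp\in\p$ by axiom (2). Since $gT$ is visibly a tiling by translates of prototiles, this shows $gT\in\Omega$.

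The homeomorphism statement is where the hypothesis $G\subset O(d,\re)$ does its work: each $g$ is a Euclidean isometry fixing the origin, so $\|gx\| = \|x\|$ and $g(B_{1/\eps}(0)) = B_{1/\eps}(0)$. Combining this with the elementary identities $g(T-x) = gT-gx$ and $(gT)(U) = g\big(T(g^{-1}U)\big)$, any pair $x,x'$ witnessing $d(T,T')<\eps$ gives the pair $gx,gx'$ witnessing $d(gT,gT')\le\eps$; hence $d(gT,gT')\le d(T,T')$, and applying the same inequality with $g^{-1}$ in place of $g$ to $gT,gT'$ yields the reverse inequality, so $T\mapsto gT$ is an isometry. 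As $g^{-1}\in G$, the inverse of this map is $T\mapsto g^{-1}T$, so it is a bijective isometry, i.e.\ a homeomorphism.

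Finally, $eT=T$ and $(gh)T = g(hT)$ follow immediately from $g\{t\}=\{gt\}$ and associativity of the matrix action, so $\Omega$ is a $G$-space. For the transversal, recall that every prototile $p\in\p$ satisfies $\x(p)=0$ (by the labeling convention that $p+x=q\in\p$ forces $x=0$), so $T\in\op$ precisely when $T$ contains some prototile; since $g$ carries that prototile to $gp\in\p$, which again has puncture $0$, we get $gT\in\op$, and the restricted maps are homeomorphisms of $\op$ for the same reason as above. I do not expect a genuine obstacle here: the only points demanding care are the bookkeeping with punctures and translations and the verification that the substitution really commutes with the $G$-action on all of $\p^*$ and not just on $\p$; everything else is a direct unwinding of the definitions.
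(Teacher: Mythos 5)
Your proposal is correct and follows essentially the same route as the paper, whose entire proof is the one-line observation that $d(gT_1,gT_2)=d(T_1,T_2)$ for $g\in G$, i.e.\ the metric-isometry argument you give in your second paragraph. The additional verifications you supply (that $gT\in\Omega$ via $\omega^n(gP)=g\,\omega^n(P)$, and that punctures of prototiles are preserved so the action restricts to $\op$) are exactly the ``straightforward checks'' the paper leaves implicit, and they are carried out correctly.
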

\begin{proof}
It is straightforward to check that for $g\in G$ and $T_1, T_2\in\Omega$, we have that $d(gT_1, gT_2) = d(T_1, T_2)$.
\end{proof}

\section{Groupoids and C*-algebras}
In this section we set terminology and notation for topological groupoids and their C*-algebras. We also prove a result we need (Proposition \ref{semidirectcrossedproduct}) which seems to be well-known but for which we cannot locate a reference.

We will assume the theory of topological groupoids from \cite{R80}, with the exception that for a groupoid $\g$ the range and source maps $\g\to\g^{(0)}$ are denoted $r$ and $s$ respectively. We also use the word {\em subgroupoid} to mean a subspace of $\g$ which is closed under the groupoid operations and which has the same unit space as $\g$. A groupoid $\g$ is called {\em \'etale} if it is locally compact, Hausdorff, second countable, and $r$ and $s$ are local homeomorphisms. We note that this implies that $\g^{(0)}$ is open in $\g$. For $x,y\in\g^{(0)}$ denote
\[
\g^{x} = r^{-1}(x), \hspace{0.5cm}\g_{x} = s^{-1}(x), \hspace{0.5cm}\g_y^x = \g^x\cap\g_y,
\] 
and recall that $\g_x$ and $\g^x$ are discrete subspaces of $\g$. For $x\in\g^{(0)}$, $\g^x_x$ is a group, called the {\em isotropy group} at $x$. The groupoid $\g$ is called {\em principal} if for each $x,y\in\g^{(0)}$, there is at most one $\gamma \in \g$ such that $r(\gamma) = x$ and $s(\gamma) = y$. For $x\in\g^{(0)}$, the {\em orbit} of $x$ is the set $r(s^{-1}(x)) = s(r^{-1}(x))\subset \g^{(0)}$. A set $S\subset \g$ is a {\em graph} in $\g$ if the restrictions of $r$ and $s$ to $S$ are injective. Equivalently, $S$ is a graph if and only if $SS^{-1}$ and $S^{-1}S$ are both subsets of $\g^{(0)}$. For $E\subset \g^{(0)}$, we say that $E$ is {\em invariant} if $g\in\g$ and $s(g)\in E$ implies that $r(g)\in E$.

An \'etale groupoid $\g$ is called a {\em Cantor groupoid} if $\g^{(0)}$ is homeomorphic to the Cantor set. This is a definition of Phillips (\cite{Ph05}, Definition 1.1) and describes many of the groupoids associated to tilings.

An automorphism of a topological groupoid $\g$ is a self-homeomorphism which respects the groupoid structure on $\g$, and we denote the set of all automorphisms on $\g$ by Aut$(\g)$. Let $G$ be a group, and let $\alpha : G\to \textnormal{Aut}(\g)$ be a homomorphism. We write $$\gamma\cdot g = \alpha_{g^{-1}}(\gamma)$$ for $g\in G$ and $\gamma\in \g$ and note that this defines a right action of $G$ on $\g$. Renault (\cite{R80}, Definition I.1.7) defines the {\em semidirect product} $\g \rtimes_\alpha G$ as the groupoid $\g \times G$ where 
\begin{enumerate}
\item $(\gamma,g)$ and $(\xi,h)$ are composable if and only if $\xi = \eta\cdot g$ with $\gamma$ and $\eta$ composable,
\item $(\gamma,g)(\eta\cdot g, h) = (\gamma\eta,gh)$, and
\item $(\gamma,g)^{-1} = (\gamma^{-1}\cdot g, g^{-1})$.
\end{enumerate}
In this case, $r(\gamma,g) = (r(\gamma),e)$ and $s(\gamma,g) = (s(\gamma)\cdot g, e)$. In light of this, the unit space of $\g \rtimes_\alpha G$ may be identified with the unit space of $\g$. We may also drop the action $\alpha$ and simply write $\g\rtimes G$.

We will be concerned with the semidirect product of $r$-discrete groupoids by finite groups. The following proposition is slightly more general.

\begin{proposition}\label{semidirectrdiscrete}
Let $\g$ be an \'etale groupoid, let $G$ be a countable discrete group, and let $\alpha: G\to \textnormal{Aut}(\g)$ be a homomorphism. Then the semidirect product $\g\rtimes_\alpha G$ when given the product topology from $\g\times G$ is \'etale. If $\g$ is a Cantor groupoid, then so is $\g\rtimes G$.
\end{proposition}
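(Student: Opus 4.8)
The plan is to verify directly that $\g \rtimes_\alpha G$, equipped with the product topology from $\g \times G$, satisfies each clause of the definition of étale groupoid: local compactness, Hausdorffness, second countability, and that the range and source maps are local homeomorphisms. Since $G$ is countable discrete and $\g$ is étale (hence locally compact, Hausdorff, second countable), the product $\g \times G$ inherits all three point-set properties automatically: a countable disjoint union of copies of $\g$ is locally compact Hausdorff and second countable. These properties pass to the groupoid $\g \rtimes_\alpha G$ since as a topological space it \emph{is} $\g \times G$. So the only real content is the statement about $r$ and $s$.

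**Next I would** examine the range and source maps. Using the formulas recorded just before the proposition, $r(\gamma, g) = (r(\gamma), e)$ and $s(\gamma, g) = (s(\gamma)\cdot g, e)$, where we identify the unit space with $\g^{(0)}$. The range map factors as $(\gamma, g) \mapsto \gamma \mapsto r(\gamma)$, i.e., projection onto the first coordinate followed by the range map of $\g$; since each $\alpha_g$ is a homeomorphism, restricted to a set of the form $U \times \{g\}$ with $U$ open in $\g$ small enough that $r|_U$ is a homeomorphism onto its (open) image, $r$ is a homeomorphism onto $r(U) \subset \g^{(0)}$, which is open because $\g^{(0)}$ is open in $\g$. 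Thus $r$ is a local homeomorphism. For the source map, on $U \times \{g\}$ we have $s(\gamma, g) = s(\gamma)\cdot g = \alpha_{g^{-1}}(s(\gamma))$; this is the composition of $s|_U$ (a homeomorphism onto an open subset of $\g^{(0)}$ for $U$ small enough) with the homeomorphism $\alpha_{g^{-1}}$, hence again a homeomorphism onto an open set. So $s$ is also a local homeomorphism, and $\g \rtimes_\alpha G$ is étale.

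**Finally**, for the Cantor groupoid claim: if $\g$ is a Cantor groupoid then $\g^{(0)}$ is homeomorphic to the Cantor set, and since the unit space of $\g \rtimes_\alpha G$ is identified (as a topological space) with $\g^{(0)}$, it too is homeomorphic to the Cantor set, so $\g \rtimes_\alpha G$ is a Cantor groupoid by definition. I do not expect any genuine obstacle here — the proposition is essentially a bookkeeping exercise — so the proof will be short. The one point that deserves a sentence of care is checking that the product topology genuinely makes composition and inversion continuous (so that $\g \rtimes_\alpha G$ is a topological groupoid at all); this follows from continuity of the groupoid operations on $\g$ together with continuity of each $\alpha_g$ and the discreteness of $G$, which makes the map $(\gamma, g) \mapsto \gamma \cdot g = \alpha_{g^{-1}}(\gamma)$ continuous on each clopen slice $\g \times \{g\}$.
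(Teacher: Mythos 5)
Your proof is correct and follows essentially the same route as the paper: the point-set properties pass through the product topology, and the range map is a local homeomorphism on slices $U\times\{g\}$. You are slightly more thorough than the paper's own proof, which checks only the range map and leaves the source map, the continuity of the operations, and the Cantor-groupoid claim implicit; your extra observations (in particular that $s(\gamma,g)=\alpha_{g^{-1}}(s(\gamma))$ is $s|_U$ composed with a homeomorphism) are all accurate.
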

\begin{proof}
We recall that the unit space of $\g\rtimes_\alpha G$ is also $\g^{(0)}$ and that $r(\gamma, g) = r(\gamma)$ for all $\gamma\in\g$ and $g\in G$. The groupoid $\g\rtimes_\alpha G$ is given the product topology, so it is locally compact, Hausdorff and second countable. Let $(\gamma, g)\in \g\rtimes_\alpha G$, and find a neighbourhood $U$ of $\gamma$ in $\g$ such that $r|_U: U\to r(U)$ is a homeomorphism. Then $U\times \{g\}$ is open in $\g\rtimes_\alpha G$ and $r(U\times \{g\}) = r(U)$, and so $r$ is a local homeomorphism.
\end{proof}
A Cantor groupoid $\g$ is called {\em approximately finite} (AF for short), if it is an increasing union of a sequence of compact open principal Cantor subgroupoids, each of which contains the unit space $\g^{(0)}$. A groupoid which is AF in this sense is AF in the sense of Renault (\cite{R80}, Definition III.1.1), and a groupoid is AF in the sense of Renault if and only if it is AF in the above sense and its unit space is compact with no isolated points (\cite{Ph05}, Proposition 1.16).

\begin{defn}\label{thin}(\cite{Ph05}, Definition 2.1)
Let $\g$ be a Cantor groupoid and let $K\subset \g^{(0)}$ be a compact subset. Then $K$ is called {\em thin} for $\g$ if for every $n$ there exist compact graphs $S_1, S_2, \dots, S_n\subset \g$ such that $s(S_k) = K$ and the sets $r(S_1), r(S_2), \dots, r(S_n)$ are pairwise disjoint.
\end{defn}
Before stating the following definition of Phillips, we recall that a measure $\mu$ on $\g^{(0)}$ is called {\em $\g$-invariant} if $\mu(r(S)) = \mu(s(S))$ for every open graph $S$.
\begin{defn}\label{almostaf}(\cite{Ph05}, Definition 2.2)
Let $\g$ be a Cantor groupoid. Then $\g$ is called an {\em almost AF Cantor groupoid} if we have the following:
\begin{enumerate}
\item There exists an open AF subgroupoid $\g_0\subset \g$ which contains the unit space such that whenever $K$ is a compact subset of $\g \setminus \g_0$, we have that $s(K)$ is thin for $\g_0$ in the sense of Definition \ref{thin}.
\item For every closed invariant subset $E\subset\g^{(0)}$, and every nonempty relatively open subset $U\subset E$, there is a $\g$-invariant Borel probability measure $\mu$ on $\g^{(0)}$ such that $\mu(U)>0$
\end{enumerate}
\end{defn}
A locally compact Hausdorff groupoid $\g$ is {\em essentially principal} if for every invariant closed subset $F$ of its unit space, the set of $x\in F$ for which $\g^x_x =\{x\}$ is dense in $F$. It is a fact that almost AF Cantor groupoids are essentially principal (\cite{Ph05}, Lemma 2.6).

Below we will associate Cantor groupoids to tilings, and the C*-algebras of a tiling will be the C*-algebras associated to these groupoids. By now the construction of a C*-algebra from an \'etale groupoid is quite well-known, but we will briefly describe it here.

Let $\g$ be an \'etale groupoid and consider the linear space $C_c(\g)$, the continuous compactly-supported functions from $\g$ to $\mathbb{C}$. For $f,g\in C_c(\g)$, the formulas
\[
fg (\gamma) := \sum_{\eta\in\mathscr G\atop r(\eta)=s(\gamma)} f(\gamma\eta)g(\eta^{-1}) \hspace{1cm} f^*(\gamma) := \overline{f(\gamma^{-1})}
\]
define a product and involution on $C_c(\g)$. The C*-algebra $C^*(\g)$ is the completion of this $*$-algebra in a suitable norm. We will work with the reduced C*-algebra. For $x\in\g^{(0)}$, there is a representation $\lambda_x$ of $C_c(\g)$ on $\ell^2(\g_x)$ given by
\[
\lambda_x(f)\xi(\gamma) = \sum_{\eta\in\mathscr G \atop s(\eta) = x}f(\gamma\eta^{-1})\xi(\eta).
\]
Then
\[
\|f\|_{\textnormal{red}} := \sup_{x}\{\|\lambda_x(f)\|\}
\]
defines a norm on $C_c(\g)$ which satisfies the C*-condition. The completion of $C_c(\g)$ under this norm is denoted $C^*_r(\g)$. For $f\in C_c(\g)$, let
\begin{equation}\label{rsnorms}
\|f\|_r = \sup_{u\in\mathscr G^{(0)}} \left\{\sum_{r(\gamma) = u}|f(\gamma)|\right\},\hspace{0.5cm} \|f\|_s = \sup_{u\in\mathscr G^{(0)}} \left\{\sum_{s(\gamma) = u}|f(\gamma)|\right\},
\end{equation}
\begin{equation}\label{Inorm}
\|f\|_I = \max\{\|f\|_r, \|f\|_s\}.
\end{equation}
If $f\in C_c(\mathscr G)$, then 
\[
\|f\|_\infty \leq \|f\|_{\textnormal{red}} \leq \|f\|_I.
\]
If $\g$ is a Cantor groupoid and $U\subset \g^{(0)}$ is clopen then $\chi_U$, the characteristic function on $U$, is a projection in $C_c(\g)$ satisfying, for $\gamma\in\g$ and $f\in C_c(\g)$,
\begin{equation}
(f\chi_U)(\gamma) = \begin{cases}f(\gamma)& s(\gamma)\in U\\ 0 & s(\gamma)\notin U\end{cases} \hspace{0.3cm}\text{and}\hspace{0.3cm} (\chi_Uf)(\gamma) = \begin{cases}f(\gamma)& r(\gamma)\in U\\ 0 & r(\gamma)\notin U\end{cases}.
\end{equation}

In defining Cantor groupoids in \cite{Ph05}, Phillips notes the following: a Cantor groupoid $\g$ is an almost AF Cantor groupoid if it satisfies Condition 1 of Definition \ref{almostaf} and either $C^*_r(\g)$ or $C^*_r(\g_0)$ is simple. By \cite{R80}, Proposition II.4.6, if $\g$ is an essentially principal \'etale groupoid then $C^*_r(\mathscr G)$ is simple if the only open invariant subsets of $\mathscr G^{(0)}$ are $\mathscr G^{(0)}$ and the empty set.

In the following theorem we record the results of Phillips \cite{Ph05} concerning almost AF Cantor groupoids.
\begin{theorem}\label{phillipslist}
Let $\g$ be an almost AF Cantor groupoid with respect to the AF groupoid $\g_0\subset \g$, and suppose that $C^*_r(\g)$ is simple. Then

\begin{enumerate}
\item $C^*_r(\g)$ has real rank zero,
\item $C^*_r(\g)$ has stable rank one,
\item The order on $K_0(C^*_r(\g))$ is determined by traces, and
\item The space of normalized traces on $C^*_r(\g_0)$ is in one-to-one correspondence with the space of normalized traces on $C^*_r(\g)$.
\end{enumerate}
\end{theorem}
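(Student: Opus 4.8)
This theorem collects results of Phillips \cite{Ph05}, so the plan is to follow that paper; here I only sketch the shape of the argument. The point of departure is that $\g_0$ is an AF groupoid, so $C^*_r(\g_0)$ is an AF algebra and therefore \emph{already} has real rank zero and stable rank one, its $K_0$-order is determined by its dimension functions (equivalently, by its traces), and each of its normalized traces corresponds to a $\g_0$-invariant Borel probability measure on $\g^{(0)}$. The whole task is to transport these facts across the inclusion $C^*_r(\g_0)\subset C^*_r(\g)$, and the only mechanism available for this is Condition 1 of Definition \ref{almostaf}: every compact subset of $\g\setminus\g_0$ has source that is thin for $\g_0$.

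First I would isolate the basic approximation step. Given $a\in C^*_r(\g)$ and $\eps>0$, approximate $a$ within $\eps$ in $\|\cdot\|_{\textnormal{red}}$ by some $b\in C_c(\g)$ and, using that $\g_0$ is open, write $b=b_0+b_1$ with $b_0\in C_c(\g_0)$ and $b_1\in C_c(\g\setminus\g_0)$. Put $K=s(\supp(b_1))$; by Condition 1 this compact set is thin for $\g_0$, so for each $n$ there are compact $\g_0$-graphs $S_1,\dots,S_n$ with $s(S_k)=K$ and $r(S_1),\dots,r(S_n)$ pairwise disjoint. The elements $v_k=\chi_{S_k}\in C_c(\g_0)$ are partial isometries with $v_k^*v_k=\chi_K$ and mutually orthogonal ranges $v_kv_k^*$, i.e. a Rokhlin-type tower inside $C^*_r(\g_0)$ with base $\chi_K$. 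Such towers let one cut the contribution of $b_1$ down into a corner of $C^*_r(\g)$ cut off by a projection of $C^*_r(\g_0)$ that can be taken arbitrarily small; this is precisely the sense in which $C^*_r(\g_0)$ is a large subalgebra of $C^*_r(\g)$.

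Granting this, the four conclusions come out as follows. For real rank zero, approximate a self-adjoint $a$ by $b_0+b_1$, perturb $b_0$ inside the AF algebra $C^*_r(\g_0)$ to a self-adjoint element with finite spectrum, and use a tower as above to absorb $b_1$ into a corner too small to affect that spectrum; stable rank one is analogous, producing invertibles. For the order on $K_0$ the content is strict comparison: if $p,q$ are projections with $\tau(p)<\tau(q)$ for every normalized trace $\tau$, then $p$ is Murray--von Neumann subequivalent to $q$. Using real rank zero one replaces $p,q$ by projections in $C^*_r(\g_0)$, compares them there via the fact that dimension functions on the AF algebra $C^*_r(\g_0)$ are exactly its traces, and uses thinness once more to push the small leftover piece into a subprojection of $q$. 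Finally, for traces, restriction gives a map from normalized traces on $C^*_r(\g)$ to normalized traces on $C^*_r(\g_0)$. Injectivity holds because $\g$ is essentially principal, so a trace on $C^*_r(\g)$ vanishes on the off-diagonal part of $C_c(\g)$ and hence factors through the canonical conditional expectation onto $C(\g^{(0)})$, being thereby determined by the measure it induces on $\g^{(0)}$; surjectivity holds because in this setting a $\g_0$-invariant probability measure is automatically $\g$-invariant (for an open graph $S\subset\g$, the part of $S$ outside $\g_0$ has thin source \emph{and} thin range, forcing both to have measure zero, while the part inside $\g_0$ is handled by $\g_0$-invariance), so integration against it, composed with the conditional expectation, is a trace on $C^*_r(\g)$ restricting to the given one, and Condition 2 ensures the supply of such measures is nondegenerate.

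The main obstacle is the strict comparison statement behind conclusion (3): converting the purely qualitative thinness of $s(K)$ into an honest subequivalence of projections requires careful control of how the Rokhlin towers in $C^*_r(\g_0)$ sit relative to the relevant $K_0$-classes, and it is here, together with the surjectivity half of (4), that simplicity of $C^*_r(\g)$ and Condition 2 are genuinely used. All of this is carried out in detail in \cite{Ph05}, so for the purposes of this paper it suffices to verify that the tiling-related groupoids we consider are almost AF Cantor groupoids and then invoke the present theorem.
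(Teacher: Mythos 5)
Your proposal is correct and ultimately takes the same route as the paper, whose entire proof is the citation ``See \cite{Ph05} Theorem 4.6, Theorem 5.2, Corollary 5.4, and Proposition 2.11.'' Your additional sketch of the machinery behind those results (splitting $C_c(\g)$ elements across the open subgroupoid $\g_0$, Rokhlin-type towers from thinness, strict comparison, and the trace correspondence via invariant measures and essential principality) is a faithful outline of Phillips' arguments, but the paper itself does not reproduce any of it.
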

\begin{proof}
See \cite{Ph05} Theorem 4.6, Theorem 5.2, Corollary 5.4, and Proposition 2.11.
\end{proof}

In our last result of this section, we prove that if $\g$ is an \'etale groupoid acted upon by a countable discrete group $G$, then $C^*_r(\g\rtimes G)$ is isomorphic to $C^*_r(\g)\rtimes_r G$, where the latter is the reduced crossed product. This result seems to be well-known, though we are unable to locate a reference and so include the proof for the sake of completeness. 

Recall (as in \cite{Dav}, for example) that if $A$ is a C*-algebra, $G$ is a countable discrete group and that $\alpha:G\to$ Aut$(A)$ is a homomorphism, then we may form the linear space $AG$ consisting of all finite linear combinations $\sum_{g\in G}a_g\delta_g$ with $a_g\in A$ which becomes a $*$-algebra when given product determined by the formal rule $\delta_ga\delta_{g^{-1}} = \alpha_g(a)$ and $\delta_{g}^* = \delta_{g^{-1}}$. A faithful representation $\rho$ of $A$ into $B(H_\rho)$ induces a faithful reprensentation $\tilde\rho$ of $A$ into $B(\ell^2(G, H_\rho))$ determined by $\tilde\rho(a)f(g) = \rho(\alpha_g^{-1}(a))(f(g))$. If $u$ is the usual left regular representation $u: G\to B(\ell^2(G, H_\rho))$, then there is a faithful representation $\tilde\rho\rtimes u$ of $AG$ into $B(\ell^2(G, H_\rho))$ given by
\[
\tilde\rho\rtimes u \left(\sum_{g\in G}a_g\delta_g\right) = \sum_{g\in G}\tilde\rho(a_g)u_g.
\]
The completion of $AG$ under the norm $\|a\| := \|\tilde\rho\rtimes u(a)\|$ is a C*-algebra norm independent of the faithful representation $\rho$. The reduced crossed product $A\rtimes_\alpha G$ is defined as the completion of $AG$ in this norm.
\begin{proposition}\label{semidirectcrossedproduct}
Let $\g$ be an \'etale groupoid, let $G$ be a countable discrete group, and let $\alpha: G\to \textnormal{Aut}(\g)$ be a homomorphism. Then 
\begin{enumerate}
\item $\alpha$ induces an action $\tilde\alpha: G \to$ Aut$(C^*_r(\g))$ such that for $f\in C_c(\g)$, $\gamma\in \g$ and $g\in G$ we have $\tilde\alpha_g(f)(\gamma) = f\left(\alpha_g^{-1}(\gamma)\right)$, and
\item there is a $*$-isomorphism $\Phi: C^*_r(\g)\rtimes_{\tilde\alpha, r} G\to C^*_r(\g\rtimes_\alpha G)$ such that for $f\in C_c(\g)$, $\gamma\in\g$, and $h,g\in G$ we have
\[
\Phi(f\delta_h)(\gamma,g) = \begin{cases}f(\gamma) & \text{if } g=h\\ 0 & \text{otherwise}.  \end{cases}
\]
\end{enumerate}
\end{proposition}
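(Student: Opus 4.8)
The plan is to verify part (1) first as a routine check, then build $\Phi$ at the level of dense $*$-subalgebras and show it extends to an isomorphism of reduced C*-algebras by realizing both sides concretely on the same Hilbert space. For part (1), given $g \in G$ and $f \in C_c(\g)$, the function $\tilde\alpha_g(f) := f \circ \alpha_g^{-1}$ is again in $C_c(\g)$ since $\alpha_{g^{-1}}$ is a homeomorphism respecting the groupoid operations; one checks directly from the convolution and involution formulas that $\tilde\alpha_g$ is a $*$-homomorphism of $C_c(\g)$, that $\tilde\alpha_g \tilde\alpha_h = \tilde\alpha_{gh}$, and that $\tilde\alpha_g$ is isometric for $\|\cdot\|_I$ and (via the representations $\lambda_x$, using that $\alpha_g$ permutes the fibres $\g_x$) for $\|\cdot\|_{\mathrm{red}}$; hence it extends to an automorphism of $C^*_r(\g)$. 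This gives the action $\tilde\alpha : G \to \mathrm{Aut}(C^*_r(\g))$.

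For part (2), I would first define $\Phi$ on the algebraic crossed product $C_c(\g)G = \{\sum_{h} f_h \delta_h\}$ by the stated formula $\Phi(f\delta_h)(\gamma,g) = f(\gamma)$ if $g = h$ and $0$ otherwise, extended linearly. Note $\Phi(f\delta_h) \in C_c(\g\rtimes_\alpha G)$ since $\g\rtimes_\alpha G$ carries the product topology and $G$ is discrete. The main computation is to check that $\Phi$ is a $*$-homomorphism: one must show $\Phi(f\delta_h)\Phi(f'\delta_{h'}) = \Phi\big((f\delta_h)(f'\delta_{h'})\big) = \Phi\big(f \tilde\alpha_h(f') \, \delta_{hh'}\big)$ and $\Phi(f\delta_h)^* = \Phi(\tilde\alpha_{h^{-1}}(f^*)\,\delta_{h^{-1}})$, by expanding the convolution on $\g\rtimes_\alpha G$ using the composability rule $(\gamma,g)(\eta\cdot g, k) = (\gamma\eta, gk)$ and the identification of the unit space — this is where the bookkeeping between the twisted product $\delta_h a \delta_{h^{-1}} = \tilde\alpha_h(a)$ and the $\alpha_g$-shift in the semidirect product must line up, and it is the part most prone to index errors.

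Next I would show $\Phi$ extends to an isometry of the reduced completions. The clean way is to exhibit compatible faithful regular representations: fix $x \in \g^{(0)}$ and use $\lambda_x$ on $\ell^2(\g_x)$ as a faithful representation $\rho$ of $C^*_r(\g)$; then the induced representation $\tilde\rho \rtimes u$ of $C^*_r(\g)\rtimes_{\tilde\alpha,r} G$ acts on $\ell^2(G, \ell^2(\g_x)) \cong \ell^2(G \times \g_x)$. On the other side, the regular representation of $C^*_r(\g\rtimes_\alpha G)$ at the unit $x$ acts on $\ell^2\big((\g\rtimes_\alpha G)_x\big)$, and the source-fibre of $\g\rtimes_\alpha G$ over $x$ is $\{(\gamma,g) : s(\gamma)\cdot g = x\} = \{(\gamma,g): s(\gamma) = x\cdot g^{-1}\}$, which I would identify $G$-equivariantly with $\bigsqcup_{g\in G} \g_{x\cdot g^{-1}}$, and then with $G \times \g_x$ after transporting each fibre by the homeomorphism $\alpha_g$. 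Under this identification the two representations of $\Phi$'s source and target agree on the dense subalgebra, so $\|\Phi(a)\|_{\mathrm{red}} = \|a\|_{\mathrm{red}}$ for all $a \in C_c(\g)G$; thus $\Phi$ extends to an isometric $*$-homomorphism $C^*_r(\g)\rtimes_{\tilde\alpha,r}G \to C^*_r(\g\rtimes_\alpha G)$. Finally, surjectivity follows because the image contains all $\Phi(f\delta_h)$ with $f \in C_c(\g)$, $h\in G$, and these span a dense $*$-subalgebra of $C_c(\g\rtimes_\alpha G)$ in the inductive-limit topology (hence dense in $C^*_r(\g\rtimes_\alpha G)$), since every element of $C_c(\g\rtimes_\alpha G)$ is a finite sum $\sum_h F(\cdot,h)$ with each $F(\cdot,h) \in C_c(\g)$. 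The main obstacle is the careful equivariant identification of the Hilbert spaces $\ell^2((\g\rtimes_\alpha G)_x)$ and $\ell^2(G, \ell^2(\g_x))$ intertwining both the convolution action and the $G$-shift; once that bijection is set up correctly, everything else is routine.
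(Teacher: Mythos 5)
Your proposal is correct and follows essentially the same route as the paper: define $\Phi$ on the algebraic crossed product $C_c(\g)G$, verify it is a $*$-isomorphism onto $C_c(\g\rtimes_\alpha G)$, and then intertwine the regular representation of $C^*_r(\g)\rtimes_{\tilde\alpha,r}G$ on $\ell^2(\g_x\times G)$ with $\lambda_x$ on $\ell^2((\g\rtimes_\alpha G)_x)$ via exactly the unitary $\xi\mapsto\xi(\,\cdot\,{\cdot}\,g^{-1},g)$ that the paper uses. The one slip is your claim that a single $\lambda_x$ is a faithful representation of $C^*_r(\g)$ — for a general \'etale groupoid it need not be; as in the paper one takes the direct sums $\bigoplus_{x\in\g^{(0)}}\tilde\pi_x\rtimes u$ and $\bigoplus_{x\in\g^{(0)}}\lambda_x$, which are faithful by the definition of the reduced norms, and the fibrewise intertwining argument goes through unchanged.
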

\begin{proof}
 
It is straightforward to verify that for $g\in G$, $\tilde\alpha_g$ is linear and that for $f_1, f_2\in C_c(\g)$ we have $\tilde\alpha_{g}(f_1f_2) = \tilde\alpha_{g}(f_1)\tilde\alpha_{g}(f_2)$ and $\tilde\alpha_g(f_1^*) = \tilde\alpha_g(f_1)^*$. The $*$-algebra $C_c(\g)$ inherits the norm from $C^*_r(\g)$ and it is straightforward to show that for each $g\in G$, $\tilde\alpha_{g}$ is continuous and so extends to a $*$-automorphism of $C^*_r(\g)$. Furthermore, for $g, h\in G$ and $f\in C_c(\g)$ we have that $\tilde\alpha_{gh}(f_1) = \tilde\alpha_g\circ\tilde\alpha_h(f_1)$, and so $\tilde\alpha$ is a homomorphism.
 
One shows that $\Phi: C_c(\g)G \to C_c(\g\rtimes_\alpha G)$ is an isomorphism of $*$-algebras. To show that $\Phi$ is continuous, let $x\in \g^{(0)} = \g\rtimes_\alpha G^{(0)}$ and recall that the reduced norm on $C^*_r(\g)$ is determined by the representations $\pi_x: C_c(\g) \to B(\ell^2(\g_x))$ given, for $f\in C_c(\g)$, $\xi \in \ell^2(\g_x)$ and $\gamma \in \g_x$ by 
\[
\pi_x(f)\xi(\gamma) = \sum_{\eta\in\g_x} f(\gamma\eta^{-1})\xi(\eta).
\]
The norm on $C^*_r(\g)\rtimes_{\alpha, r} G$ is induced by regular representations on $\ell^2(\g_x\times G)$ arising from the $\pi_x$. The representation $\pi_x$ induces a representation $\tilde\pi_x:C_c(\g) \to B(\ell^2(\g_x\times G))$ given, for $\xi \in \ell^2(\g_x\times G)$ and $(\gamma, g)\in \g_x\times G$ by 
\[
\tilde\pi_x\xi(\gamma,g) =  \sum_{\eta\in\g_x} \alpha_g^{-1}(f)(\gamma\eta^{-1})\xi(\eta, g).
\]
There is also a representation $u: G \to B(\ell^2(\g_x\times G))$ given by $(u_h\xi)(\gamma, g) = \xi(\gamma, h^{-1}g)$, and the norm of $C^*_r(\g)\rtimes_{\alpha, r} G$ is then determined by the representations $\tilde\pi_x \rtimes u: C_c(\g)G \to B(\ell^2(\g_x\times G))$ given, for $\sum_{h\in G} f_h\delta_h\in C_c(\g)G$ by
\[
\tilde\pi_x\rtimes u\left(\sum_{h\in G} f_h\delta_h\right) = \sum_{h\in G}\tilde\pi_x(f_h)u_h.
\]
The norm on $C^*_r(\g\rtimes_\alpha G)$ is determined by the representations $\lambda_x: C_c(\g\rtimes_\alpha G) \to B(\ell^2((\g\rtimes_\alpha G)_x))$ which is given, for $f\in C_c(\g\rtimes_\alpha G)$, $\xi \in \ell^2((\g\rtimes_\alpha G)_x)$ and $(\gamma, g)\in (\g\rtimes_\alpha G)_x$ by 
\[
\lambda_x(f)\xi(\gamma,g) = \sum_{(\eta,t)\in(\g\rtimes_\alpha G)_x} f((\gamma,g)(\eta,t)^{-1})\xi(\eta,t).
\]
There is an isomorphism of Hilbert spaces $\psi: \ell^2((\g\rtimes_\alpha G)_x) \to \ell^2(\g_x\times G)$ which is given, for $\xi \in \ell^2((\g\rtimes_\alpha G)_x)$ and $(\gamma, g)\in \g_x\times G$ by 
\[
\psi(\xi)(\gamma, g) = \xi(\gamma\cdot g^{-1}, g).
\]
This induces a $*$-isomorphism $\Psi: B(\ell^2(\g_x\times G)) \to B(\ell^2((\g\rtimes_\alpha G)_x))$ which is given, for $T\in B(\ell^2(\g_x\times G))$, $\xi\in \ell^2((\g\rtimes_\alpha G)_x)$ and $(\gamma, g)\in \g_x\times G$ by
\[
\Psi(T)(\xi)(\gamma, g) = \psi^{-1}\circ T \circ\psi(\xi)(\gamma, g) = T(\psi(\xi))(\gamma\cdot g, g).
\]
We claim that the diagram
\[
\xymatrix{
C_c(\g)G \ar[r]^{\tilde\pi_x\rtimes u} \ar[d]^{\Phi}&B(\ell^2(\g_x\times G))\ar[d]^{\Psi} \\
C_c(\g\rtimes_\alpha G) \ar[r]^{\lambda_x} & B(\ell^2((\g\rtimes_\alpha G)_x))
}
\]
commutes. 

To see this, first take $f\in C_c(\g)$, $h\in G$, $\xi\in \ell^2((\g\rtimes_\alpha G)_x)$ and $(\gamma, g)\in(\g\rtimes_\alpha G)_x$. We calculate
\[
\lambda_x\big(\Phi(f\delta_h)\big)\xi(\gamma,g) = \sum_{(\nu,t)\in(\g\rtimes_\alpha G)_x} \big(\Phi(f\delta_h)\big)\left((\gamma,g)(\nu,t)^{-1}\right)\xi(\nu,t).
\] 
Using the rules of the semidirect product, one calculates the product $(\gamma,g)(\nu,t)^{-1} = (\gamma(\nu^{-1}\cdot tg^{-1}), gt^{-1})$. A term $\big(\Phi(f\delta_h)\big)(\gamma(\nu^{-1}\cdot tg^{-1}), gt^{-1})$ is only nonzero if $gt^{-1} = h$, and in this case we have $t = h^{-1}g$ and $tg^{-1} = h^{-1}$. Hence we have 
\[
\lambda_x\big(\Phi(f\delta_h)\big)\xi(\gamma,g) = \sum_{(\nu,h^{-1}g)\in(\g\rtimes_\alpha G)_x} f\left(\gamma(\nu^{-1}\cdot h^{-1}\right)\xi(\nu,h^{-1}g).
\] 
We have that $(\nu,h^{-1}g)\in(\g\rtimes_\alpha G)_x$ if and only if $(\nu\cdot h^{-1}, g)\in(\g\rtimes_\alpha G)_x$, and so setting $\eta= \nu\cdot h^{-1}$ we have
\[
\lambda_x\big(\Phi(f\delta_h)\big)\xi(\gamma,g) = \sum_{(\eta,g)\in(\g\rtimes_\alpha G)_x} f\left(\gamma\eta^{-1}\right)\xi(\eta\cdot h,h^{-1}g).
\]
On the other hand, we have
\begin{eqnarray*}
\Psi\left(\tilde\pi_x\rtimes u(f\delta_h)\right)\xi(\gamma, g) &=& \Psi(\tilde\pi_x(f)u_h)\xi(\gamma,g)\\                              
                              &=&\tilde\pi_x(f)u_h(\psi(\xi))(\gamma\cdot g, g)\\
                              &=&\pi_x(\alpha_g^{-1}(f))(u_h(\psi(\xi)))(\gamma\cdot g, g)\\
                              &=&\sum_{\eta\cdot g\in\g_x} \alpha_g^{-1}(f)((\gamma\cdot g)(\eta^{-1}\cdot g))(u_h(\psi(\xi))(\eta\cdot g, g)\\      
                              &=&\sum_{\eta\cdot g\in\g_x} f(\gamma\eta^{-1})u_h(\psi(\xi))(\eta\cdot g, g)\\
                              &=&\sum_{\eta\cdot g\in\g_x} f(\gamma\eta^{-1})\psi(\xi)(\eta\cdot g, h^{-1}g)\\   
                              &=&\sum_{\eta\cdot g\in\g_x} f(\gamma\eta^{-1})\xi(\eta\cdot h, h^{-1}g)\\   
                              &=&\lambda_x\big(\Phi(f\delta_h)\big)\xi(\gamma,g).          
\end{eqnarray*}
Since $\left(\bigoplus_{x\in \g^{(0)}}\tilde\pi_x\right)\rtimes u$ is a faithful representation of  $C^*_r(\g)\rtimes_{\tilde\alpha, r} G$ and $\bigoplus_{x\in \g^{(0)}}\lambda_x$ is a faithful representation of $C^*_r(\g\rtimes_\alpha G)$, we have that $\Phi$ extends to a $*$-isomorphism of $C^*_r(\g)\rtimes_{\tilde\alpha, r} G$ and $C^*_r(\g\rtimes_\alpha G)$.
\end{proof}

\section{Tiling Groupoids and C*-algebras}\label{tilingalgebra}

In this section we summarize facts about groupoids and C*-algebras associated to tilings. Most of the items in this section are well-known (and for a good introductory reference, see \cite{KP00}); we include them here for completeness and later reference. 

Given a substitution tiling system $(\p,\omega)$, the equivalence relation
\[
\rp = \{ (T, T+x)\mid T, T+x\in\op, x\in\rd\},
\]
is an \'etale groupoid. Its unit space $\rp^{(0)} = \op$ is homeomorphic to the Cantor set and so $\rp$ is a Cantor groupoid. Let $P$ be a patch in some tiling in $\Omega$, let $t_1, t_2\in P$, and set
\begin{equation}\label{VPt1t2}
V(P,t_1, t_2) = \{(T,T')\in\op\mid P-\x(t_1)\subset T, P-\x(t_2)\subset T'\}.
\end{equation}
These sets are compact open graphs, and it is easily checked that $r(V(P,t_1,t_2)) = U(P,t_1)$ and $s(V(P,t_1,t_2)) = U(P, t_2)$. The collection of such sets generate the topology on $\rp$.

There is a natural AF subgroupoid of $\rp$. If $t$ is a tile and $n\in\NN$, then we call $\omega^n(t)$ an {\em $n$th-order supertile}. Invertibility of $\omega:\Omega\to\Omega$ implies that for each $n\in\NN$, every tiling $T\in\Omega$ has a unique decomposition into $n$th-order supertiles, and that these decompositions are nested. For each $n\in\NN$, define a subgroupoid $\mathcal{R}_n\subset \rp$ by saying that $(T, T-x)\in\mathcal{R}_n$ if 0 and $x$ are punctures in the same $n$th-order supertile in $T$'s unique decomposition into $n$th-order supertiles. 

The subgroupoids $\mathcal{R}_n$ also have a description in terms of compact open graphs. For $p\in\p$ and $n\in\NN$ let Punc$(n,p)$ be the set of punctures in $\omega^n(p)$. For $x,y\in$ Punc$(n,p)$, define
\[
E^n_p(x,y) = \left\{\left(\omega^n(T) - x, \omega^n(T) - y\right) \mid T\in U(\{p\}, p)\right\}.
\]
It is easy to check that these are compact open graphs, and that $\mathcal{R}_n$ is the disjoint union of $E^n_p(x,y)$ as $p$ ranges over $\p$ and $x,y$ range over Punc$(x,y)$. The union of this nested sequence of compact open subgroupoids is denoted $$\raf := \cup \mathcal{R}_n.$$ 

At this point, it is not clear that $\raf$ is not all of $\rp$, so we give an example where we do not have equality.

On the left in the below picture, there is a patch from a Penrose tiling, which we call $P$. On the right is $\omega^2(P)$. 
\begin{center}
\includegraphics[scale = 0.1]{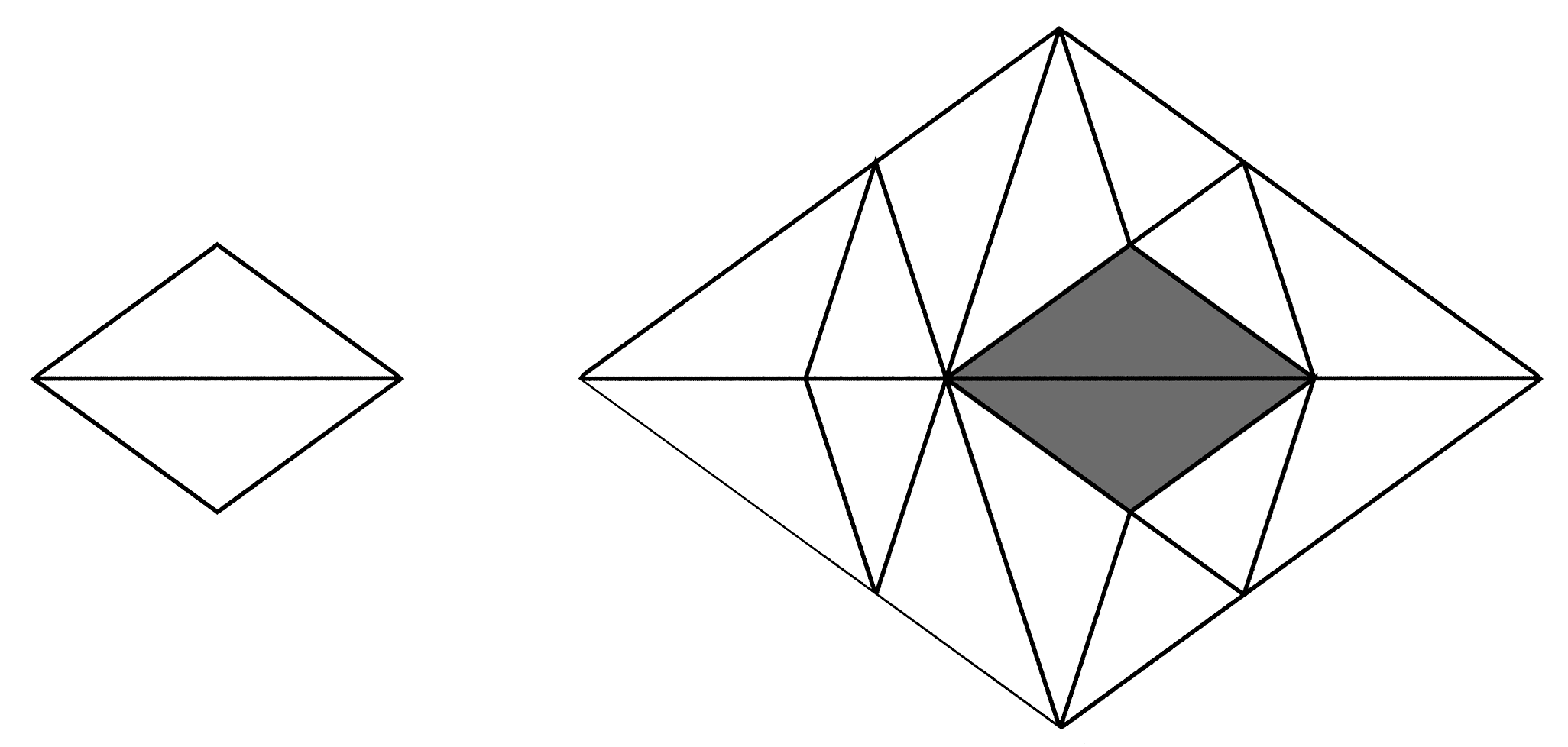}
\end{center}
We see that $P\subset \omega^2(P)$ (after choosing an origin for $P$ as we did in the discussion after Assumption \ref{primitive}), and so setting $T = \cup \omega^{2i}(P)$ yields a tiling which has a biinfinite line consisting of edges of tiles through the origin. If $x$ is a puncture of $T$ above this line and $y$ is a puncture below it, then $x$ and $y$ are never contained in the same $n$th order supertile for any $n$, and so $(T-x, T-y)\in\rp \setminus \raf$.
 
Relative to $\raf$, $\rp$ is an almost AF Cantor groupoid. The key observation needed to show this (made essentially by Putnam in \cite{Pu00} and stated in this form by Phillips \cite{Ph05}) is that, if for $r>0$ we set
\begin{equation}\label{Lr}
L_r = \{ (T, T-x)\in \rp  \setminus \raf \mid \|x\|\leq r\}
\end{equation}
and $K\subset \rp  \setminus \raf$ is a compact set, then $s(K)$ is contained in $r(L_r)$ for some $r$. Furthermore, $r(L_r)$ is thin, and so $s(K)$ is as well. We note that this depends on $(\p, \omega)$ forcing the border (Assumption \ref{forcesborder}) and the capacity of the boundary of each prototile being strictly less than $d$. The only invariant subsets of $\op$ are $\op$ or the empty set, and so $\rp$ is an almost AF Cantor groupoid (\cite{Ph05}, Theorem 7.1).

We denote $$\Aw := C^*_r(\rp)$$ $$AF_\omega := C^*_r(\raf).$$
These were denoted $A_T$ and $AF_T$ respectively in \cite{KP00}, and there it was suggested that $\Aw$ and $AF_\omega$ might be more appropriate. We adopt this view to emphasize the dependence on the substitution rather than any one particular tiling. This C*-algebra was defined by Kellendonk \cite{KelNCG} and studied further in \cite{KP00}, \cite{Pu00}, and later \cite{Ph05}. Since $\rp$ is an almost AF Cantor groupoid, $\Aw$ enjoys the properties listed in Theorem \ref{phillipslist}.

There is a convenient presentation of $AF_\omega$ as an inductive limit of finite dimensional C*-algebras which we will now briefly summarize. For the details, see \cite{KP00} pp. 199-201. For $x,y\in$ Punc$(x,y)$, let $e^n_p(x,y)$ denote the characteristic function of $E^n_p(x,y)$. These are elements of $AF_\omega\subset \Aw$.  Then for $p,p'\in\p$, $x, y\in$ Punc$(n,p)$, and $x', y'\in$ Punc$(n,p')$ we have
\[
\begin{array}{rll}
e^n_p(x,y)e^n_{p'}(x', y')&= 0 & \text{if }p\neq p'\\
e^n_p(x,y)e^n_{p'}(x', y')&= 0 & \text{if }p= p' \text{ and }y \neq x'\\
e^n_p(x,y)e^n_{p'}(x', y')&= e^n_p(x, y') & \text{if }p= p' \text{ and }y= x'.
\end{array}
\]
These imply that if we fix $n\in\NN$ and $p\in\p$ and let
\[
A_{n,p} = \textnormal{span}_{\mathbb{C}}\left\{e^n_p(x,y) \mid x,y\in \textnormal{ Punc}(n,p)\right\}
\]
then $A_{n,p}$ is a $*$-subalgebra of $\Aw$ isomorphic to the $(m\times m)-$matrices, where $m =$ \#Punc$(n,p)$. Furthermore, if $p\neq p'$, then $A_{n,p}$ and $A_{n,p'}$ are orthogonal, and hence their direct sum
\[
A_n := \bigoplus_{p\in\mathcal P} A_{n,p}
\]
is also a subalgebra of $\Aw$. It is straightforward to verify that for $n\in\NN$ we have $A_n\subset A_{n+1}$, and that the identity of $\Aw$ is $$\sum_{p\in\p}e^0_p(0,0)\in A_0,$$ and so the identity is in $A_n$ for all $n\in\NN$. For any $n\in \NN$, the identity can be written as
\begin{equation}\label{identity}
1=\sum_{p_i\in\mathcal{P}} \sum_{x\in\textnormal{Punc}(n,p_i)}e^n_{p_i}(x,x).
\end{equation}
It is a fact that $AF_\omega = \overline{\cup A_n}$.
 
The unital inclusion $A_n\hookrightarrow A_{n+1}$ has a nice description in terms of the substitution. Recall that if  $\varphi:A \to B$ is a unital $*$-homomorphism with $A = \oplus_{i=1}^k\mathbb M_{n_i}$ and $B = \oplus_{i=1}^l\mathbb M_{m_i}$ finite dimensional, then $\varphi$ is determined up to unitary equivalence in $B$ by an $l\times k$ matrix $M$ of nonnegative integers called the matrix of {\bf partial multiplicities}. If $M = [M_{ij}]$, then the integer $M_{ij}$ is the multiplicity of the embedding of the summand $\mathbb M_{n_j}$ of $A$ into the summand $\mathbb M_{m_i}$ of $B$. For details see \cite{Dav} Lemma III.2.1.

One can obtain the matrix of partial multiplicities is through traces. If $\tau$ is a trace on $\mathbb{M}_n$, then it is a positive scalar multiple of the usual matrix trace Tr (this is the sum of the diagonal entries). If $A = \oplus_{i=1}^k\mathbb M_{n_i}$, then for each $j$,
\[
\tau^A_j\left((a_i)_{i=1}^k\right) = \textnormal{Tr}(a_j)
\]
is a trace on $A$. Furthermore, every trace on $A$ can be written as a positive linear combination of the $\tau^A_j$ since restricting to a summand yields a trace on that summand. Let $B = \oplus_{i=1}^l\mathbb M_{m_i}$ and suppose that $\varphi: A\to B$ is a unital injective homomorphism of C*-algebras. Then for each $i$ between 1 and $l$, $\tau^B_i\circ\varphi$ is a trace on $A$. Furthermore, if we denote by $q_i$ the identity on the $i$th summand in $A$, $\tau^B_i\circ\varphi(q_s)$ should be the trace of $q_s$ multiplied by the multiplicity of the embedding of the summand $\mathbb M_{n_s}$ of $A$ into the summand $\mathbb M_{m_i}$ of $B$. On the other hand, we know that
\begin{equation}\label{incidencematrixtracesequation}
\tau^B_i\circ\varphi = \sum_{j=1}^{k}M_{ij}\tau^A_j
\end{equation}
for some positive scalars $M_{ij}$. Hence,
\[
\tau^B_i\circ\varphi(q_s) = \sum_{j=1}^{k}M_{ij}\tau^A_j(q_s) =  M_{is}\tau^A_s(q_s) =  M_{is}n_s,
\]
and so $ M = [ M_{ij}]$ is the matrix of partial multiplicities of the inclusion. A formula for its entries is given by manipulating the above,
\begin{equation}\label{incidenceformula}
 M_{ij} = \frac{\tau^B_i\circ\varphi(q_j)}{\tau^A_j(q_j)}.
\end{equation}

One can show that the matrix of partial multiplicities of the unital inclusion $A_n\hookrightarrow A_{n+1}$ is independent of $n$, and is the $(\Npro \times \Npro)$ matrix $M$ whose $(i,j)$th entry is the number of translates of prototile $p_j$ in $\omega(p_i)$ (see \cite{KP00}, Section 9). Since $\omega$ is primitive, the matrix $M$ is primitive in the sense that there exists $k\in\NN$ such that $M^k$ has strictly positive entries.

We now turn our attention to traces on $\Aw$. By Theorem \ref{phillipslist}, the traces on $\Aw$ are in one-to-one correspondence with the traces on $AF_\omega$. In our case, $AF_\omega$ is an AF algebra with a constant primitive matrix of partial multiplicites. By \cite{Ha81} Theorem 4.1, such an AF algebra has a unique tracial state. Hence $\Aw$ has a unique tracial state as well. This trace is given by integration against a unique $\rp$-invariant probability measure $\mu$ -- for details on this see \cite{KelNCG}, \cite{KP00} or \cite{Pu00}.  We describe the essential properties of this trace and how to calculate it on elements of $AF_\omega$. 

Since the matrix $M$ is primitive, by the Perron-Frobenius Theorem $M$ admits left and right eigenvectors whose entries are all positive and whose eigenvalue is positive and strictly larger in modulus than the other eigenvalues of $M$. For a primitive substitution tiling system $(\mathcal P, \omega)$ in $\rd$ with expansion constant $\lambda$, the Perron eigenvalue is $\lambda^d$. Furthermore, if $\mathcal{P} = \{p_1, p_2, \dots, p_{\Npro}\}$ and $v_R$ is the vector whose $i$th entry is the volume of $p_i$, then $v_R$ is a right Perron-Frobenius eigenvector of $M$, see \cite{So98}, Corollary 2.4 (note that the substitution matrix as defined by Solomyak is the transpose of our substitution matrix). If $v_L$ is the vector whose $i$th entry is the relative frequency of translates of the prototile $p_i$ in any tiling $T\in \Omega$, then $v_L$ is a left Perron-Frobenius eigenvector of  $M$, see \cite{KelNCG}, Section 4. 

Now, given a basis element $e^n_p(x,y)$, its trace is
\begin{equation}\label{traceonafw}
\tau(e^n_{p_i}(x,y)) = \left\{\begin{array}{ll} \lambda^{-dn}v_L(i)& \textnormal{if } x=y\\ 0 & \textnormal{if } x\neq y\end{array}\right. .
\end{equation}
Normalizing $v_L$ so that $\tau$ is a tracial state gives us
\[
\sum_{p_i\in \mathcal P}v_L(i) = 1
\]
and, applying $\tau$ to both sides of (\ref{identity}) yields
\begin{equation}
\sum_{p_i\in\mathcal{P}}\#\textnormal{Punc}(n,p_i)\lambda^{-dn}v_L(i)=1.\label{tracialstateformula}
\end{equation}
We will use this to prove results related to the weak Rokhlin property in Section \ref{RPsection}.

To close this section, we prove that $\Aw$ is finitely generated. What we prove here is certainly the same idea as \cite{KelCoin}, paragraph 4, but we state it in terms elements of $\Aw$. 

If we let $e(P, t_1, t_2)$ be the characteristic function of $V(P,t_1,t_2)$ (from (\ref{VPt1t2})), then $e(P,t_1,t_2)$ is an element of $C_c(\rp)$. Let $P, P'$ be patches and let $t_1, t_2, t\in P$ and $t_1', t_2'\in P'$. Assume without loss of generality that $x_{t_2} = 0$ and that $x_{t_1'} = 0$. Then we have the following.
\begin{enumerate}
\item The product $e(P, t_1, t_2)e(P',t_1', t_2')$ is nonzero precisely when $U(P, t_1)\cap U(P',t_2) \neq \emptyset$ and the patches $P$ and $P'$ agree on the overlap of their supports, i.e., $P \cup P'$ is a patch. In this case the product is $e(P\cup P', t_1, t_2')$.
\item $e(P, t_1, t_2)^* = e(P, t_2, t_1)$.
\item $e(P, t, t) e(P, t, t) = e(P, t, t)$.
Hence each $e(P, t, t)$ is a projection and $e(P, t_1, t_2)$ is a partial isometry from $e(P, t_2, t_2)$ to $e(P, t_1, t_1)$ in $C_c(\rp)$.
\end{enumerate}
The linear span of the set
\[
\mathcal{E} = \{e(P,t_1, t_2) \mid P \text{ is a patch in some }T\in\Omega; t_1, t_2\in P\}
\]
is dense in $C_c(\rp)$ and hence in $\Aw$ (see \cite{KP00}, Section 4). Suppose that $t_1$ and $t_2$ are tiles in some tiling in $\Omega$ and that Int$(t_1\cup t_2)$ is connected, that is, $t_1$ and $t_2$ are adjacent. We write
\[
e_{t_1t_2} = e(\{t_1, t_2\},t_1, t_2)
\]
and set
\[
\mathcal{E}_2 = \{e_{t_1t_2}\mid \text{Int}(t_1\cup t_2)\text{ is connected}\}.
\]
Then $\mathcal{E}_2$ is finite by finite local complexity. We will show that every element of $\mathcal{E}$ can be written as a finite sum of a finite product of elements of $\mathcal{E}_2$; this will show that $\mathcal{E}_2$ is a generating set for $\Aw$.

\begin{lemma}\label{finitesumconnected}
Let $P$ be a patch with $t_1, t_2\in P$ and $x_{t_1} = 0$. Let $r>0$ be such that supp$(P)\subset B_r(0)$ and let
\[
Y = \{ T(B_r(0))\mid T\in U(P, t_1)\}.
\]
Then $Y$ is a finite set and
\[
V(P, t_1, t_2) = \dot{\bigcup_{P'\in Y}}V(P', t_1, t_2)
\]
where the union is disjoint.
\end{lemma}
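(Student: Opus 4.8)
The plan is to reduce the statement to a combinatorial fact about the clopen sets $U(\cdot,t_1)\subseteq\op$ and then read off the conclusion from the fact that $V(\cdot,t_1,t_2)$ is a graph. Throughout I use the normalization $\x(t_1)=0$ allowed by the hypothesis, so that $U(Q,t_1)=\{T\in\op\mid Q\subseteq T\}$ for any patch $Q$ containing $t_1$, and I recall that $V(Q,t_1,t_2)$ is the graph of the partial homeomorphism $T\mapsto T-\x(t_2)$ of $\op$ with domain $U(Q,t_1)$ and range $U(Q,t_2)$ --- aperiodicity of the tilings in $\Omega$ is what makes the translate unique, hence this description well posed. The whole lemma will follow once I establish that (a) $Y$ is finite, and (b) the sets $U(P',t_1)$, $P'\in Y$, form a partition of $U(P,t_1)$. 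Indeed, granting (a) and (b): every $P'\in Y$ will be shown to contain $P$, so that $t_1,t_2\in P'$ and the same vector $\x(t_2)$ defines the graph in each case; thus $V(P,t_1,t_2)$, being the graph of $T\mapsto T-\x(t_2)$ over $U(P,t_1)$, is the union over $P'\in Y$ of the graphs of $T\mapsto T-\x(t_2)$ over the pieces $U(P',t_1)$, that is, of the sets $V(P',t_1,t_2)$, and the union is disjoint because the $U(P',t_1)$ are.

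First I would record that $P\subseteq P'$ for every $P'\in Y$: writing $P'=T_0(B_r(0))$ with $T_0\in U(P,t_1)$, each tile of $P$ lies in $\supp(P)\subseteq B_r(0)$, hence meets $B_r(0)$, hence belongs to $T_0(B_r(0))=P'$; in particular $t_1,t_2\in P'$ and $U(P',t_1)\subseteq U(P,t_1)$. For (a): a tile meeting $B_r(0)$ is contained in $B_{r+c}(0)$, where $c$ is the largest diameter of a prototile, so every $P'\in Y$ is a patch of diameter at most $2(r+c)$ that contains the tile $t_1$. By finite local complexity (Assumption \ref{FLC}) there are only finitely many patches of diameter at most $2(r+c)$ up to translation, and among all translates of any one of these only finitely many contain the fixed tile $t_1$ (by uniqueness of the prototile--translate decomposition of a tile); hence $Y$ is finite. (Alternatively, finiteness follows a posteriori from (b), since a partition of the compact set $U(P,t_1)$ into open sets must be finite.)

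For (b), covering is immediate: if $T\in U(P,t_1)$ then $P\subseteq T$, so $P':=T(B_r(0))$ lies in $Y$ and $T\in U(P',t_1)$. For disjointness I would use the standard observation that a patch $Q\subseteq T$ whose support contains $B_r(0)$ must satisfy $T(B_r(0))\subseteq Q$: given a tile $t\in T$ meeting the open set $B_r(0)$, pick $y\in\textnormal{Int}(t)\cap B_r(0)$; then $y\in\supp(Q)$ lies in some tile $t'\in Q\subseteq T$, and since the interiors of the tiles of $T$ are pairwise disjoint while $y\in\textnormal{Int}(t)\cap t'$, necessarily $t=t'\in Q$. Now suppose $T\in U(P'_1,t_1)\cap U(P'_2,t_1)$, so $P'_1,P'_2\subseteq T$; each $P'_i=T_i(B_r(0))$ has support containing $B_r(0)$ (as $T_i$ is a tiling) and consists of tiles meeting $B_r(0)$, so the observation gives $T(B_r(0))\subseteq P'_i\subseteq T(B_r(0))$, whence $P'_1=T(B_r(0))=P'_2$. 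This proves (b) and hence the lemma.

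The only genuinely load-bearing ingredients are the disjoint-interiors observation just used and the graph description of $V(\cdot,t_1,t_2)$, in which aperiodicity of the tilings in $\Omega$ is what pins down the translate $\x(t_2)$; the rest is bookkeeping with finite local complexity, so I do not expect a serious obstacle.
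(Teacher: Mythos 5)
Your proof is correct and follows essentially the same route as the paper's: finiteness of $Y$ from finite local complexity, covering by assigning to $T\in U(P,t_1)$ the patch $P'=T(B_r(0))$, and disjointness by showing that any $T$ lying in $U(P_1',t_1)\cap U(P_2',t_1)$ forces $P_1'=T(B_r(0))=P_2'$. The only cosmetic difference is that you establish the partition at the level of the clopen sets $U(P',t_1)$ and then transfer it to the graphs $V(P',t_1,t_2)$, and you make explicit the disjoint-interiors argument that the paper leaves implicit in the chain $P_1=P_1(B_r(0))=T(B_r(0))=P_2(B_r(0))=P_2$.
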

\begin{proof}
That $Y$ is finite follows from finite local complexity. Take $P_1, P_2\in Y$ with $P_1 \neq P_2$, and suppose that $(T,T+x)\in V(P_1, t_1, t_2)\cap V(P_2, t_1, t_2)$. This implies that $T\in U(P_1, t_1)\cap U(P_2, t_1)$, and hence $P_1, P_2\subset T$. But this means that 
\[
P_1 = P_1(B_r(0)) = T(B_r(0)) = P_2(B_r(0)) = P_2,
\]
a contradiction, and hence the sets $V(P', t_1, t_2)$ are pairwise disjoint.

Let $(T, T+x)\in V(P', t_1, t_2)$ for some $P'\in Y$. Then since $P\subset P'$, we must have that $(T, T+x)\in V(P, t_1, t_2)$. Conversely suppose that $(T, T+x)\in V(P, t_1, t_2)$. Then define $P' = T(B_r(0))$. We have $P'\in Y$ and so $(T,T+x)\in V(P', t_1, t_2)$.
\end{proof}
If we let $\mathcal{E}_c = \{e(P,t_1, t_2)\in \mathcal{E}\mid P\text{ is a connected patch}\}$, then Lemma \ref{finitesumconnected} tells us that the span of $\mathcal{E}_c$ is dense in $\Aw$ as well. We now show that each element of $\mathcal{E}_c$ can be written as a product of elements of $\mathcal{E}_2$.

\begin{lemma}\label{finiteproduct}
If $e(P, t_1, t_2)\in \mathcal{E}_c$, it is a finite product of elements of $\mathcal{E}_2$.
\end{lemma}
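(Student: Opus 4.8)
The plan is to produce $e(P,t_1,t_2)$ as a finite product of elements of $\mathcal E_2$ in three stages: first build ``path'' elements out of the generators in $\mathcal E_2$, then realize the diagonal projections $e(P,t_i,t_i)$ as products of such path elements whose supports together exhaust $P$, and finally glue using the product rule listed above. If $P$ is a single tile there is nothing to prove, since then $t_1=t_2$ and $e(P,t_1,t_2)=e_{t_1t_1}\in\mathcal E_2$ (note that, as $\mathrm{Int}(t)$ is connected for any tile $t$, the projections $e_{tt}=e(\{t\},t,t)$ belong to $\mathcal E_2$). So assume $\#P\ge 2$. Since $P$ is connected, the graph whose vertices are the tiles of $P$, with an edge joining $t$ and $t'$ exactly when $\mathrm{Int}(t\cup t')$ is connected, is itself connected; fix a spanning tree $\mathcal T$ of it. For $a,b\in P$ joined in $\mathcal T$ by the edge-path $a=s_0\sim s_1\sim\cdots\sim s_k=b$, set $e(a,b):=e_{s_0s_1}e_{s_1s_2}\cdots e_{s_{k-1}s_k}$ (and $e(a,a):=e_{aa}$). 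Each factor lies in $\mathcal E_2$, and iterating the product rule — translating the shared tile to the origin at each step — shows $e(a,b)=e(Q_{ab},a,b)$, where $Q_{ab}=\{s_0,\dots,s_k\}$ is the support of the path; in particular $e(a,b)^*=e(b,a)$ and $e(a,b)e(b,a)=e(Q_{ab},a,a)$.

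Next I would fix $t_1$ and, for each tile $r\in P$, let $Q_r$ be the support of the $\mathcal T$-path from $t_1$ to $r$; note that $t_1\in Q_r$ for every $r$ and that $\bigcup_{r\in P\setminus\{t_1\}}Q_r=P$. Each $e(t_1,r)e(r,t_1)$ equals $e(Q_r,t_1,t_1)$, a projection supported on the unit space, and the product rule gives $e(Q,t_1,t_1)\,e(Q',t_1,t_1)=e(Q\cup Q',t_1,t_1)$ whenever $Q,Q'\subseteq P$; in particular these commute, and a short induction on $\#(P\setminus\{t_1\})$ yields
\[
\prod_{r\in P\setminus\{t_1\}} e(t_1,r)\,e(r,t_1)\;=\;e\!\left(\bigcup_{r\in P\setminus\{t_1\}}Q_r,\;t_1,t_1\right)\;=\;e(P,t_1,t_1).
\]
Hence $e(P,t_1,t_1)$, and symmetrically $e(P,t_2,t_2)$, is a finite product of elements of $\mathcal E_2$. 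Finally, since $Q_{t_1t_2}\subseteq P$, the product rule gives
\[
e(P,t_1,t_2)=e(P,t_1,t_1)\cdot e(t_1,t_2)\cdot e(P,t_2,t_2),
\]
and combining this with the previous two facts exhibits $e(P,t_1,t_2)$ as a finite product of elements of $\mathcal E_2$.

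The one point that needs genuine care is the claim used to build $\mathcal T$: that connectedness of $P$ — that is, of $\mathrm{Int}(\supp(P))$ — forces the tile-adjacency graph of $P$ to be connected. This is a routine but slightly fussy point-set argument: if the tiles of $P$ split into two nonempty families $A$ and $B$ with no adjacent pair across them, then $\mathrm{Int}(\supp(P))=\mathrm{Int}(\supp(A))\sqcup\mathrm{Int}(\supp(B))$ — using that a tile equals the closure of its interior and that boundaries of tiles have empty interior — contradicting connectedness. Everything else is bookkeeping with the product rule, above all keeping track of which tile is normalized to the origin at each multiplication, and I do not expect any real obstacle there.
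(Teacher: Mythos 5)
Your proof is correct and follows essentially the same strategy as the paper's: build path words $e_{s_0s_1}\cdots e_{s_{k-1}s_k}$ along chains of adjacent tiles, multiply the resulting projections $e(Q_r,t_1,t_1)$ to obtain $e(P,t_1,t_1)$, and append one more path word to pass from $t_1$ to $t_2$. The only additions are cosmetic (the spanning tree, the explicit justification that connectedness of $\mathrm{Int}(\mathrm{supp}(P))$ forces the tile-adjacency graph to be connected, and the redundant final factor $e(P,t_2,t_2)$), so no further changes are needed.
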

\begin{proof}
Let $P = \{t_1, t_2, t_3, \dots, t_n\}$. Assume without loss of generality that $x_{t_1} = 0$. 
For each $1\leq i\leq n$, there exist tiles $s_1, s_2, \dots, s_{k_i}$ in $P$ such that $s_1 = t_1$, $s_{k_i} = t_i$, and for all $1\leq j\leq k_i$ we have $\{s_j, s_{j+1}\}$ is a connected patch. Let
\[
w_i := e_{s_1s_2}e_{s_2s_3}\cdots e_{s_{k_i-1}s_{k_i}}.
\]
Then we see that
\[
w_i = e\left(\bigcup_{m=1}^{k_i}\{s_m\}, t_1, t_i\right)
\]
and
\[
w_iw_i^* = e\left(\bigcup_{m=1}^{k_i}\{s_m\}, t_1, t_1\right).
\]
Finally, if we take the product of all of these, we see that the patch obtained must contain each tile in $P$, so that
\[
\prod_{i=1}^{n}w_iw_i^* = e(P, t_1, t_1),
\]
and so
\[
\left(\prod_{i=1}^{n}w_iw_i^*\right)w_2^* = e(P, t_1, t_1)e\left(\bigcup_{m=1}^{k_2-1}\{t_{j_m}\}, t_1, t_2\right) = e(P, t_1, t_2).
\]
\end{proof}
\begin{proposition}\label{Awfinitelygenerated}
The finite set $\mathcal{E}_2$ is a generating set of $\Aw$.
\end{proposition}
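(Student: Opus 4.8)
The plan is to assemble the pieces already proven into a short argument. The key observation is that the three lemmas and the density statements preceding them form a chain: the linear span of $\mathcal{E}$ is dense in $C_c(\rp)$ and hence in $\Aw$; Lemma \ref{finitesumconnected} shows every $e(P,t_1,t_2)\in\mathcal{E}$ is a \emph{finite sum} of elements of $\mathcal{E}_c$ (the generators attached to connected patches), so the span of $\mathcal{E}_c$ is also dense in $\Aw$; and Lemma \ref{finiteproduct} shows every element of $\mathcal{E}_c$ is a \emph{finite product} of elements of $\mathcal{E}_2$. Combining these, the $*$-algebra generated by $\mathcal{E}_2$ contains the span of $\mathcal{E}_c$, which is dense, so $C^*(\mathcal{E}_2) = \Aw$.

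First I would recall that by \cite{KP00}, Section 4, $\textnormal{span}(\mathcal{E})$ is dense in $C_c(\rp)$, and since $C_c(\rp)$ is dense in $\Aw$ in the reduced norm, $\textnormal{span}(\mathcal{E})$ is dense in $\Aw$. Next, I would invoke Lemma \ref{finitesumconnected}: given any $e(P,t_1,t_2)\in\mathcal{E}$, after translating so that $x_{t_1}=0$ we may write $V(P,t_1,t_2)$ as a finite disjoint union of sets $V(P',t_1,t_2)$ with each $P'\in Y$ a patch of the form $T(B_r(0))$, which is connected since it is a ball-shaped patch (its support is a union of tiles meeting a connected set, and one can refine to a connected subpatch containing $t_1,t_2$ — or simply note $T(B_r(0))$ has connected interior of support). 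Passing to characteristic functions, $e(P,t_1,t_2) = \sum_{P'\in Y} e(P',t_1,t_2)$, a finite sum of elements of $\mathcal{E}_c$. Hence $\textnormal{span}(\mathcal{E}_c)$ is dense in $\Aw$.

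Then I would apply Lemma \ref{finiteproduct}: each $e(P,t_1,t_2)\in\mathcal{E}_c$ is a finite product of elements of $\mathcal{E}_2$, hence lies in the $*$-subalgebra generated by $\mathcal{E}_2$ (note $\mathcal{E}_2$ is closed under adjoints up to relabeling, since $e_{t_1t_2}^* = e_{t_2t_1}$ and $\textnormal{Int}(t_1\cup t_2)$ connected iff $\textnormal{Int}(t_2\cup t_1)$ is). Therefore the $*$-subalgebra generated by $\mathcal{E}_2$ contains $\textnormal{span}(\mathcal{E}_c)$, which is dense in $\Aw$, so the C*-subalgebra generated by $\mathcal{E}_2$ is all of $\Aw$. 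Finally, $\mathcal{E}_2$ is finite by finite local complexity (Assumption \ref{FLC}), as already noted in the text. This completes the proof that $\Aw$ is finitely generated.

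The argument is essentially bookkeeping; the only point requiring a little care is the claim that the patches $P' = T(B_r(0))$ appearing in Lemma \ref{finitesumconnected} are connected in the sense needed for membership in $\mathcal{E}_c$, and that Lemma \ref{finiteproduct} genuinely applies to them — but this is immediate since a patch consisting of all tiles meeting a closed ball has support with connected interior (assuming, as we may, that $r$ is large enough that $B_r(0)$ meets every tile of $P$ and the resulting patch is connected; enlarging $r$ only enlarges $Y$ harmlessly). So there is no real obstacle: the proposition is the formal conclusion of the preceding two lemmas together with the standard density of $\textnormal{span}(\mathcal{E})$.
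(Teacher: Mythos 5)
Your argument is correct and is precisely the paper's own proof: the paper simply cites Lemma \ref{finitesumconnected}, Lemma \ref{finiteproduct}, and \cite{KP00}, Section 4, and you have filled in the same chain of reductions (density of $\textnormal{span}(\mathcal{E})$, reduction to $\mathcal{E}_c$ by finite sums, then to $\mathcal{E}_2$ by finite products). Your added care about the connectedness of the patches $T(B_r(0))$ is a reasonable point the paper leaves implicit, but it does not change the route.
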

\begin{proof}
This follows from Lemmas \ref{finitesumconnected} and \ref{finiteproduct} along with \cite{KP00}, Section 4.
\end{proof}

\section{Symmetry Group Actions on Tiling Groupoids and C*-algebras}\label{crossedproductsection}

 In this section we show that if $G$ is a symmetry group for $(\p,\omega)$ which acts freely on $\p$, then $\rp\rtimes G$ is an almost AF Cantor groupoid with respect to $\raf\rtimes G$, which we show is an AF groupoid. We will then conclude from Proposition \ref{semidirectcrossedproduct} that $\Aw\rtimes G$ enjoys the properties listed in Theorem \ref{phillipslist}. We also show that the reduced C*-algebra of $\raf\rtimes G$ has a unique trace, and conclude that $A_\omega\rtimes G$ also has a unique trace.

If $G$ is a symmetry group for $(\p,\omega)$, then $G$ acts on $\rp$ and $\raf$. The action $\alpha: G \to$ Aut$(\rp)$ is given for $(T, T')\in\rp$ by
\[
\alpha_g(T,T') = (gT, gT').
\] 

An element of $\rp\rtimes_\alpha G$ is of the form $((T, T'), g)$ for $(T, T')\in\rp$ and $g\in G$. If $\gamma = ((T,  T'), g)$, then 
\[
\gamma^{-1} = ((g^{-1}T', g^{-1}T), g^{-1}), \hspace{0.5cm} s(\gamma) = g^{-1}T', \hspace{0.5cm} r(\gamma) = T,
\]
From now on we omit the subscript $\alpha$ and write $\rp\rtimes G$ since we have only one action to consider.

\begin{lemma}\label{semidirectofafisaf}
Suppose that $G$ is a finite symmetry group for $(\mathcal{P},\omega)$ and that $G$ acts freely on $\mathcal{P}$. Let $\raf$ be the AF Cantor groupoid associated to $(\mathcal{P},\omega)$. Then $\raf \rtimes G$ is an AF Cantor groupoid.
\end{lemma}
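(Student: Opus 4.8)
The plan is to realize $\raf \rtimes G$ as an increasing union of compact open principal Cantor subgroupoids containing the unit space $\op$, which is exactly what the definition of an AF Cantor groupoid requires. Recall that $\raf = \bigcup_n \mathcal{R}_n$ where each $\mathcal{R}_n$ is compact open and principal, and that $G$ permutes the $\mathcal{R}_n$ among themselves; in fact each $\mathcal{R}_n$ is $G$-invariant, since $g$ sends an $n$th-order supertile in $T$'s decomposition to an $n$th-order supertile in $gT$'s decomposition (using $\omega(gp) = g\omega(p)$). So the first step is to set $\mathcal{R}_n \rtimes G$ and check it is a compact open subgroupoid of $\raf \rtimes G$: compactness is immediate since $\mathcal{R}_n$ is compact and $G$ is finite, openness from $\mathcal{R}_n$ being open in $\raf$ together with the product topology, and it visibly contains the unit space $\op \cong (\raf \rtimes G)^{(0)}$. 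One then observes $\raf \rtimes G = \bigcup_n (\mathcal{R}_n \rtimes G)$ with the union increasing. Since $\raf \rtimes G$ is a Cantor groupoid by Proposition \ref{semidirectrdiscrete}, all that remains is to verify each $\mathcal{R}_n \rtimes G$ is principal.

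Principality is the crux, and this is where freeness of the $G$-action on $\p$ enters. I need to show that if $((T,T'),g)$ and $((T,T''),h)$ both lie in $\mathcal{R}_n \rtimes G$ with the same range $T$ and the same source, then $g = h$ and $T' = T''$. Having equal range forces the $\rp$-components to start at $T$; equal source means $g^{-1}T' = h^{-1}T''$. Since $\mathcal{R}_n$ is already principal, it suffices to pin down $g = h$. The idea is that $T$ contains a tile $t_0$ with puncture $0$ (as $T \in \op$), and in $gT$ and $hT$ the respective images $g t_0$, $h t_0$ are tiles with puncture $0$; because the source $s((T,T'),g) = g^{-1}T'$ depends on $g$ through which prototile sits at the origin, equality of sources forces $g$ and $h$ to carry $t_0$ to tiles with punctures in the same $\mathcal R_n$-class of the common source tiling — and then freeness of $G$ on $\p$ (combined with the fact, noted in Section 2, that labeled prototiles satisfy $p + x = q \Rightarrow x = 0$) forces $g = h$. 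Making this precise is the main obstacle: one has to track carefully how the source map of the semidirect product interacts with the $G$-action on punctures and prototiles, and use freeness at exactly the point where two group elements would otherwise be indistinguishable. I expect this to reduce to the statement that the $G$-action on $\op$ is free on a dense (indeed, on the relevant) set, or more directly that $\mathcal R_n \rtimes G$ has trivial isotropy because an element $((T,T),g)$ with trivial $\rp$-part forces $gT = T$, hence $g$ fixes the prototile at the origin, hence $g = e$.

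Once principality of each $\mathcal{R}_n \rtimes G$ is established, the lemma follows: $\raf \rtimes G$ is the increasing union $\bigcup_n (\mathcal{R}_n \rtimes G)$ of compact open principal Cantor subgroupoids each containing the unit space, which is precisely the definition of an AF Cantor groupoid given earlier in the excerpt. I would close by remarking that the unit space $\op$ is compact with no isolated points, so this is also AF in Renault's sense. The only genuinely delicate point throughout is the use of freeness of $G$ on $\p$ to kill isotropy; everything else is a routine transcription of the definitions of the semidirect product and of an AF Cantor groupoid.
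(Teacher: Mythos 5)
Your overall architecture is exactly the paper's: write $\raf\rtimes G=\bigcup_n(\mathcal R_n\rtimes G)$, check that each piece is compact, open, and contains the unit space, and reduce everything to principality of $\mathcal R_n\rtimes G$, which is where freeness on $\p$ must enter. But the principality step is the entire content of the lemma, and you have not supplied it: you flag ``making this precise'' as the main obstacle and then offer two candidate reductions, neither of which works as stated. First, the shortcut via isotropy: an isotropy element of $\mathcal R_n\rtimes G$ at $T$ is not of the form $((T,T),g)$; it is of the form $((T,T+x),g)$ with $g^{-1}(T+x)=T$, where $x$ need not be zero (the range is $T$ but the source is $g^{-1}(T+x)$). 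So you cannot conclude $gT=T$ and read off $g$ from the prototile at the origin. Second, the ``compare which prototile sits at the origin of the source'' idea: writing $\gamma_i=((T,T+x_i),g_i)$ with equal range and source, equality of sources gives $T+x_1=g_1g_2^{-1}(T+x_2)$, and comparing tiles at the origin only yields $g_1^{-1}p_1=g_2^{-1}p_2$, where $p_i$ is the prototile underlying the tile of $T$ at the puncture determined by $x_i$. Since a single $n$th-order supertile generally contains translates of many different prototiles, $p_1\neq p_2$ in general, and freeness then gives you nothing.

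The missing idea is desubstitution. Because $(T,T+x_1)$ and $(T,T+x_2)$ lie in $\mathcal R_n$, the relevant punctures lie in one and the same $n$th-order supertile, so applying $\omega^{-n}$ (which commutes with the $G$-action and scales translations by $\lambda^{-n}$) collapses that supertile to a single tile $t$: the tilings $\omega^{-n}(T+x_1)$ and $\omega^{-n}(T+x_2)$ have the \emph{same} tile at the origin up to the translation $\lambda^{-n}(x_1-x_2)$. The identity $T+x_1=g_1g_2^{-1}(T+x_2)$ then becomes $t=g_1g_2^{-1}(t+\lambda^{-n}(x_1-x_2))$, and writing $t=p+y$ the labelling convention (if $p,q\in\p$ and $p+x=q$ then $x=0$) splits this into $p=g_1g_2^{-1}p$ together with a vanishing translation; only now does freeness give $g_1=g_2$, and then $x_1=x_2$. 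This is the step your proposal lacks, and without it the lemma is not proved. The bookkeeping you do carry out (compactness, openness, the increasing union, $G$-invariance of each $\mathcal R_n$) agrees with the paper and is fine.
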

\begin{proof}
It is enough to show that $\raf \rtimes G$ is an increasing union of compact open principal subgroupoids each with unit space $\op$. For $n\in\NN$, we first show that $\mathcal R_n\rtimes G$ is principal. Let $\gamma_i =((T_i, T_i + x_i),  g_i)$, for $i=1,2$, be elements of $\mathcal{R}_n\rtimes G$ and suppose that $r(\gamma_1) = r(\gamma_2)$ and $s(\gamma_1) = s(\gamma_2)$. For $i = 1,2$, we have $r(\gamma_i)= T_i$ and so $T_1 = T_2:=T$. This gives us $\gamma_1 =((T, T + x_1),  g_1)$ and $\gamma_2 = ((T, T + x_2),  g_2)$. For $i=1,2$ we have $s(\gamma_i) = g_i^{-1}(T+x_i)$ and so $ g_1^{-1}(T+x_1) =  g_2^{-1}(T+x_2)$, or $T+x_1 =  g_1 g_2^{-1}(T+x_2)$. The pairs $(T, T+x_1)$ and $(T,T+x_2)$ are both in $\mathcal R_n$. This means that $\omega^{-n}(T+x_1)$ and $\omega^{-n}(T+x_2)$ are both tilings with the same tile around the origin, only translated. That is to say that $\omega^{n}(T+x_1)(0) = t$ and $\omega^{-n}(T+x_2)(0) = t + \lambda^{-n}(x_1-x_2)$. But the above then implies that $t =  g_1 g_2^{-1}(t + \lambda^{-n}(x_1-x_2))$. 
There exists unique $p\in \p$ and $y\in\rd$ such that $t= p+y$, and so this implies that
\[
p = g_1g_2^{-1}p + g_1g_2^{-1}x + \lambda^{-n}(x_1-x_2) - x.
\]
Since $p$ and $g_1g_2^{-1}p$ are both prototiles and one is a translate of the other we must have that $p = g_1g_2^{-1}p$ and $g_1g_2^{-1}x + \lambda^{-n}(x_1-x_2) - x=0$. Since $G$ acts freely on $\p$ we have that $g_1=g_2$ and so $x_1 = x_2$. Thus each $\mathcal{R}_n\rtimes G$ is principal. It is easy to see that 
\[
\raf \rtimes  G = \bigcup_{n\in\NN}\mathcal R_n\rtimes G
\] 
and so $\raf \rtimes  G$ is an increasing union of compact principal groupoids. Since $\mathcal R_n\rtimes G$ inherits the product topology from $\mathcal R_n\times G$ and $\mathcal R_n$ is open in $\mathcal R_{n+1}$, we must have that $\mathcal R_n\rtimes G$ is open in $\mathcal R_{n+1}\rtimes G$. This completes the proof.
\end{proof}

\begin{lemma}\label{semidirectessentiallyprincipal}
Suppose that $G$ is a finite symmetry group for $(\mathcal{P},\omega)$, and suppose that $G$ acts freely on $\p$. Then the groupoid $\rp\rtimes G$ is essentially principal. 
\end{lemma}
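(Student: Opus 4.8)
The plan is to verify the definition of essentially principal directly: for every closed invariant subset $F\subseteq\op$, I must show that the set of $T\in F$ with trivial isotropy in $\rp\rtimes G$ is dense in $F$. The first observation is that the invariant closed subsets of $\op$ for $\rp\rtimes G$ are the same as those for $\rp$ — since $T+\rd$ is dense in $\Omega$ for all $T$ and $\op$ has no nontrivial $\rp$-invariant open subsets, in fact the only candidates are $F=\emptyset$ and $F=\op$. (Actually the invariance of $F$ under $\rp\rtimes G$ is a priori stronger than under $\rp$, so there are even fewer; either way the only nonempty closed invariant set is $\op$ itself, and the empty set is vacuous.) So it suffices to show that $\{T\in\op : (\rp\rtimes G)_T^T = \{T\}\}$ is dense in $\op$.

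Next I would unwind what an isotropy element looks like. An element $\gamma=((T,T+x),g)$ lies in $(\rp\rtimes G)_T^T$ precisely when $r(\gamma)=T$ (automatic) and $s(\gamma)=g^{-1}(T+x)=T$, i.e. $T+x = gT$. So nontrivial isotropy at $T$ means there is some $g\in G$, $g\ne e$ (or $g=e$ with $x\ne 0$, but $\rp$ is principal so that forces $x=0$), and $x\in\rd$ with $T+x=gT$; that is, $T$ is fixed by $g$ up to translation. Thus I need: the set of $T\in\op$ such that $T+x=gT$ has no solution for any $g\ne e$ is dense. Equivalently, the "bad" set $B_g=\{T\in\op : T+x=gT \text{ for some }x\in\rd\}$, for each fixed $g\ne e$, should have dense complement; since $G$ is finite, $\bigcup_{g\ne e}B_g$ is a finite union, so it is enough to show each $B_g$ is closed with empty interior (or at least nowhere dense).

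That each $B_g$ is nowhere dense is where the real work is, and I expect it to be the main obstacle. Closedness of $B_g$ follows from compactness of $\Omega$ and continuity of the $G$-action together with a standard finite-local-complexity argument (if $T_k+x_k = gT_k$ and $T_k\to T$ in $\op$, the translation amounts $x_k$ can be taken bounded — they move the puncture at the origin to a nearby puncture — so a subsequence converges and one gets $T+x=gT$). For empty interior: a basic clopen set $U(P,t)$ is contained in $B_g$ only if every tiling containing the patch $P-\x(t)$ around the origin has a $g$-rotational symmetry up to translation; I would rule this out by using primitivity to build, inside $U(P,t)$, a tiling of the form $T=\bigcup_k\omega^{kn}(s)$ fixed by $\omega^n$ with a single tile at the origin (as constructed after Assumption \ref{primitive}, suitably translated so that $P-\x(t)\subseteq T$), and observe that such a $T$ cannot satisfy $T+x=gT$: applying $\omega^{-kn}$ repeatedly to $T+x=gT$ gives $T+\lambda^{-kn}x = gT$ for all $k$, forcing $gT=T$, but then $g$ fixes the tile $T(0)$, hence fixes the prototile underlying it, contradicting freeness of the $G$-action on $\p$. (One must be slightly careful to choose the self-affine tiling so that it genuinely lies in $U(P,t)$; primitivity guarantees $P-\x(t)$ sits inside some supertile $\omega^N(q)$, and iterating the fixed-tile construction on $q$ produces the required tiling.) Combining: each $B_g$ is closed with empty interior, the finite union $\bigcup_{g\ne e}B_g$ is closed nowhere dense, its complement is dense in $\op=\op$, and on that complement the isotropy is trivial. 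Hence $\rp\rtimes G$ is essentially principal.
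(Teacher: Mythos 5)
Your overall strategy is sound and close in spirit to the paper's: both proofs reduce to showing that the translates of the self-affine tiling $T$ (the fixed point of $\omega^n$ constructed after Assumption \ref{primitive}) form a dense set of points with trivial isotropy, and both ultimately use freeness of $G$ on $\p$ to kill the group component. The paper gets there by observing that $T$, and hence each of its translates, is a nested union of supertiles, so any isotropy element lies in some $\mathcal{R}_{kn}\rtimes G$, which is principal by the proof of Lemma \ref{semidirectofafisaf}; you instead argue directly with the scaling relation and aperiodicity. That direct route is fine in principle (and is essentially the computation hidden inside Lemma \ref{semidirectofafisaf}).

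There is, however, a gap in the execution of your ``empty interior'' step. Your displayed computation (applying $\omega^{-kn}$ repeatedly to $T+x=gT$) is valid only for a tiling literally fixed by $\omega^n$, whereas the tiling you must exhibit inside $U(P,t)$ is a translate $T+y$ of the fixed point, and a translate is not fixed by $\omega^n$. Your proposed remedy --- that iterating the fixed-tile construction on a supertile $\omega^N(q)$ containing $P-\x(t)$ ``produces the required tiling'' genuinely lying in $U(P,t)$ --- does not work: that construction outputs one specific tiling with one specific tile at the origin, and there is no way to force it to contain $P-\x(t)$ at the prescribed position without translating it, which destroys the fixed-point property. The correct repair is to run the computation on the translate: if $(T+y)+x=g(T+y)$, then $T+(y+x-gy)=gT$, and applying $\omega^{-kn}$ gives $T+\lambda^{-kn}(y+x-gy)=gT$ for all $k$; aperiodicity forces $y+x-gy=0$, hence $gT=T$, so $g$ fixes the single tile $T(0)$, and freeness on $\p$ gives $g=e$ and then $x=0$. (Separately, your closedness argument for $B_g$ is both unnecessary --- exhibiting one trivial-isotropy point in each basic clopen set already gives density of the good set --- and shaky as stated, since the vectors $x_k$ with $T_k+x_k=gT_k$ need not be bounded along a convergent sequence $T_k\to T$.)
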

\begin{proof}
Find, as we did in the discussion after Assumption \ref{primitive}, $T\in\Omega$ such that $\omega^n(T) = T$
and $T$ is the nested union of $kn$-th order supertiles. The set $T - \x(T)$ is the intersection of the translational orbit of $T$ with $\op$, and is dense in $\op$. We take $T'\in T-\x(T)$ and show that it has trivial isotropy group. To do this, we take an element $((T', T'-x), g)$ whose range is $T'$, assume that its source is also $T'$ and show that $g=e$ and $x=0$. Since $T$ is the nested union of $kn$-th order supertiles $T'$ is as well, so it is possible to find $k$ such that $0$ and $x$ are punctures in the same $kn$-th order supertile $\omega^{kn}(t)$. 
Thus both $((T', T'-x), g)$ and $((T',T'), e)$ are elements of $\mathcal{R}_{kn}\rtimes G$, which is a principal groupoid by the proof of Lemma \ref{semidirectofafisaf}. Since these two elements have the same range and source, they must be equal. Hence $T'$ has trivial isotropy group and so $\rp \rtimes G$ is essentially principal.
\end{proof}
\begin{lemma}\label{minimalsemidirectgroupoid} Suppose that $G$ is a finite symmetry group for $(\mathcal{P},\omega)$, and suppose that $G$ acts freely on $\p$. Then the only open invariant subsets of $\op$ with respect to $\rp\rtimes G$ are $\emptyset$ and $\op$. Hence, $C^*_r(\rp\rtimes G)$ is simple.
\end{lemma}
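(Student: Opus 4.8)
The plan is to reduce the statement to the corresponding fact about $\rp$ itself, namely that the only open invariant subsets of $\op$ with respect to $\rp$ are $\emptyset$ and $\op$ (this was recorded in Section~\ref{tilingalgebra}). The key point is that an open subset $U\subset\op$ that is invariant for $\rp\rtimes G$ is in particular invariant for $\rp$, since $\rp$ embeds into $\rp\rtimes G$ as the set of elements of the form $((T,T'),e)$ and the range/source maps agree under the identification of unit spaces. So first I would observe that any $\rp\rtimes G$-invariant open $U$ is automatically $\rp$-invariant, hence equal to $\emptyset$ or $\op$ by minimality of $\rp$. This already gives the first sentence; one does not even need the $G$-invariance hypothesis beyond what is inherited, though it is of course consistent with it.

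For the second sentence, I would invoke the criterion quoted after Theorem~\ref{phillipslist}: by \cite{R80}, Proposition~II.4.6, if $\g$ is an essentially principal \'etale groupoid whose only open invariant subsets of $\g^{(0)}$ are $\g^{(0)}$ and $\emptyset$, then $C^*_r(\g)$ is simple. We have just shown $\rp\rtimes G$ has no nontrivial open invariant subsets of its unit space $\op$, and by Lemma~\ref{semidirectessentiallyprincipal} the groupoid $\rp\rtimes G$ is essentially principal (this uses that $G$ is a finite symmetry group acting freely on $\p$). Also $\rp\rtimes G$ is \'etale by Proposition~\ref{semidirectrdiscrete}. Applying the criterion gives that $C^*_r(\rp\rtimes G)$ is simple.

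There is essentially no obstacle here: the real content has been front-loaded into Lemma~\ref{semidirectessentiallyprincipal} (essential principality) and into the already-established minimality of $\rp$. The only thing to be slightly careful about is the identification of unit spaces and the verification that ``open invariant for $\rp\rtimes G$'' genuinely implies ``open invariant for $\rp$'' — this is immediate from the formulas $r((T,T'),e)=T$ and $s((T,T'),e)=g^{-1}T'|_{g=e}=T'$ recorded just before Lemma~\ref{semidirectofafisaf}, so for any $\gamma=((T,T'),g)\in\rp\rtimes G$ with $s(\gamma)=g^{-1}T'\in U$ we in fact get a genuine $\rp\rtimes G$-morphism witnessing membership, and restricting to $g=e$ recovers ordinary $\rp$-invariance. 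I would write this out in one or two lines and then cite \cite{R80} for the simplicity conclusion.
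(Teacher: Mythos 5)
Your proposal is correct and follows essentially the same route as the paper: both reduce the statement to the minimality of $\rp$ (the paper by noting every $\rp\rtimes G$-orbit is a union of dense $\rp$-orbits, you by noting every $\rp\rtimes G$-invariant set is $\rp$-invariant, which is the same observation), and both then conclude simplicity from Lemma~\ref{semidirectessentiallyprincipal} together with \cite{R80}, Proposition II.4.6.
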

\begin{proof}
Every $\rp\rtimes G$-orbit in $\op$ is the union of $\rp$-orbits, and each $\rp$-orbit is dense in $\op$. Hence every $\rp\rtimes G$-orbit is dense, and so the only nonempty closed invariant subset of $\op$ is $\op$. By Lemma \ref{semidirectessentiallyprincipal}, $\rp\rtimes G$ is essentially principal, and so by \cite{R80} Proposition II.4.6, $C^*_r(\rp\rtimes G)$ is simple.
\end{proof}
We are now in a position to prove the following theorem.
\begin{theorem}\label{RpxGalmostAF}
Suppose that $G$ is a finite symmetry group for $(\mathcal{P},\omega)$ and that $G$ acts freely on $\mathcal{P}$. Then $\rp\rtimes G$ is an almost AF Cantor groupoid with respect to the AF subgroupoid $\raf\rtimes G$.
\end{theorem}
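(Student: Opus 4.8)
The plan is to verify directly the two conditions in Definition \ref{almostaf} for the pair $(\rp\rtimes G, \raf\rtimes G)$, using the fact (Lemma \ref{semidirectofafisaf}) that $\raf\rtimes G$ is an AF Cantor groupoid and (Lemma \ref{minimalsemidirectgroupoid}) that $C^*_r(\rp\rtimes G)$ is simple. Recall from the remark after Theorem \ref{phillipslist} that simplicity of $C^*_r(\g)$ together with Condition 1 of Definition \ref{almostaf} is enough; so by Lemma \ref{minimalsemidirectgroupoid} it suffices to establish Condition 1, namely that whenever $K\subset(\rp\rtimes G)\setminus(\raf\rtimes G)$ is compact, the set $s(K)$ is thin for $\raf\rtimes G$ in the sense of Definition \ref{thin}.

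First I would unpack what the complement $(\rp\rtimes G)\setminus(\raf\rtimes G)$ looks like. Since $\rp\rtimes G = \rp\times G$ and $\raf\rtimes G = \raf\times G$ as sets, an element $((T,T-x),g)$ lies in the complement precisely when $(T,T-x)\in\rp\setminus\raf$, with no restriction on $g$. A compact set $K$ in the complement is therefore contained in a finite union $\bigcup_{g\in G}K_g\times\{g\}$, where each $K_g$ is a compact subset of $\rp\setminus\raf$ (here finiteness of $G$ is what makes this clean). Each projection to the $\rp$-coordinate, call it $\pi(K)$, is then a compact subset of $\rp\setminus\raf$. By the key estimate recalled just before the definition of $\Aw$ (essentially Putnam's observation, \cite{Ph05} Theorem 7.1), $s(\pi(K))\subset r(L_r)$ for some $r>0$, where $L_r$ is as in \eqref{Lr}, and $r(L_r)$ is thin for $\raf$. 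Now one computes that for $\gamma=((T,T-x),g)$ we have $s(\gamma)=g^{-1}(T-x)$, so $s(K)\subset\bigcup_{g\in G}g^{-1}\,r(L_r)$. The remaining work is: (i) a set that is thin for $\raf$ is still thin for the larger groupoid $\raf\rtimes G$, since the graphs witnessing thinness in $\raf$ are, via $S\mapsto S\times\{e\}$, graphs in $\raf\rtimes G$ with the same range and source sets; and (ii) a finite union of thin sets is thin — given $n$, take pairwise-disjoint-range graph-families of size $n|G|$ for one representative and regroup, or more simply cover each $g^{-1}r(L_r)$ by its own clopen piece and run Definition \ref{thin} on each, then combine. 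Alternatively, and perhaps more cleanly, one observes that $g^{-1}r(L_r)$ is itself thin for $\raf\rtimes G$: since $g\in G$ acts as an automorphism of $\raf$, it carries the witnessing graphs for thinness of $r(L_r)$ to witnessing graphs for thinness of $g^{-1}r(L_r)$, and then the finite-union step finishes it.

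The main obstacle I anticipate is not any single deep fact but bookkeeping: one must be careful that ``thin for $\raf$'' transfers to ``thin for $\raf\rtimes G$'' — the point being that enlarging the groupoid only makes it \emph{easier} to be thin (more graphs available, and the unit space is unchanged), so this direction is safe, whereas the reverse would not be. One must also handle the finite union correctly: thinness requires, for each $n$, graphs $S_1,\dots,S_n$ with common source $K$ and pairwise disjoint ranges, and when $K=\bigcup_{j=1}^m K^{(j)}$ with each $K^{(j)}$ thin, the natural move is to pick for each $j$ graphs $S^{(j)}_1,\dots,S^{(j)}_n$ over $K^{(j)}$ with disjoint ranges and then set $S_i=\bigcup_j S^{(j)}_i$; the ranges $r(S_i)=\bigcup_j r(S^{(j)}_i)$ need to be pairwise disjoint in $i$, which one arranges by first shrinking so that, across all $j$ simultaneously, the ranges used for different values of $i$ avoid each other — this is possible because $\raf\rtimes G$ has lots of compact open graphs and the relevant sets are clopen in the Cantor set $\op$. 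I would spell this regrouping out in one short paragraph. Condition 2 of Definition \ref{almostaf} we do not need to check separately, precisely because of the simplicity shortcut from Lemma \ref{minimalsemidirectgroupoid}; I would state that explicitly at the start so the reader sees why the proof is short.
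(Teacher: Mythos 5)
Your overall strategy is the paper's: reduce to Condition 1 of Definition \ref{almostaf} via Lemma \ref{minimalsemidirectgroupoid} and Phillips' simplicity criterion, decompose a compact $K$ in the complement as $\bigcup_{g\in G}K_g\times\{g\}$ using finiteness of $G$, invoke Putnam's observation that compact subsets of $\rp\setminus\raf$ land (under $r$ or $s$) inside some $r(L_r)$, and transfer thinness from $\raf$ to $\raf\rtimes G$ by noting that graphs in $\raf$ are graphs in the larger groupoid with the same unit space. All of that matches the paper and is correct.

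The one genuine gap is in how you handle the group twist in the source map. Because $s\bigl(((T,T-x),g)\bigr)=g^{-1}(T-x)$, you arrive at $s(K)\subset\bigcup_{g\in G}g^{-1}r(L_r)$ and then lean on ``a finite union of thin sets is thin.'' That is a lemma you would have to supply, and your sketch of it does not work as stated: setting $S_i=\bigcup_j S^{(j)}_i$ fails to produce a graph when the pieces $K^{(j)}$ overlap (a point of the overlap acquires one arrow from each $j$, so $s|_{S_i}$ is not injective), and the ranges $r(S^{(j)}_i)$, $r(S^{(j')}_{i'})$ for $j\neq j'$ and $i\neq i'$ can collide; ``shrinking'' is constrained because Definition \ref{thin} demands $s(S_k)=K$ exactly. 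The union is avoidable, in two ways. The paper's way: use the range map instead --- $r\bigl(((T,T-x),g)\bigr)=T$ carries no $g$-twist, so $K\subset L_r\times G=:M_r$ gives $r(K)\subset r(M_r)=r(L_r)$, a single thin set (and proving thinness of $r(K)$ for every compact $K$ in the complement is equivalent to the $s(K)$ version, by replacing $K$ with $K^{-1}$, which again lies in the complement). Staying with your source map: since $G$ commutes with $\omega$ it preserves each $\mathcal{R}_n$, hence $\raf$ and $\rp\setminus\raf$, and since $G\subset O(d,\re)$ preserves $\|x\|$, one gets $\alpha_{g^{-1}}(L_r)=L_r$, so $\bigcup_g g^{-1}r(L_r)=r(L_r)$ and again only one thin set appears. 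Either fix closes the gap; as written, the union step is the one place your argument is incomplete.
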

\begin{proof} By Lemma \ref{minimalsemidirectgroupoid}, $C^*_r(\rp\rtimes G)$ is simple. Thus by \cite{Ph05}, Proposition 2.13 it is enough to check Condition 1 of Definition \ref{almostaf}.

Note that $\raf$ is a subgroupoid of $\raf\rtimes G$, and so any graph in $\raf$ is also a graph in $\raf\rtimes G$. Hence if a compact set is thin for $\raf$, it must be thin for $\raf\rtimes G$. Consider the sets
\[
M_r = \{ ((T, T-x), g)\in (\rp\rtimes G  \setminus \raf\rtimes G) \mid \|x\|\leq r\}
\]
Referring to (\ref{Lr}), we have that $M_r = L_r\times G$. 
Notice that $r(M_r) = r(L_r)$. Suppose that $K\subset (\rp\rtimes G \setminus \raf\rtimes G)$ is compact. Then 
\[
K = \bigcup_{g\in G}K_g \hspace{1cm} \textnormal{ where } \hspace{1cm}K_g = K\cap(\rp \setminus \raf)\times\{g\}.
\]
Each of the $K_g$ is compact because $(\rp \setminus \raf)\times\{g\}$ is closed. If $\kappa_1: \rp\rtimes G\to\rp$ is the usual projection, then $\kappa_1(K_g)$ is compact, and hence included in $L_{r_g}$ for some $r_g$. Let $r = \max\{r_g\}$ and consider $M_r$. We have
\[
K = \bigcup_{g\in G}K_g \subset \bigcup_{g\in G}(L_{r_g}\times\{g\}) \subset \bigcup_{g\in G}(L_{r}\times\{g\}) = M_r
\]
giving us that $K\subset M_r$ and thus $r(K) \subset r(M_r) = r(L_r)$. Since $r(L_r)$ is thin for $\raf$, $r(K)$ must also be thin for $\raf$ and hence for $\raf\rtimes G$. Thus Condition 1 of Definition \ref{almostaf} is satisfied, and we have that $\rp\rtimes G$ is an almost AF Cantor groupoid.
\end{proof}

\begin{corollary}
Suppose that $G$ is a finite symmetry group for $(\mathcal{P},\omega)$ acting freely on $\mathcal{P}$. Then the C*-algebra $C^*_r(\rp\rtimes G)$ has real rank zero, stable rank one, and order on its projections is determined by traces.
\end{corollary}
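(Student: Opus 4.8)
The plan is to deduce this corollary directly from the structural result just established, namely Theorem \ref{RpxGalmostAF}, together with the transfer results recorded earlier. First I would invoke Theorem \ref{RpxGalmostAF}: under the hypothesis that $G$ is a finite symmetry group acting freely on $\mathcal P$, the groupoid $\rp\rtimes G$ is an almost AF Cantor groupoid with respect to the AF subgroupoid $\raf\rtimes G$ (which is genuinely AF by Lemma \ref{semidirectofafisaf}). Next I would note that $C^*_r(\rp\rtimes G)$ is simple, which is exactly the content of Lemma \ref{minimalsemidirectgroupoid}. With these two facts in hand, all four conclusions of Theorem \ref{phillipslist} apply verbatim to $\g = \rp\rtimes G$: in particular $C^*_r(\rp\rtimes G)$ has real rank zero (item 1), stable rank one (item 2), and the order on $K_0(C^*_r(\rp\rtimes G))$ — equivalently the order on equivalence classes of projections — is determined by traces (item 3).

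The only mild subtlety is the phrase ``order on its projections is determined by traces'' in the corollary versus ``order on $K_0$ is determined by traces'' in Theorem \ref{phillipslist}; these are the same statement since in a C*-algebra with real rank zero and stable rank one (both of which we have just established) the Murray--von Neumann semigroup embeds into $K_0$ and the comparison of projections is governed precisely by the order on $K_0$. So no extra work is needed beyond citing Theorem \ref{phillipslist}. Optionally one could also remark, using Proposition \ref{semidirectcrossedproduct}, that $C^*_r(\rp\rtimes G)\cong \Aw\rtimes_r G$, so that these properties hold for the reduced crossed product of the tiling C*-algebra by $G$; this is presumably the intended reading of the statement.

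I do not anticipate a genuine obstacle here — the corollary is a bookkeeping consequence of Theorem \ref{RpxGalmostAF}, Lemma \ref{minimalsemidirectgroupoid}, and Theorem \ref{phillipslist}. If anything, the step requiring the most care is simply making sure the hypotheses of Theorem \ref{phillipslist} are literally met, i.e.\ that $\raf\rtimes G$ really is an \emph{AF} (not merely almost AF) groupoid — but that was handled in Lemma \ref{semidirectofafisaf}. Accordingly I would write the proof as a two-line citation.

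\begin{proof}
By Theorem \ref{RpxGalmostAF}, $\rp\rtimes G$ is an almost AF Cantor groupoid with respect to the AF subgroupoid $\raf\rtimes G$, and by Lemma \ref{minimalsemidirectgroupoid} the C*-algebra $C^*_r(\rp\rtimes G)$ is simple. Hence Theorem \ref{phillipslist} applies with $\g = \rp\rtimes G$ and $\g_0 = \raf\rtimes G$, giving that $C^*_r(\rp\rtimes G)$ has real rank zero, has stable rank one, and has order on $K_0$ (equivalently, on equivalence classes of projections, since the algebra has real rank zero and stable rank one) determined by traces. By Proposition \ref{semidirectcrossedproduct} this C*-algebra is isomorphic to $\Aw\rtimes_r G$.
\end{proof}
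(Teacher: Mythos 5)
Your proof is correct and matches the paper's, which simply cites Theorem \ref{RpxGalmostAF} together with Theorem \ref{phillipslist}; you are in fact slightly more careful in explicitly noting that the simplicity hypothesis of Theorem \ref{phillipslist} is supplied by Lemma \ref{minimalsemidirectgroupoid}. The final sentence invoking Proposition \ref{semidirectcrossedproduct} is the content of the paper's subsequent corollary about $\Aw\rtimes G$, so it is harmless but not needed here.
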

\begin{proof}
This follows from the above theorem together with Theorem \ref{phillipslist}.
\end{proof}

\begin{corollary}
Suppose that $G$ is a finite symmetry group for $(\mathcal{P},\omega)$ acting freely on $\mathcal{P}$. Then the C*-algebra $\Aw\rtimes G$ has real rank zero, stable rank one, and order on its projections is determined by traces.
\end{corollary}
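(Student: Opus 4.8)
The plan is to reduce the statement about $\Aw\rtimes G$ to the already-established statement about $C^*_r(\rp\rtimes G)$ via Proposition~\ref{semidirectcrossedproduct}. First I would recall that $\Aw$ is by definition $C^*_r(\rp)$, and that the action $\alpha$ of $G$ on $\rp$ from Section~\ref{crossedproductsection} induces, by part (1) of Proposition~\ref{semidirectcrossedproduct}, an action $\tilde\alpha$ of $G$ on $C^*_r(\rp) = \Aw$; this is precisely the action of the symmetry group $G$ on $\Aw$ referred to in the statement. Then part (2) of Proposition~\ref{semidirectcrossedproduct} gives a $*$-isomorphism
\[
\Aw\rtimes_{\tilde\alpha, r} G = C^*_r(\rp)\rtimes_{\tilde\alpha, r} G \;\cong\; C^*_r(\rp\rtimes_\alpha G).
\]

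Next I would invoke the previous corollary (equivalently, Theorem~\ref{RpxGalmostAF} together with Theorem~\ref{phillipslist}): since $G$ is a finite symmetry group acting freely on $\mathcal{P}$, the groupoid $\rp\rtimes G$ is an almost AF Cantor groupoid with simple reduced C*-algebra, hence $C^*_r(\rp\rtimes G)$ has real rank zero, stable rank one, and order on $K_0$ (in particular on projections) determined by traces. Transporting these properties across the isomorphism $\Phi$ of Proposition~\ref{semidirectcrossedproduct} — all three are isomorphism invariants of C*-algebras — yields the same conclusions for $\Aw\rtimes G$, provided one is careful that the crossed product appearing in the statement is the reduced one; for a finite group the reduced and full crossed products coincide, so there is no ambiguity, but I would remark on this to be safe.

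I do not expect any genuine obstacle here: the corollary is a formal consequence of the two results it cites, and the only thing to check is the bookkeeping identification of the crossed product $\Aw\rtimes G$ with $C^*_r(\rp\rtimes G)$. The mildly delicate point — if anything — is to make explicit that the $G$-action on $\Aw$ implicit in the statement ``$\Aw\rtimes G$'' is exactly $\tilde\alpha$ and not some other action, which is immediate once one unwinds the definitions, since $G$ acts on the tiling system and hence on the groupoid $\rp$, and the induced action on functions is $\tilde\alpha_g(f)(\gamma) = f(\alpha_g^{-1}(\gamma))$ as recorded in Proposition~\ref{semidirectcrossedproduct}(1). So the proof is essentially one line: combine the preceding corollary with Proposition~\ref{semidirectcrossedproduct}, noting $\Aw = C^*_r(\rp)$ and that $G$ is finite.
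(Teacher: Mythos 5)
Your argument is correct and is exactly the paper's proof: the paper also deduces this corollary from the preceding one via the isomorphism $\Aw\rtimes G \cong C^*_r(\rp\rtimes G)$ of Proposition \ref{semidirectcrossedproduct}. Your additional remarks on identifying the action as $\tilde\alpha$ and on the coincidence of full and reduced crossed products for finite $G$ are sensible bookkeeping but do not change the route.
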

\begin{proof}
This follows from the above corollary together with Proposition \ref{semidirectcrossedproduct}.
\end{proof}

\section{Traces on $\Aw\rtimes G$}\label{tracesection}
We now show that the AF algebra $C^*_r(\raf\rtimes G) \cong AF_\omega\rtimes G$ has a unique trace; this will imply that $\Aw\rtimes G$ has unique trace by Theorem \ref{phillipslist}. To do this, we show that $AF_\omega\rtimes G = \overline{\cup A_n\rtimes G}$ and that the matrix of partial multiplicities for the inclusion $A_n\rtimes G \hookrightarrow A_{n+1}\rtimes G$ is primitive and does not depend on $n$. 

We first describe the sort of finite group actions we encounter when studying the crossed product of $AF_\omega$ by a finite symmetry group $G$.
\begin{defn}\label{transpermute}
Let $n\geq 1$ and $k\geq 2$ be integers and let $A$ be the finite dimensional algebra
\[
A = \bigoplus_{i=1}^n\mathbb{M}_k = C(I) \otimes \mathbb{M}_k
\]
where $I = \{1, 2, \dots, n\}$. Let $G$ be a finite group and let $\alpha: G\to \textnormal{Aut}(A)$ be a homomorphism. Then we say $\alpha$ {\em (freely and) transitively permutes the summands of $A$} if the restriction of $\alpha$ on $C(I)$ acts by (freely and) transitively permuting $I$.
\end{defn}
For $A$ as in Definition \ref{transpermute}, let $q_i = \chi_{\{i\}}\otimes 1$. We note that if $\alpha$ freely and transitively permutes the summands of $A$, then $\#G = n$ necessarily, with $G = \{e=g_1, g_2,\dots, g_n\}$.
\begin{lemma}
Suppose that $A$, $G$ and $\alpha$ are as in Definition \ref{transpermute}, and that $\alpha$ freely and transitively permutes the summands of $A$. Then there exists a $*$-isomorphism $$\Phi:\mathbb{M}_{\#G}\otimes q_1A\to A\rtimes_\alpha G$$
such that $$e_{ij}\otimes q_1a \mapsto q_i\alpha_{g_i}(a)\delta_{g_ig_{j}^{-1}}.$$
\end{lemma}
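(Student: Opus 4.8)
The plan is to exhibit the map $\Phi$ explicitly on the spanning set $\{e_{ij}\otimes q_1 a : 1\le i,j\le n,\ a\in q_1 A\}$ (extended linearly), and then to check that it is a well-defined $*$-homomorphism and a bijection. First I would pin down notation: write $G=\{e=g_1,g_2,\dots,g_n\}$, and observe that since $\alpha$ freely and transitively permutes the summands, for each $i$ there is exactly one group element $g_i$ with $\alpha_{g_i}(q_1)=q_i$; in particular $\alpha_{g_i}$ restricts to a $*$-isomorphism $q_1A\to q_iA$, and $\alpha_{g_ig_j^{-1}}(q_j)=q_i$. This makes the formula $e_{ij}\otimes q_1a\mapsto q_i\alpha_{g_i}(a)\delta_{g_ig_j^{-1}}$ unambiguous: $\alpha_{g_i}(a)\in q_iA$, so $q_i\alpha_{g_i}(a)=\alpha_{g_i}(a)$, and it lives in the $\delta_{g_ig_j^{-1}}$-component of $A\rtimes_\alpha G$.

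Next I would verify that $\Phi$ is multiplicative and $*$-preserving on the matrix units. For multiplicativity, compute $\Phi(e_{ij}\otimes q_1a)\,\Phi(e_{kl}\otimes q_1b)$ using the crossed-product rule $\delta_g c\delta_{g^{-1}}=\alpha_g(c)$, i.e. $(x\delta_g)(y\delta_h)=x\alpha_g(y)\delta_{gh}$. One gets $\alpha_{g_i}(a)\delta_{g_ig_j^{-1}}\cdot\alpha_{g_k}(b)\delta_{g_kg_l^{-1}} = \alpha_{g_i}(a)\,\alpha_{g_ig_j^{-1}}(\alpha_{g_k}(b))\,\delta_{g_ig_j^{-1}g_kg_l^{-1}}$. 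The factor $\alpha_{g_ig_j^{-1}}(\alpha_{g_k}(b))$ lies in $\alpha_{g_ig_j^{-1}g_k}(q_1A)=\alpha_{g_ig_j^{-1}}(q_kA)$, which meets $q_iA$ (the support of $\alpha_{g_i}(a)$) nontrivially only when $g_ig_j^{-1}g_k\in\{g_ig_j^{-1}g_k\}$ equals some $g_i$, i.e. when $g_j=g_k$, i.e. $j=k$; in that case $g_ig_j^{-1}g_kg_l^{-1}=g_ig_l^{-1}$ and the product collapses to $\alpha_{g_i}(a)\alpha_{g_i}(b)\delta_{g_ig_l^{-1}}=\alpha_{g_i}(ab)\delta_{g_ig_l^{-1}}=\Phi(e_{il}\otimes q_1(ab))=\Phi((e_{ij}\otimes q_1a)(e_{kl}\otimes q_1b))$, while for $j\neq k$ both sides are $0$. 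For the $*$-structure, use $(x\delta_g)^*=\delta_{g^{-1}}x^*=\alpha_{g^{-1}}(x^*)\delta_{g^{-1}}$ to get $\Phi(e_{ij}\otimes q_1a)^* = \alpha_{(g_ig_j^{-1})^{-1}}(\alpha_{g_i}(a)^*)\delta_{g_jg_i^{-1}} = \alpha_{g_j}(a^*)\delta_{g_jg_i^{-1}} = \Phi(e_{ji}\otimes q_1a^*) = \Phi((e_{ij}\otimes q_1a)^*)$, using $(g_ig_j^{-1})^{-1}g_i=g_j$.

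Finally I would argue bijectivity. Surjectivity: a general element of $A\rtimes_\alpha G$ is a sum $\sum_h c_h\delta_h$ with $c_h\in A$, and writing $c_h=\sum_i q_i c_h$ with $q_ic_h=\alpha_{g_i}(a_{h,i})$ for a unique $a_{h,i}\in q_1A$, and solving $g_ig_j^{-1}=h$ for $j$ given $i$ and $h$ (namely $g_j = h^{-1}g_i$, which determines $j$ uniquely since the $g_i$ enumerate $G$), we see every summand $\alpha_{g_i}(a)\delta_h$ with $h=g_ig_j^{-1}$ is $\Phi(e_{ij}\otimes q_1a)$; hence $\Phi$ is onto. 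Injectivity then follows by a dimension count: both $\mathbb{M}_{\#G}\otimes q_1A\cong\mathbb{M}_n\otimes\mathbb{M}_k$ and $A\rtimes_\alpha G$ have complex dimension $n^2k^2$ (the crossed product by a finite group of order $n$ of an algebra of dimension $nk^2$), so a surjective linear map between them is bijective; alternatively injectivity is automatic since $\mathbb{M}_{\#G}\otimes q_1A$ is simple and $\Phi$ is a nonzero $*$-homomorphism. I expect the only mildly delicate point to be bookkeeping the indices $i,j$ versus group elements $g_ig_j^{-1}$ consistently — in particular checking that $j\mapsto g_ig_j^{-1}$ is a bijection $I\to G$ for each fixed $i$ — but this is routine once the free transitive permutation hypothesis is used to identify each $q_i$ with a unique $g_i$.
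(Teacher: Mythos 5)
Your argument is correct and is exactly the ``straightforward verification'' that the paper leaves to the reader: you check multiplicativity and the $*$-condition on the spanning set $e_{ij}\otimes q_1a$ using the crossed-product relations, and get bijectivity from surjectivity plus a dimension count (or simplicity of $\mathbb{M}_{\#G}\otimes q_1A$). The only blemish is a garbled clause in the multiplicativity step (``$g_ig_j^{-1}g_k\in\{g_ig_j^{-1}g_k\}$ equals some $g_i$''), but the intended condition $g_ig_j^{-1}g_k=g_i$, hence $j=k$, is clearly what you use and is right.
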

\begin{proof}
This is straightforward verification.
\end{proof}

Recall that 
\[
AF_\omega = \overline{\bigcup_{n\in\NN}A_n},
\]
where
\[
A_n = \bigoplus_{p\in\mathcal P} A_{n,p},
\]
with
\begin{eqnarray*}
A_{n,p}&=& \textnormal{span}_{\mathbb{C}}\, \{ e_p^n(x,y)\mid x,y\in \textnormal{Punc}(n,p)\}\\
       &\cong&\mathbb{M}_{\#\text{Punc}(n,p)}.
\end{eqnarray*}
Let $G$ be a finite symmetry group for $(\mathcal{P},\omega)$, and as before let $\tilde\alpha$ denote the action induced by $G$. For $g\in G$ we have
\begin{eqnarray*}
\tilde\alpha_g(e^n_p(x,y))(T, T') &=& e^n_p(x,y)(g^{-1}T, g^{-1}T')\\
                     &=& \left\{\begin{array}{ll} 1 \hspace{0.5cm}& (g^{-1}T, g^{-1}T') \in E^n_p(x,y)\\ 0 & \textnormal{otherwise}\end{array}\right. \\
                     &=& \left\{\begin{array}{ll} 1 \hspace{0.5cm}& (g^{-1}T, g^{-1}T') = (\omega^n(S) - x, \omega^n(S) - y), \\
                                        & \hspace{0.3cm}S\in U(\{p\}, p)\\ 0 & \textnormal{otherwise}\end{array}\right. \\
                     &=& \left\{\begin{array}{ll} 1 \hspace{0.5cm}& (T, T') = (\omega^n(gS) - gx, \omega^n(gS) - gy), \\
                                        & \hspace{0.3cm}gS\in U(\{gp\}, gp)\\ 0 & \textnormal{otherwise}\end{array}\right. \\
                     &=& e^n_{gp}(gx, gy)(T,T').
\end{eqnarray*}
By definition of $A_{n,p}$, we then have $\tilde\alpha_g(A_{n,p}) = A_{n,gp}$. Let $\mathcal{S}_G$ be a set of standard position prototiles for $G$ and assume that $G$ acts freely on $\mathcal{P}$. Because $\mathcal{P} = G\mathcal{S}_G$, we have
\[
A_n = \bigoplus_{p\in\mathcal{S}_G} \left(\oplus_{g\in G} A_{n, gp}\right) \cong \bigoplus_{p\in\mathcal{S}_G}\left( C(Gp)\otimes \mathbb{M}_{\#\text{Punc}(n,p)}\right),
\]
where $Gp$ is the finite set $\{gp\mid g\in G\}$. Let $B_{n,p} = \oplus_{g\in G} A_{n, gp} \cong C(Gp)\otimes \mathbb{M}_{\#\text{Punc}(n,p)}$. The action $\tilde\alpha$ is free and transitive on the set $Gp$, that is, $\tilde\alpha$ restricted to each of the $B_{n,p}$ transitively permutes the summands of $B_{n,p}$ in the sense of Definition \ref{transpermute}. Hence we have
\begin{eqnarray*}
A_n\rtimes_{\tilde\alpha} G &=& \left(\bigoplus_{p\in\mathcal{S}_G} B_{n,p}\right)\rtimes_{\tilde\alpha} G\\
             &=& \bigoplus_{p\in\mathcal{S}_G} \left(B_{n,p}\rtimes_{\tilde\alpha} G\right)\\
             &\cong& \bigoplus_{p\in\mathcal{S}_G}\mathbb{M}_{\#G\cdot\textnormal{Punc}(n,p)}.
\end{eqnarray*}

\begin{theorem}\label{freeafcrossedproducts}
Let $G$ be a symmetry group for $(\mathcal{P},\omega)$ which acts freely on $\mathcal P$. Then $$AF_\omega\rtimes G \cong \overline{\bigcup_{n\in\NN} A_n \rtimes G}.$$ The number of summands in the finite dimensional algebras $A_n\rtimes G$ is the number of elements of $\mathcal{S}_G$, and if $M$ is the incidence matrix for the unital inclusion $A_n \rtimes G \subset A_{n+1} \rtimes G$, then $M_{ij}$ is the number of images of $p_j$ under the action of $\re^d\rtimes G$ in $\omega(p_i)$.
\end{theorem}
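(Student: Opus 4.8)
The plan is to combine the structural description of $A_n\rtimes G$ obtained just above the statement with the trace formula (\ref{incidenceformula}) for the partial multiplicities of an inclusion of finite-dimensional C*-algebras. For the identification $AF_\omega\rtimes G\cong\overline{\bigcup_n A_n\rtimes G}$, observe that each $A_n$ is a unital $G$-invariant C*-subalgebra of $AF_\omega$ — the computation preceding the statement gives $\tilde\alpha_g(A_{n,p})=A_{n,gp}$, hence $\tilde\alpha_g(A_n)=A_n$ — and that $A_n\subset A_{n+1}$; since $G$ is finite these equivariant unital inclusions induce unital inclusions $A_n\rtimes G\subset A_{n+1}\rtimes G\subset AF_\omega\rtimes G$. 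As $\bigcup_n A_n$ is dense in $AF_\omega$, the $*$-algebra $\bigcup_n(A_nG)$ is dense in $(AF_\omega)G$ and hence in $AF_\omega\rtimes G$; since $\bigcup_n(A_nG)\subset\bigcup_n(A_n\rtimes G)$, the claim follows. The statement about the number of summands is then immediate from the isomorphism $A_n\rtimes_{\tilde\alpha}G\cong\bigoplus_{p\in\mathcal{S}_G}\mathbb{M}_{\#G\cdot\#\mathrm{Punc}(n,p)}$ established above, whose summands are indexed by $\mathcal{S}_G$.

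For the incidence matrix, fix an enumeration $\mathcal{S}_G=\{p_1,\dots,p_m\}$; write $q^{(n)}_j=1_{B_{n,p_j}}\delta_e$ for the unit of the $p_j$-summand $B_{n,p_j}\rtimes G$ of $A_n\rtimes G$, and let $\tau^n_j$ be the ordinary matrix trace on that summand. By (\ref{incidenceformula}), $M_{ij}=\tau^{n+1}_i(q^{(n)}_j)/\tau^n_j(q^{(n)}_j)$, where on the right $q^{(n)}_j$ is regarded inside $A_{n+1}\rtimes G$ via the inclusion; the denominator is $\#G\cdot\#\mathrm{Punc}(n,p_j)$. The key point is the following description of $\tau^{n+1}_i$ on the copy of $A_{n+1}$ embedded in $A_{n+1}\rtimes G$ as $a\mapsto a\delta_e$: writing $a_q\in A_{n+1,q}$ for the components of $a\in A_{n+1}$,
\[
\tau^{n+1}_i(a\delta_e)=\sum_{q\in Gp_i}\mathrm{Tr}_{A_{n+1,q}}(a_q),
\]
i.e.\ the sum of the ordinary matrix traces of the components of $a$ over the $G$-orbit $Gp_i\subset\p$. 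This may be read off from the explicit $*$-isomorphism $\Phi:\mathbb{M}_{\#G}\otimes A_{n+1,p_i}\to B_{n+1,p_i}\rtimes G$ of the preceding lemma — one finds $\Phi^{-1}(a_q\delta_e)=e_{g,g}\otimes\tilde\alpha_g^{-1}(a_q)$ when $q=gp_i$, while $\mathrm{Tr}\otimes\mathrm{Tr}$ is the matrix trace on the tensor product and each $\tilde\alpha_g$ is trace-preserving — or, more abstractly, $\tau^{n+1}_i|_{A_{n+1}}$ is a $G$-invariant trace supported on the orbit $Gp_i$, normalized by $\tau^{n+1}_i(1_{A_{n+1,p_i}}\delta_e)=\#\mathrm{Punc}(n+1,p_i)$.

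Now evaluate. Write $m_{qr}$ for the number of translates of a prototile $r$ in $\omega(q)$ — equivalently the multiplicity of $A_{n,r}$ in $A_{n+1,q}$ under $A_n\subset A_{n+1}$ — and note that the component in $A_{n+1,q}$ of $1_{A_{n,hp_j}}$ has rank $m_{q,hp_j}\,\#\mathrm{Punc}(n,hp_j)=\#\mathrm{Punc}(n,p_j)\,m_{q,hp_j}$, using $\#\mathrm{Punc}(n,hp_j)=\#\mathrm{Punc}(n,p_j)$ since $h$ is an isometry carrying $\omega^n(p_j)$ onto $\omega^n(hp_j)$. Since $q^{(n)}_j=\bigl(\sum_{h\in G}1_{A_{n,hp_j}}\bigr)\delta_e$, the displayed formula gives
\[
M_{ij}=\frac{1}{\#G\cdot\#\mathrm{Punc}(n,p_j)}\sum_{q\in Gp_i}\sum_{h\in G}\#\mathrm{Punc}(n,p_j)\,m_{q,hp_j}=\frac{1}{\#G}\sum_{q\in Gp_i}\sum_{h\in G}m_{q,hp_j}.
\]
Because $\omega$ commutes with $G$ we have $m_{gp,gp'}=m_{p,p'}$, so for each fixed $h$ the reindexing $q=gp_i\mapsto h^{-1}gp_i$ gives $\sum_{q\in Gp_i}m_{q,hp_j}=\sum_{q\in Gp_i}m_{q,p_j}$, independent of $h$; the double sum therefore collapses to $M_{ij}=\sum_{q\in Gp_i}m_{q,p_j}=\sum_{g\in G}m_{p_i,gp_j}$. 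This last quantity is exactly the number of tiles of $\omega(p_i)$ of the form $gp_j+x$ with $g\in G$ and $x\in\rd$ — i.e.\ the number of images of $p_j$ under $\rd\rtimes G$ occurring in $\omega(p_i)$ (freeness of the action of $G$ on $\p$ makes $g$ unique) — and it is manifestly independent of $n$.

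The routine parts are the first paragraph and the verification of the displayed formula for $\tau^{n+1}_i$. The one place to take care is the bookkeeping in the last paragraph: pushing the matrix trace through the crossed-product isomorphism, and then collapsing the double sum over $G$ using the equivariance relations $m_{gp,gp'}=m_{p,p'}$ and $\#\mathrm{Punc}(n,gp)=\#\mathrm{Punc}(n,p)$. An alternative would be to describe the inclusion $A_n\rtimes G\hookrightarrow A_{n+1}\rtimes G$ directly on the generators $e^n_p(x,y)\delta_g$, but the trace computation seems shorter.
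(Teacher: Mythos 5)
Your proposal is correct and follows essentially the same route as the paper: the direct-limit/density argument for the isomorphism, then the partial-multiplicity formula (\ref{incidenceformula}) applied to the summand traces of $A_n\rtimes G$, with the same double sum over $G$ collapsed via the equivariance $\omega(gp)=g\omega(p)$. The only differences are presentational — you justify explicitly that the summand trace on $A_{n+1}\rtimes G$ restricted to $A_{n+1}\delta_e$ is the sum of matrix traces over the $G$-orbit, which the paper simply takes as the definition of $\tau^{A_{n+1}\rtimes G}_{p_i}$.
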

\begin{proof}
The isomorphism follows from the fact that crossed products commute with direct limits, and the second statement is by the discussion directly above the theorem.

We denote the inclusion of $A_n$ in $A_{n+1}$ by $\iota$ and the induced inclusion from $A_n\rtimes G$ to $A_{n+1}\rtimes G$ by $I$, that is, $I(a\delta_g) = \iota(a)\delta_g$. We now find the incidence matrix of the inclusions. To do this, we use Equation (\ref{incidenceformula}). Let $q_{n,p}$ denote the identity of $A_{n,p}$. Then the identity of the $p$th summand of $A_n\rtimes G$ is 
\[
\sum_{g\in G} q_{n,gp}\delta_e.
\]
The trace on the $p$th summand is
\[
\tau^{A_n\rtimes G}_p(a\delta_g) = \left\{\begin{array}{ll}\textnormal{Tr}\left(a\sum_{h\in G}q_{n, hp}\right)&  \textnormal{ if } g = e\\0 & \textnormal{ otherwise.}\end{array}\right.
\]
And we have
\begin{eqnarray*}
I\left(\sum_{g\in G} q_{n,gp}\delta_e\right) &=& \iota\left(\sum_{g\in G} q_{n,gp}\right)\delta_e\\
                                             &=& \sum_{g\in G}\iota\left(q_{n,gp}\right)\delta_e
\end{eqnarray*}
Thus for $p_i, p_j\in \mathcal{S}_G$ we have
\begin{eqnarray*}
\tau^{A_{n+1}\rtimes G}_{p_i}\circ I\left(\sum_{g\in G} q_{n,gp_j}\delta_e\right) 
            &=& \tau^{A_{n+1}\rtimes G}_{p_i}\left(\sum_{g\in G} \iota(q_{n,gp})\delta_e\right)\\
            &=& \textnormal{Tr}\left(\left(\sum_{h\in G} q_{n+1, hp_i} \right)\left(\sum_{g\in G} \iota(q_{n, gp_j}) \right)\right)\\
            &=& \sum_{h\in G}\sum_{g\in G} \textnormal{Tr}\left(q_{n+1, hp_i}\iota(q_{n, gp_j})\right)
\end{eqnarray*}
The term $\textnormal{Tr}\left(q_{n+1, hp_i}\iota(q_{n, gp_j})\right)$ is the number of translates of $gp_j$ in $\omega(hp_i)$, by the discussion in Section \ref{tilingalgebra} after Equation (\ref{identity}). Hence
\begin{eqnarray*}
\tau^{A_{n+1}\rtimes G}_{p_i}\circ I\left(\sum_{g\in G} q_{n,gp_j}\delta_e\right) 
            &=& \sum_{h\in G}\sum_{g\in G}\#\textnormal{Punc}(n,p_j)\left(\begin{array}{c}\textnormal{\# of translates of}\\gp_j\textnormal{ in }\omega(hp_i)\end{array}\right)\\
            &=& \#\textnormal{Punc}(n,p_j)\sum_{h\in G}\sum_{g\in G}\left(\begin{array}{c}\textnormal{\# of translates of}\\h^{-1}gp_j\textnormal{ in }\omega(p_i)\end{array}\right)\\
\end{eqnarray*}
For fixed $h$, $\sum_{g\in G}\left(\#\textnormal{ of translates of }h^{-1}gp_j\textnormal{ in }\omega(p_i)\right)$ is the number of images of $p_j$ under the action of $\re^d\rtimes G$ in $\omega(p_i)$. Hence
\begin{eqnarray*}
\tau^{A_{n+1}\rtimes G}_{p_i}\circ I\left(\sum_{g\in G} q_{n,gp_j}\delta_e\right) 
            &=& \#\textnormal{Punc}(n,p_j)\sum_{h\in G}\left(\begin{array}{c}\textnormal{\# of images of }p_j\textnormal{ under the}\\\textnormal{action of }\re^d\rtimes G\textnormal{ in }\omega(p_i)\end{array}\right)\\
            &=& \#G\#\textnormal{Punc}(n,p_j)\left(\begin{array}{c}\textnormal{\# of images of }p_j\textnormal{ under the}\\\textnormal{action of }\re^d\rtimes G\textnormal{ in }\omega(p_i)\end{array}\right)\\
\end{eqnarray*}
On the other hand, 
\[
\tau^{A_n\rtimes G}_{p_j}\left(\sum_{g\in G} q_{n,gp_j}\delta_e\right) = \#G\#\textnormal{Punc}(n,p_j)
\]
because $\sum_{g\in G} q_{n,gp_j}\delta_e$ is the identity on the $p_j$th summand, $\tau^{A_n\rtimes G}_{p_j}$ is the matrix trace restricted to the $p_j$th summand, and the size of the $p_j$th summand is $\#G\#\textnormal{Punc}(n,p_j)$. Hence
\[
\frac{\tau^{A_{n+1}\rtimes G}_{p_i}\circ I\left(\sum_{g\in G} q_{n,gp_j}\delta_e\right)}{\tau^{A_n\rtimes G}_{p_j}\left(\sum_{g\in G} q_{n,gp_j}\delta_e\right)} = \left(\begin{array}{c}\textnormal{\# of images of }p_j\textnormal{ under the}\\\textnormal{action of }\re^d\rtimes G\textnormal{ in }\omega(p_i)\end{array}\right).
\]
Thus by Equation (\ref{incidenceformula}), the incidence matrix of the inclusion is as given in the statement of the theorem.
\end{proof}
Notice that the incidence matrix does not depend on $n$. In fact, it is the same for each inclusion, just as it is for $AF_\omega$. Primitivity of the substitution implies primitivity of the incidence matrix for $AF_\omega\rtimes G$.
\begin{corollary}\label{uniquetracecor}
Let $G$ be a finite symmetry group for $(\mathcal{P},\omega)$ such that $G$ acts freely on the prototiles. Then both $AF_\omega\rtimes G$ and $\Aw\rtimes G$ have unique traces.
\end{corollary}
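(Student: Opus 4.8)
The plan is to reduce both statements to the AF case via the structural results already assembled. First I would invoke Theorem \ref{freeafcrossedproducts}, which identifies $AF_\omega\rtimes G$ with the AF algebra $\overline{\bigcup_{n\in\NN} A_n\rtimes G}$ whose connecting maps all carry the \emph{same} matrix of partial multiplicities $M$, where $M_{ij}$ is the number of images of $p_j$ under $\re^d\rtimes G$ inside $\omega(p_i)$. Because $\omega$ is primitive (Assumption \ref{primitive}), some power of the ordinary substitution matrix has strictly positive entries, and every translate of $p_j$ in $\omega^k(p_i)$ is in particular an image of $p_j$ under $\re^d\rtimes G$, so the corresponding power of $M$ also has strictly positive entries; hence $M$ is primitive. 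An AF algebra built from a single constant primitive connecting matrix has a unique tracial state by \cite{Ha81}, Theorem 4.1, so $AF_\omega\rtimes G$ has a unique trace.

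Next, for $\Aw\rtimes G$ I would combine Proposition \ref{semidirectcrossedproduct} and Theorem \ref{RpxGalmostAF}. The former gives $\Aw\rtimes G\cong C^*_r(\rp\rtimes G)$, and applied to $\raf$ it gives $AF_\omega\rtimes G\cong C^*_r(\raf\rtimes G)$; the latter says $\rp\rtimes G$ is an almost AF Cantor groupoid with respect to the AF subgroupoid $\raf\rtimes G$, and (through Lemma \ref{minimalsemidirectgroupoid}) that $C^*_r(\rp\rtimes G)$ is simple. Theorem \ref{phillipslist}(4) then supplies a bijection between the normalized traces on $C^*_r(\raf\rtimes G)$ and those on $C^*_r(\rp\rtimes G)$. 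Since $\Aw$ is unital and $G$ is finite, $\Aw\rtimes G$ is unital, so a trace means a tracial state, and uniqueness on the AF side transfers to $\Aw\rtimes G$.

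I do not anticipate a genuine obstacle: the corollary is essentially bookkeeping on top of Theorems \ref{freeafcrossedproducts}, \ref{phillipslist}, \ref{RpxGalmostAF} and Proposition \ref{semidirectcrossedproduct}. The only point needing a line of care is the primitivity of the constant connecting matrix $M$ for $AF_\omega\rtimes G$ — one must observe that passing from ``number of translates of $p_j$'' to ``number of images of $p_j$ under $\re^d\rtimes G$'' can only enlarge the entries, so primitivity of the ordinary substitution matrix is inherited by $M$. Everything else is a direct application of the cited results.
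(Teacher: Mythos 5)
Your proposal is correct and follows essentially the same route as the paper: the paper's proof also deduces uniqueness of the trace on $AF_\omega\rtimes G$ from the constant primitive incidence matrix of Theorem \ref{freeafcrossedproducts} via \cite{Ha81}, Theorem 4.1, and then transfers it to $\Aw\rtimes G$ through Theorem \ref{phillipslist}. The only difference is that you spell out the primitivity of $M$ and the simplicity hypothesis explicitly, which the paper leaves implicit.
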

\begin{proof}
As stated before, that $AF_\omega\rtimes G$ has a unique tracial state is a general fact about AF algebra with constant primitive substitution matrix, again see \cite{Ha81}, Theorem 4.1. By Theorem \ref{phillipslist}, this implies that $\Aw\rtimes G$ has a unique trace.
\end{proof}
\begin{example} Penrose tiling, $G= D_{10}$.\label{penroseafincidence}

Referring to Figure \ref{penrosesub}, we set $\mathcal S_{D_{10}} = \{ \mathbf{1}, \mathbf{21}\}$. Then $\omega(\mathbf{1})$ contains one image each of $\mathbf{1}$ and $\mathbf{21}$ under the action of $\re^2\rtimes D_{10}$, and $\omega(\mathbf{21})$ contains one image of $\mathbf{1}$ and two images of $\mathbf{21}$ under the action of $\re^2\rtimes D_{10}$. Hence the incidence matrix for $AF_\omega \rtimes D_{10}$ is
\[
M = \left[ \begin{array}{cc}1&1\\1&2\end{array}\right].
\]
\end{example}
\begin{rmk}\label{connesremark}
Using standard methods (see \cite{Dav}, Example IV.3.5 for example), one finds from Example \ref{penroseafincidence} that $K_0(AF_\omega\rtimes D_{10})\cong \bz + \phi^{-1}\bz$, where $\phi$ is the golden ratio. This is in fact the same ordered group that Connes obtains in \cite{CoNCG}, Section 2.3 for an AF algebra arising from the space of Penrose tilings, and so his AF algebra must be isomorphic to $AF_\omega\rtimes D_{10}$. In his example, he considers a space homeomorphic to $\op$ and declares two tilings to be equivalent if one can be carried to the other by any isometry of the plane. It may seem odd that we get the same result, since one would imagine that equivalence by any isometry could be bigger than the equivalence relation $\raf \rtimes D_{10}$. However, it is a fact for Penrose tilings that if $(T, T')\in\rp\setminus\raf$, then there exists $g\in D_{10}$ such that $(T, gT')\in \raf$, see for example \cite{KelNCG}, Section 4.2.1. This means that $\raf \rtimes D_{10}$ is in fact equivalence by any isometry on $\op$.
\end{rmk}
\section{The Rokhlin Property and Weak Rokhlin Property}\label{RPsection}

In this section we prove that if $G$ is a symmetry group for $(\p,\omega)$ which acts freely on $\p$, then the induced action of $G$ on $AF_\omega$ has the Rokhlin property. We also show that the action of $G$ on $\Aw$ has the weak Rokhlin property. We first recall the relevant definitions. In what follows, $\mathfrak{T}(A)$ denotes the space of normalized traces on a C*-algebra $A$. 

\begin{defn}\label{rokdef}(\cite{Ph09}, Definition 2.1)
Let $A$ be a unital C*-algebra, and let $\alpha: G\to \textnormal{Aut}(A)$ be an action of a finite group $G$ on $A$. Then $\alpha$ has the {\em Rokhlin property} if for every finite set $\mathcal F\subset A$ and every $\varepsilon >0$ there are mutually orthogonal projections $e_g\in A$ for $g\in G$ such that:
\begin{enumerate}
\item $\left\|\alpha_g(e_h)- e_{gh}\right\| <\varepsilon$ for all $g, h\in G$.
\item $\left\|e_gf- fe_g\right\| < \varepsilon$ for all $g\in G$ and all $f\in \mathcal F$.
\item $\sum_{g\in G}e_g = 1$.
\end{enumerate}
The $(e_g)_{g\in G}$ are called a {\em family of Rokhlin projections} for $\alpha, \mathcal F$ and $\varepsilon$.
\end{defn}

\begin{defn}\label{WRP}(\cite{MaSa12}, Definition 2.7 (4)) 
Let $A$ be a unital simple C*-algebra such that $\mathfrak{T}(A)$ is nonempty, and let $\alpha: G \to$ Aut$(A)$ be an action of a finite group $G$ on $A$. Then $\alpha$ has the {\em weak Rokhlin property} if there exists a central sequence $(f_n)_{n\in\NN}$ in $A$ with $0\leq f_n \leq 1$, 
\[
\lim_{n\to \infty}\|\alpha_g(f_n)\alpha_h(f_n)\| = 0
\] 
for all $g,h\in G$ with $g\neq h$ and
\[
\lim_{n\to \infty}\max_{\tau\in \mathfrak T(A)}|\tau(f_n) - (\#G)^{-1}| = 0.
\]

\end{defn}
As stated in \cite{MaSa12}, this is a weakening of the Rokhlin property suited to cases where the algebra in question may not have many projections. The C*-algebras considered in this work have many projections, but as we will see we are only able to verify Definition \ref{WRP} for the case of a symmetry group acting on $\Aw$.

Our first result concerning the action of $G$ on $AF_\omega$ will follow from the following more general result:

\begin{proposition}\label{locrepfreeAF}
Suppose that $A = \overline{\cup A_n}$ is a unital AF algebra and that $\alpha: G\to$ Aut$(A)$ is an action of a finite group $G$ on $A$. Suppose that for each $n\in\NN$ there exists a positive integer $k(n)$ such that
\[
A_n = \bigoplus_{i\in I(n)}\left( \bigoplus_{g\in G} \mathbb{M}_{k(n)}\right)
\] 
and that the restriction of $\alpha$ freely and transitively permutes the summands of 

\noindent$\bigoplus_{g\in G} \mathbb{M}_{k(n)}$ in the sense of Definition \ref{transpermute}. Then $\alpha$ has the Rokhlin property. 
\end{proposition}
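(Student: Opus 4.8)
The idea is to build the Rokhlin projections directly inside $A_n$ for large enough $n$, exploiting the hypothesis that $\alpha$ permutes the $G$-many copies of $\mathbb{M}_{k(n)}$ in each block freely and transitively. Fix a finite set $\mathcal{F}\subset A$ and $\varepsilon>0$. First I would use density of $\bigcup A_n$ to find $n$ and a finite set $\mathcal{F}'\subset A_n$ with every element of $\mathcal{F}$ within $\varepsilon/3$ (say) of the corresponding element of $\mathcal{F}'$; then it suffices to produce projections in $A_n$ that commute with $\mathcal{F}'$ and satisfy conditions (1) and (3) on the nose (or nearly so), since conditions (2) will then hold for $\mathcal{F}$ up to $2\varepsilon/3<\varepsilon$.

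\textbf{Construction of the projections.} Write $A_n=\bigoplus_{i\in I(n)} B_i$ with $B_i=\bigoplus_{g\in G}\mathbb{M}_{k(n)}$, and for each $i$ let $\{q^{(i)}_g\}_{g\in G}$ be the minimal central projections of $B_i$, i.e. $q^{(i)}_g$ is the unit of the $g$-indexed summand of $B_i$. By hypothesis the restriction of $\alpha$ to $B_i$ permutes these summands in the sense of Definition~\ref{transpermute}: after relabelling we may arrange that $\alpha_g(q^{(i)}_h)=q^{(i)}_{gh}$ for all $g,h\in G$ and all $i$ (transitivity and freeness of the permutation action of $G$ on a $\#G$-element set is exactly the left-translation action). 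Now set
\[
e_g \;=\; \sum_{i\in I(n)} q^{(i)}_g \qquad (g\in G).
\]
These are projections in the center of $A_n$; they are mutually orthogonal because for each $i$ the $q^{(i)}_g$ are orthogonal; they sum to $\sum_i \sum_{g\in G} q^{(i)}_g = \sum_i 1_{B_i}=1_{A_n}=1_A$, giving condition (3); and $\alpha_g(e_h)=\sum_i \alpha_g(q^{(i)}_h)=\sum_i q^{(i)}_{gh}=e_{gh}$, giving condition (1) exactly (not merely approximately). Finally, each $e_g$ lies in the center of $A_n$, hence commutes exactly with $\mathcal{F}'\subset A_n$, so condition (2) holds for $\mathcal{F}'$ exactly and for $\mathcal{F}$ within $\varepsilon$. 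This verifies Definition~\ref{rokdef}.

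\textbf{On the identification of the permutation.} The one point deserving care is the claim that a free transitive action of $G$ on the index set $\{q^{(i)}_g : g\in G\}$ can be coherently relabelled, across all $i\in I(n)$ \emph{and} compatibly with $\alpha$ being a group homomorphism, so that it becomes the left-translation action. For a single block this is the standard fact that a free transitive $G$-set is isomorphic to $G$ with left translation: pick any summand, call its index $e$, and define the index of $\alpha_g$ applied to it to be $g$; freeness makes this well defined and transitivity makes it exhaustive, and homomorphism of $\alpha$ makes the resulting labelling equivariant. One then does this independently in each block $B_i$. Since the blocks $B_i$ are permuted among themselves only trivially (the hypothesis says $\alpha$ preserves each summand $\bigoplus_{g\in G}\mathbb{M}_{k(n)}$ of $A_n$), there is no interaction between blocks and the relabellings can be chosen independently. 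This is really the only non-formal step, and it is genuinely elementary; the rest is bookkeeping. I would also remark that because conditions (1) and (3) are achieved exactly here (the approximation only enters through passing from $\mathcal{F}$ to $\mathcal{F}'\subset A_n$), this argument in fact shows something marginally stronger than Definition~\ref{rokdef} demands.
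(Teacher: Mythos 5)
Your proposal is correct and follows the same route as the paper: the paper's proof simply asserts that the restricted action on each $A_n$ "clearly" has the Rokhlin property and then approximates the finite set by elements of some $A_n$, and your construction of the $e_g$ as sums of the (relabelled) minimal central projections of the blocks is exactly the content of that assertion, written out in full.
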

\begin{proof}
The action $\alpha$ restricted to each $A_n$ clearly has the Rokhlin property, and so it must have the Rokhlin property on their union because one may take the finite set to be contained in some $A_n$ (up to an $\eps > 0$).
\end{proof}
The following theorem is a corollary of Proposition \ref{locrepfreeAF}.
\begin{theorem}
Let $G$ be a symmetry group for $(\mathcal{P},\omega)$, and suppose that $G$ acts freely on $\mathcal{P}$. Then the action of $G$ on $AF_\omega$ has the Rokhlin property.
\end{theorem}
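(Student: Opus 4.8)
The theorem is essentially a repackaging of Proposition \ref{locrepfreeAF}, so the plan is to recognize $AF_\omega = \overline{\cup A_n}$ with its induced action $\tilde\alpha$ as an instance of the situation treated there and then quote that proposition. The key input is already recorded in Section \ref{tracesection}: the identity $\tilde\alpha_g(e^n_p(x,y)) = e^n_{gp}(gx,gy)$ shows that $\tilde\alpha_g(A_{n,p}) = A_{n,gp}$ for all $g\in G$, $p\in\mathcal{P}$ and $n\in\NN$. Since $\mathcal{P} = G\mathcal{S}_G$ and $G$ acts freely on $\mathcal{P}$, for each $p\in\mathcal{S}_G$ the collection $\{A_{n,gp}\mid g\in G\}$ consists of $\#G$ distinct matrix summands, and $\tilde\alpha$ permutes them freely and transitively in the precise sense of Definition \ref{transpermute}; writing $B_{n,p} = \bigoplus_{g\in G}A_{n,gp}\cong\bigoplus_{g\in G}\mathbb{M}_{\#\textnormal{Punc}(n,p)}$ we get
\[
A_n = \bigoplus_{p\in\mathcal{S}_G}B_{n,p}, \qquad \tilde\alpha \text{ restricting on each } B_{n,p} \text{ to a free transitive permutation of its summands.}
\]

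Having identified this structure I would invoke Proposition \ref{locrepfreeAF}. There is one cosmetic point to flag: the block size $\#\textnormal{Punc}(n,p)$ generally depends on $p$ (for the Penrose example $\#\textnormal{Punc}(1,\mathbf{1})=2$ but $\#\textnormal{Punc}(1,\mathbf{21})=3$), so $A_n$ is not literally of the form $\bigoplus_{i\in I(n)}\bigl(\bigoplus_{g\in G}\mathbb{M}_{k(n)}\bigr)$ for a single integer $k(n)$. This is harmless because the proof of Proposition \ref{locrepfreeAF} never uses uniformity of the block sizes. Concretely, for each $n$ set
\[
e^{(n)}_g = \sum_{p\in\mathcal{S}_G} q_{n,gp},
\]
where $q_{n,q}$ denotes the identity of the summand $A_{n,q}$. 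Then the $e^{(n)}_g$ are mutually orthogonal projections lying in $A_n$, each \emph{central} in $A_n$ since it is a sum of units of two-sided ideals; moreover $\tilde\alpha_g(e^{(n)}_h) = \sum_{p\in\mathcal{S}_G}q_{n,ghp} = e^{(n)}_{gh}$, and $\sum_{g\in G}e^{(n)}_g = \sum_{q\in\mathcal{P}}q_{n,q} = 1$ by (\ref{identity}).

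It remains to pass to the inductive limit. Given a finite set $\mathcal{F}\subset AF_\omega$ and $\eps>0$, I would pick $n$ large enough that every $f\in\mathcal{F}$ lies within $\eps/3$ of an element of $A_n$ (possible since the $A_n$ are nested with dense union). As $e^{(n)}_g$ is central in $A_n$ it commutes with the chosen approximants, whence $\|e^{(n)}_g f - f e^{(n)}_g\|<\eps$ for all $f\in\mathcal{F}$ and $g\in G$, while conditions (1) and (3) of Definition \ref{rokdef} hold exactly; so $(e^{(n)}_g)_{g\in G}$ is a family of Rokhlin projections for $\tilde\alpha$, $\mathcal{F}$ and $\eps$. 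There is no real obstacle in this argument; the only step needing a moment's care is ensuring the Rokhlin projections are central in all of $A_n$ rather than merely in each $B_{n,p}$, which is automatic because they are defined by summing over all of $\mathcal{S}_G$.
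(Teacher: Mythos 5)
Your proof is correct and follows the same route as the paper, which likewise reduces the theorem to Proposition \ref{locrepfreeAF} via the decomposition $A_n = \bigoplus_{p\in\mathcal{S}_G}B_{n,p}$ established before Theorem \ref{freeafcrossedproducts}; you have simply written out the details (the explicit central projections $e^{(n)}_g$ and the $\eps/3$ approximation) that the paper leaves implicit in the one-line proof of that proposition. Your observation that the uniform block size $k(n)$ in the statement of Proposition \ref{locrepfreeAF} is not actually satisfied here but is never used in its proof is accurate and worth flagging.
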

\begin{proof}
By the discussion above Theorem \ref{freeafcrossedproducts}, the action of $G$ on $AF_\omega$ satisfies the conditions of Proposition \ref{locrepfreeAF}.
\end{proof}

To prove our next result, we first need a lemma of Putnam. We note that what follows heavily depends on the assumption from Section 2 that the capacity of the boundary of each prototile is strictly less than $d$. In what follows, $\partial(X)$ denotes the topological boundary of the space $X$.
\begin{lemma}(\cite{Pu00}, Lemma 2.3)\label{perimetervolume}
Let $(\mathcal{P}, \omega)$ be a substitution tiling system. For $p\in\mathcal{P}$, $n\in\NN$, and $x\in$ Punc$(n,p)$, we define $D(x)$ to be 
\[
D(x) = \inf\{ \|x - y\| \mid y\in \partial(\textnormal{supp}(\omega^n(p)))\}. 
\]
Then the quotient
\begin{equation}\label{putnamquotient}
\frac{\#\{x\in\textnormal{Punc}(n,p)\mid D(x) < R\} }{\#\textnormal{Punc}(n,p)}
\end{equation}
converges to 0 as $n$ goes to infinity.
\end{lemma}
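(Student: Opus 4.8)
The plan is to estimate the numerator in \eqref{putnamquotient} using the box-counting dimension hypothesis on the prototile boundaries, and then compare it against the denominator, which grows like $\lambda^{dn}$ by Perron--Frobenius. First I would fix $p\in\mathcal P$ and $n\in\NN$ and observe that every puncture $x\in\textnormal{Punc}(n,p)$ with $D(x)<R$ corresponds to a tile $t$ in the supertile $\omega^n(p)$ whose puncture lies within distance $R$ of $\partial(\textnormal{supp}(\omega^n(p)))$. Since each such tile is a translate of one of the finitely many prototiles, it has diameter at most some fixed constant $C_0$, so its puncture lies within distance $R+C_0$ of the boundary; thus all such punctures lie in the set
\[
N_n(R+C_0) := \{ y\in\rd \mid \textnormal{dist}(y,\partial(\textnormal{supp}(\omega^n(p))))< R+C_0\},
\]
a neighbourhood of the boundary of the (scaled) prototile $\lambda^n p$.

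Next I would bound the number of punctures lying in this neighbourhood. Because distinct tiles have punctures that are (essentially) separated — more precisely, each tile $t$ contains a ball of some fixed radius $\rho_0>0$ around its puncture, as the prototiles are finite in number and each has nonempty interior — the number of punctures inside $N_n(R+C_0)$ is at most a constant times the volume of the slightly enlarged neighbourhood $N_n(R+C_0+\rho_0)$. Now the boundary $\partial(\textnormal{supp}(\omega^n(p))) = \partial(\lambda^n p)$ is the $\lambda^n$-dilate of $\partial p$; by the capacity hypothesis, $\partial p$ can be covered by $f_{\textnormal{cap}}(\eps)\le K_{\textnormal{cap}}\eps^{-n_{\textnormal{cap}}}$ balls of radius $\eps$, with $n_{\textnormal{cap}}<d$. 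Covering $\partial(\lambda^n p)$ by $K_{\textnormal{cap}}(\eps/\lambda^n)^{-n_{\textnormal{cap}}}$ balls of radius $\eps$ and taking $\eps$ comparable to the fixed width $R+C_0+\rho_0$ of the tube, I get that the tube $N_n(R+C_0+\rho_0)$ is covered by at most $K' \lambda^{n\,n_{\textnormal{cap}}}$ balls of fixed radius, hence has volume at most $K'' \lambda^{n\,n_{\textnormal{cap}}}$. Therefore
\[
\#\{x\in\textnormal{Punc}(n,p)\mid D(x)<R\} \le K''' \lambda^{n\,n_{\textnormal{cap}}}
\]
for a constant $K'''$ depending on $R$, $p$, and the data of $(\mathcal P,\omega)$ but not on $n$.

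Finally I would handle the denominator: $\#\textnormal{Punc}(n,p)$ is the number of tiles in $\omega^n(p)$, which is the $p$-th coordinate of $M^n$ applied to the all-ones vector (or a suitable row sum of $M^n$), and since $M$ is primitive with Perron eigenvalue $\lambda^d$, this quantity grows exactly on the order of $\lambda^{dn}$; concretely, $\#\textnormal{Punc}(n,p)\ge c\,\lambda^{dn}$ for some $c>0$ and all large $n$. Dividing, the quotient \eqref{putnamquotient} is bounded above by $(K'''/c)\,\lambda^{n(n_{\textnormal{cap}}-d)}$, which tends to $0$ as $n\to\infty$ because $n_{\textnormal{cap}}<d$ and $\lambda>1$. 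The main obstacle I anticipate is making the ``punctures are separated, so counting punctures in a region is controlled by the volume of that region'' step fully rigorous — one needs that each tile contains a fixed-radius ball around its puncture (true since there are finitely many prototiles each with interior containing the origin) and that the tiles have pairwise disjoint interiors, so a volume-packing argument applies; the box-counting estimate itself is then a routine dilation computation. Since this is Putnam's lemma, I would cite \cite{Pu00}, Lemma 2.3, rather than reproduce all the details.
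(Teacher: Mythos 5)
Your argument is correct, and it is exactly the approach the paper has in mind: the paper does not reprove this lemma but cites Putnam and offers only the heuristic that the numerator scales like the perimeter (here, like $\lambda^{n\,n_{\textnormal{cap}}}$ via the capacity hypothesis on prototile boundaries) while the denominator scales like the volume $\lambda^{dn}$. Your tube-covering and ball-packing estimates make that heuristic rigorous in the standard way, so there is nothing to add beyond the citation you already give.
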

Intuitively, for a tiling of $\re^2$, (\ref{putnamquotient}) converges to 0 because the numerator scales with the perimeter of a tile while the denominator scales with the area. This allows us to build positive elements which approximately commute with the generating set
\[
\mathcal{E}_2 = \{ e_{t_1t_2}\mid \textnormal{Int}(t_1\cup t_2)\textnormal{ is connected}\}
\]
of $\Aw$ and have large trace.  In the interest of readability, for the remainder of this section we will write $\|a\|$ for the reduced norm of $a$ for any $a\in \Aw$. 
\begin{lemma}\label{trpgenerators}
Let $(\mathcal{P}, \omega)$ be a substitution tiling system. For any $\varepsilon >0$ we can find a positive element $a_g\in \Aw$ for each $g\in G$ with $0\leq a_g \leq 1$ such that
\begin{enumerate}
\item $a_ga_h = 0$ for each $g, h\in G$ with $g\neq h$,
\item $\|\alpha_g(a_h) - a_{gh}\| < \varepsilon$ for all $g, h\in G$,
\item $\|a_gf - fa_g\| < \varepsilon$ for all $g\in G$ and $f\in \mathcal E_2$, and
\item $\tau\left(1 - \sum_{g\in G} a_g\right) < \varepsilon$, where $\tau$ is $\Aw$'s unique trace. 
\end{enumerate}
\end{lemma}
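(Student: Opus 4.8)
The plan is to produce all of the $a_g$ from a single element by symmetry, setting $a_g := \alpha_g(a_e)$ for a carefully chosen positive contraction $a_e\in\Aw$. With this choice condition (2) is automatic, since $\alpha_g(a_h)=\alpha_g\alpha_h(a_e)=\alpha_{gh}(a_e)=a_{gh}$; condition (1) becomes $a_e\,\alpha_k(a_e)=0$ for every $k\in G\setminus\{e\}$, i.e.\ the supports of $a_e$ and its $G$-translates are pairwise disjoint; and, since the unique trace $\tau$ is $G$-invariant ($\tau\circ\alpha_g$ is again a tracial state on $\Aw$, which has a unique trace), condition (4) reads $\tau\bigl(1-\sum_g a_g\bigr)=1-\#G\cdot\tau(a_e)<\varepsilon$, i.e.\ $\tau(a_e)>(1-\varepsilon)/\#G$. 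Finally $\mathcal{E}_2$ is $G$-invariant ($\alpha_g(e_{t_1t_2})=e_{gt_1,gt_2}$, and $\textnormal{Int}(gt_1\cup gt_2)$ is connected), so once $\|a_e f-f a_e\|<\varepsilon$ for all $f\in\mathcal{E}_2$ we get (3) for every $a_g$. Thus the whole lemma reduces to constructing one positive contraction $a_e$ whose $G$-translates have disjoint supports, whose trace exceeds $(1-\varepsilon)/\#G$, and which approximately commutes with $\mathcal{E}_2$.

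For this, fix a set $\mathcal{S}_G$ of standard-position prototiles. Freeness of the action together with minimality of $\mathcal{S}_G$ forces $G\times\mathcal{S}_G\to\mathcal{P}$, $(g,p)\mapsto gp$, to be a bijection, so in particular $k\mathcal{S}_G\cap\mathcal{S}_G=\emptyset$ for $k\neq e$. For $T\in\op$ let $\rho_n(T)$ be the distance from the origin to the boundary of the support of the $n$-th order supertile $s_n(T)$ of $T$ containing the origin tile, if $s_n(T)$ is in standard position, and $0$ otherwise; this is locally constant on $\op$, hence lies in $C(\op)\subseteq\Aw$. Choose $R>0$ (pinned down below) and a continuous $\phi\colon[0,\infty)\to[0,1]$ with $\phi\equiv 0$ on $[0,R]$, $\phi\equiv 1$ on $[2R,\infty)$ and Lipschitz constant $1/R$, and set $a_e:=\phi\circ\rho_n$. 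Unwinding the identification of $e^n_{p}(x,x)$ with the characteristic function of $\{T\in\op : s_n(T)=\omega^n(p)-x\}$, one has
\[
a_e \;=\; \sum_{p_i\in\mathcal{S}_G}\ \sum_{x\in\textnormal{Punc}(n,p_i)}\phi\bigl(D(x)\bigr)\, e^n_{p_i}(x,x),
\]
which exhibits $a_e$ as a positive element with $0\le a_e\le 1$.

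Two of the three required properties of $a_e$ are then quick. For disjointness of supports, $\textnormal{supp}(a_e)\subseteq\{T : s_n(T)\text{ is standard position}\}$, and using $s_n(kT)=k\,s_n(T)$ together with $k\mathcal{S}_G\cap\mathcal{S}_G=\emptyset$ one sees that $\alpha_k$ maps this set off itself for $k\neq e$; hence $a_e\,\alpha_k(a_e)=0$, giving (1) and also $\sum_g a_g\le 1$, needed so that $\tau(1-\sum_g a_g)\ge 0$. For the trace, since $v_L$ and $n\mapsto\#\textnormal{Punc}(n,\cdot)$ are $G$-invariant, (\ref{tracialstateformula}) gives $\sum_{p_i\in\mathcal{S}_G}\#\textnormal{Punc}(n,p_i)\lambda^{-dn}v_L(i)=1/\#G$, so by (\ref{traceonafw}) and $\phi\ge\chi_{[2R,\infty)}$,
\[
\tau(a_e)\ \ge\ \sum_{p_i\in\mathcal{S}_G}\#\{x : D(x)\ge 2R\}\,\lambda^{-dn}v_L(i)\ =\ \frac{1}{\#G}-\sum_{p_i\in\mathcal{S}_G}\#\{x : D(x)<2R\}\,\lambda^{-dn}v_L(i),
\]
and by Putnam's Lemma \ref{perimetervolume} the subtracted sum is at most $\delta_n/\#G$ with $\delta_n:=\max_i \#\{x\in\textnormal{Punc}(n,p_i):D(x)<2R\}/\#\textnormal{Punc}(n,p_i)\to 0$; choosing $n$ large makes $\tau(a_e)>(1-\varepsilon)/\#G$.

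The approximate commutation is the \emph{main obstacle}. For $f=e_{t_1t_2}\in\mathcal{E}_2$ a direct computation gives $[a_e,f](\gamma)=\bigl(a_e(r(\gamma))-a_e(s(\gamma))\bigr)f(\gamma)$, a function supported on the compact open graph $V(\{t_1,t_2\},t_1,t_2)$ of (\ref{VPt1t2}); since the reduced norm of a function supported on a graph is at most its supremum norm, $\|[a_e,f]\|\le\sup_{\gamma\in V}|a_e(r(\gamma))-a_e(s(\gamma))|$, and for $\gamma\in V$ the tilings $r(\gamma)$ and $s(\gamma)$ differ by a translation by $\x(t_2)-\x(t_1)$, of norm at most $C_0:=\max_{\mathcal{E}_2}\|\x(t_2)-\x(t_1)\|$. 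The key geometric point is that if the origin tile of $r(\gamma)$ lies at distance more than $C_1:=C_0+\max_p\textnormal{diam}(p)$ from the boundary of its supertile, then the tile at the origin of $s(\gamma)$ sits in that same supertile merely translated, so $\rho_n$ changes by at most $C_0$ and the $1/R$-Lipschitz bound gives $|a_e(r(\gamma))-a_e(s(\gamma))|\le C_0/R$; whereas if it lies within $C_1$ of the boundary then (by the symmetric statement for $s(\gamma)$) $\rho_n\le C_1$ at both endpoints, so $a_e$ vanishes at both provided $R\ge C_1$. Hence $\|[a_e,f]\|\le C_0/R$ for all $f\in\mathcal{E}_2$. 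Choosing first $R>\max(C_1,C_0/\varepsilon)$ secures (3), then $n$ large secures (4), and $a_g:=\alpha_g(a_e)$ gives all four conditions. The work is genuinely in this last paragraph: checking that a bounded translation cannot move the origin out of a deep-interior standard-position supertile, and, in the background, the recognizability fact that a translate of $\omega^n(p)$ lying inside a tiling in $\Omega$ is one of the blocks of its $n$-th order supertile decomposition — which both identifies $a_e$ as a function on $\op$ and makes its trace computable.
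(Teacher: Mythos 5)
Your proof is correct and follows essentially the same route as the paper's: both construct $a_e$ as a diagonal element of some $A_s$ supported on standard-position $s$-th order supertiles, vanishing near the supertile boundary and equal to $1$ deep in the interior, set $a_g=\alpha_g(a_e)$ so that conditions (1) and (2) are automatic, and invoke Putnam's Lemma \ref{perimetervolume} for the trace estimate. The only difference is cosmetic: the paper ramps from $0$ to $1$ through discrete combinatorial layers $\rho^k(s,p)$ with values $b_k$ increasing by less than $\varepsilon/2$ per layer (so that adjacent tiles lie in adjacent layers), whereas you compose the Euclidean distance to the supertile boundary with a $1/R$-Lipschitz cutoff — the same estimate in different clothes.
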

\begin{proof}
For $x\in \textnormal{Punc}(n,p)$ let $t(x)$ be the tile such that $x\in t(x)$, and if $X$ is a set of punctures denote the set of tiles with elements of $X$ as punctures by $t(X)$. We define, for $k\geq 0$, collections of punctures $\rho^k(n,p)$ as follows:
\begin{eqnarray*}
\rho^0(n,p) & = & \{x\in\textnormal{Punc}(n,p)\mid t(x) \cap \partial(\supp(\omega^n(p)))\neq\emptyset\}\\
\rho^k(n,p) & = & \left\{x\in\textnormal{Punc}(n,p)\mid t(x) \cap \left(\partial\left(\supp\left(\omega^n(p)\setminus\cup_{i=0}^{k-1} t(\rho^i(n,p)) \right)\right)  \right)\neq\emptyset\right\}
\end{eqnarray*}
Here, $\partial(A)$ denotes the topological boundary of the set $A\subset \re^d$. Loosely speaking, $\rho^0(n,p)$ is the set of punctures around the edge of the patch $\omega^n(p)$, $\rho^1(n,p)$ is the set of punctures around the edge of the patch $\omega^n(p)$ after removing the outer layer, and so on. 
\begin{center}
\includegraphics[width=\textwidth]{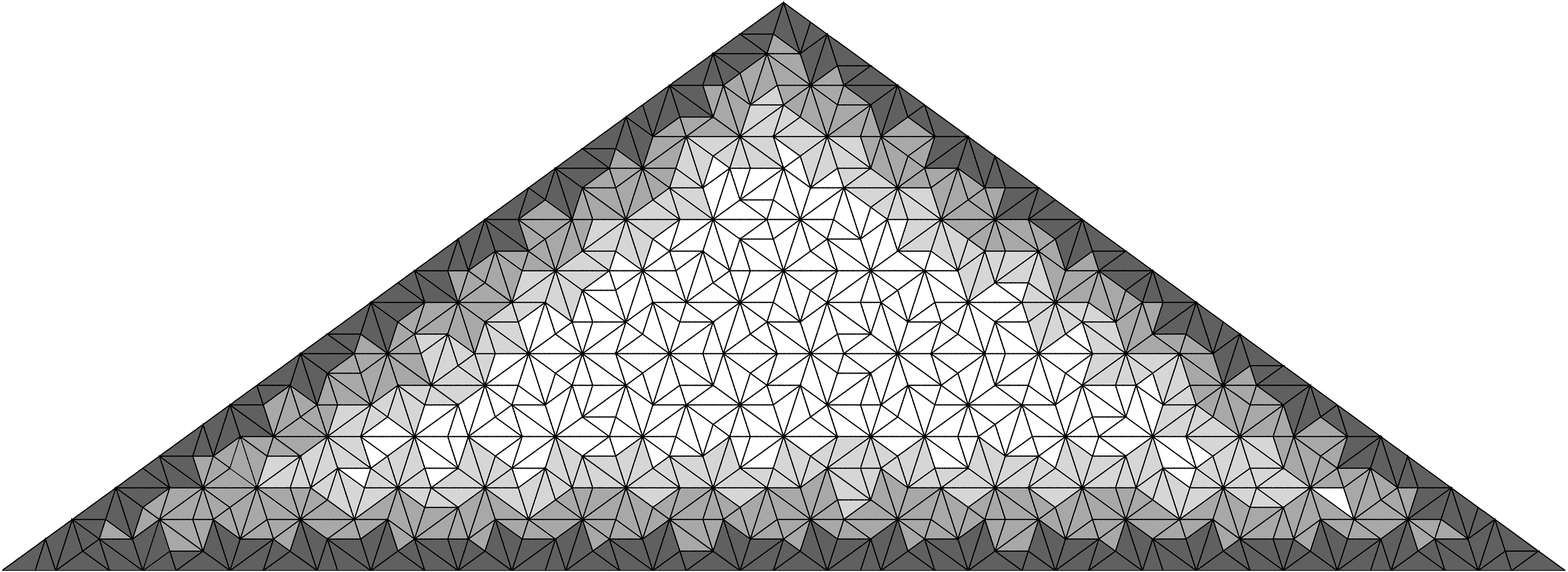}
\end{center}
In this picture, the punctures of the darkest tiles are $\rho^0(n,p)$, the punctures of the next darkest are $\rho^1(n,p)$ and the punctures of the lightest gray are $\rho^2(n,p)$. We notice that there exists $k'\in\NN$ such that for all $k>k'$ we have that $\rho^k(n,p)$ is empty.

Let $\varepsilon >0$ and find $N\in\NN$ such that $N>\frac{2}{\varepsilon}$. Let $\mathfrak{d}$ be the maximum diameter among the prototiles, and let $R>0$ be such that $R> 2N\mathfrak{d}$. By Lemma \ref{perimetervolume}, there exists $s\in\NN$ such that 
\begin{equation}\label{smalltrace}
\frac{\#\{x\in\textnormal{Punc}(s,p)\mid D(x) < R\} }{\#\textnormal{Punc}(s,p)} < \varepsilon.
\end{equation}
Since punctures can be no more than $2\mathfrak{d}$ apart by the triangle inequality, for all $i$ with $0\leq i \leq N$ and $x\in\rho^i(s,p)$ we must have that $D(x)< R$. Let $b_k$ be numbers such that $b_0 = 0$, $0 < b_j - b_{j-1} < \frac{\varepsilon}{2}$, and $b_k = 1$ for all $k > N$. We may find these numbers because $N\cdot\frac{\varepsilon}{2} > 1$. For the identity element $e\in G$, let 
\[
a_e = \sum_{k=0}^\infty \left(b_k \sum_{p\in\mathcal{S}_G\atop x\in\rho^k(s,p)} e^s_p(x,x)\right),
\]
and more generally
\[
a_g = ga_e= \sum_{k=0}^\infty \left(b_k \sum_{p\in\mathcal{S}_G\atop x\in\rho^k(s,p)} e^s_{gp}(gx,gx)\right).
\]
Notice that these sums are finite because $\rho^k(s,p)$ are eventually empty, and notice also that each $a_g$ is an element of the finite dimensional algebra $A_s$. In addition, each $a_g$ is a real-valued function in $C(\op)\subset C^*_r(\rp)$ which takes values between 0 and 1. Hence both $a_g$ and $1-a_g$ have positive square roots, and so $0\leq a_g\leq 1$ for all $g\in G$.

Let us pause for a moment to give an intuitive description of the $a_g$. If we think of them as functions on Punc$(s,p)$, they take the value 0 on the punctures around the boundary of $\omega^s(p)$, they take the value 1 on most of the punctures in the interior, and the values increase gradually from 0 to 1 as we move from the boundary towards the middle. The values that the $a_g$ take on punctures whose tiles share an edge always differ by less than $\varepsilon$. Furthermore, they only take values less than 1 in a relatively small band of punctures near the boundary.

Since $a_g$ is supported on the diagonal of $\rp$ for all $g\in G$, we have that for any given $T\in \op$, $a_g(T,T')$ is only possibly nonzero if $T'=T$, and so by Equation (\ref{rsnorms}), 
\[
\|a_g\|_r = \sup_{T\in \op} \left\{\sum_{T'\in [T]}|a_g(T,T')|\right\} = \sup_{T\in \op} |a_g(T,T)| = 1,
\]
\[
\|a_g\|_s = \sup_{T\in \op} \left\{\sum_{T'\in [T]}|a_g(T',T)|\right\} = \sup_{T\in \op} |a_g(T,T)| = 1.
\]

Since $\|a_g\|_I$ is the max of these two norms and $\|a_g\|_I$ dominates the reduced norm, we have $\|a_g\|\leq1$.  The $a_g$ elements satisfy Condition 1 trivially, and from the previous section we see that $ga_h = a_{gh}$.  

To prove Condition 3, recall that $e^s_p(x,x)$ is the characteristic function of the set $E^s_p(x,x)$, which is a compact open subset of the unit space. Take 
\[
q = e_{t_1t_2}\in \mathcal E_2,
\]
and calculate
\[
(a_gq)(T, T') = \sum_{k=0}^\infty \left(b_k \sum_{p\in\mathcal{S}_G\atop x\in\rho^k(s,p)} e^s_{gp}(gx,gx)q(T, T')\right).
\] 
We have
\begin{eqnarray*}
e^s_{gp}(gx,gx)q(T, T') &=& \begin{cases} q(T,T') & \text{if }T\in E^s_{gp}(gx,gx)\\ 0 & \text{otherwise}\end{cases}\\
                                       &=& \begin{cases} 1 & \text{if }(T,T') \in V(\{t_1, t_2\},t_1, t_2)\text{ and}\\ & T\in E^s_{gp}(gx,gx)\\ 0 & \text{otherwise.}\end{cases}
\end{eqnarray*}
Given $T\in\op$, there exist unique $g\in G$, $p\in \mathcal{P}$ and $x\in$ Punc$(s,p)$ such that $T\in E^s_{gp}(gx,gx)$. This puncture $x$ must be an element of  $\rho^k(s,p)$ for some $k$. Then in this case we have
\[
(a_gq)(T, T') = \left\{\begin{array}{ll} b_k & \text{if }(T,T') \in V(\{t_1, t_2\},t_1, t_2)\\ 0 & \textnormal{otherwise.}\end{array}\right.
\] 
Now we calculate $qa_g$:
\[
(qa_g)(T, T') = \sum_{k=0}^\infty \left(b_k \sum_{p\in\mathcal{S}_G\atop x\in\rho^k(s,p)} qe^s_{gp}(gx,gx)(T, T')\right),
\] 
and similar to above
\begin{eqnarray*}
qe^s_{gp'}(gy,gy)(T, T') &=& \left\{\begin{array}{ll} q(T,T') & \text{if }T'\in E^s_{gp'}(gy,gy),\\ 0 & T'\notin E^s_{gp'}(gy,gy)\end{array}\right.\\
                                             &=& \left\{\begin{array}{ll} 1 & \text{if }(T,T') \in V(\{t_1, t_2\},t_1, t_2)\textnormal{ and}\\ & T'\in E^s_{gp}(gy,gy)\\ 0 & \textnormal{otherwise.}\end{array}\right.
\end{eqnarray*}
As above, given $T'\in\op$ there exist unique $g\in G$, $p\in \mathcal{P}$ and $y\in$ Punc$(s,p)$ such that $T\in E^s_{gp}(gy,gy)$. This puncture $y$ must be an element of  $\rho^m(s,p)$ for some $k$. Then in this case we have
\[
(qa_g)(T, T') = \left\{\begin{array}{ll} b_m & \text{if }(T,T') \in V(\{t_1, t_2\},t_1, t_2)\\ 0 & \textnormal{otherwise.}\end{array}\right.
\] 
Hence, we may calculate the difference
\begin{eqnarray*}
(a_gq - qa_g)(T,T') &=& \left\{\begin{array}{ll} b_k-b_m & (T,T') \in V(\{t_1, t_2\},t_1, t_2),\\
                                                         & T\in E^s_{gp}(gx,gx), x\in \rho^k(s,p), \textnormal{ and} \\
                                                         & T'\in E^s_{gp'}(gy,gy),y\in \rho^m(s,p')\\ 
                                                       0 & \textnormal{otherwise.}\end{array}\right.
\end{eqnarray*}
If we are in the first case and $p\neq p'$, then $k$ and $m$ must both be zero. Indeed, if $p\neq p'$, then $\{t_1, t_2\}$ is a two-tile pattern whose edge lies along the boundary of $gp$ and $gp'$, and hence $t(x)$ and $t(y)$ intersect the boundaries of $\omega^s(p)$ and $\omega^s(p')$ respectively. Thus $x\in\rho^0(s,p)$ and $y\in\rho^0(s,p')$. In the case where $p=p'$, the conditions in the first case above imply that the patch $\{t(gx), t(gy)\}$ is a translate of $\{t_1, t_2\}$. Hence the difference between $k$ and $m$ is at most 1, and by the definition of the $b_i$ this implies that $|b_k-b_m|< \frac{\varepsilon}{2}$. Furthermore, if $T\in \op$, there is at most one $T'$ for which $(a_gq - qa_g)(T,T')$ is nonzero, namely $T' = T + x_{t_1} - x_{t_2}$ if $T$ happens to be in $U(\{t_1, t_2\}, t_1)$. Hence 
\[
\|a_gq - qa_g\|_{r} = \sup_{T\in\op}\left\{\sum_{T'\in [T]}|(a_gq - qa_g)(T,T')|\right\} \leq \frac{\varepsilon}{2}
\]
\[
\|a_gq - qa_g\|_{s} = \sup_{T'\in\op}\left\{\sum_{T\in [T']}|(a_gq - qa_g)(T,T')|\right\} \leq \frac{\varepsilon}{2}
\]
\[
\|a_gq - qa_g\| \leq \max\{\|a_gq - qa_g\|_{r}, \|a_gq - qa_g\|_{s}\} \leq \frac{\varepsilon}{2} < \varepsilon.
\]
Hence Condition 3 is satisfied. To prove Condition 4 we use Equation (\ref{smalltrace}). The function $1- \sum_{g}a_g$ is nonnegative and is only nonzero on elements $(T,T)$ such that 
\[
(T,T)\in \bigcup_{p\in\mathcal P}\bigcup_{i=0}^{N-1}\bigcup_{x\in\rho^i(s,p)} E^s_p(x,x).
\]
Notice that the above union is a disjoint union. Hence for every $T\in\op$ we have that 
\[
 \left(1- \sum_{g}a_g\right)(T,T) \leq \sum_{p\in\mathcal{P}}\sum_{i=0}^{N-1}\sum_{x\in\rho^i(s,p)}e^s_p(x,x)(T,T).
\]
We now calculate the value of our trace $\tau$ on this element. We have
\begin{eqnarray*}
\tau\left(1- \sum_{g}a_g\right) &=& \sum_{p\in\mathcal{P}}\sum_{i=0}^{N-1}\sum_{x\in\rho^i(s,p)}\tau(e^s_p(x,x))\\
                     &=& \sum_{p\in\mathcal{P}}\sum_{i=0}^{N-1}\sum_{x\in\rho^i(s,p)}\lambda^{-ds}v_L(p)\\
                     &\leq& \sum_{p\in\mathcal{P}}\#\{x\in\textnormal{Punc}(s,p)\mid D(x) < R\}\lambda^{-ds}v_L(p)\\
                     &<&\varepsilon\sum_{p\in\mathcal{P}}\#\textnormal{Punc}(s,p)\lambda^{-ds}v_L(p)\\
                     &=&\varepsilon
\end{eqnarray*}
where the last line is by Equation (\ref{tracialstateformula}).
\end{proof}
We note that the $a_g$ in the above proof are constructed using a technique similar to that seen in the proof of \cite{Th10}, Theorem 4.32.

Since $\mathcal{E}_2$ is a generating set for $\Aw$, we have the following.

\begin{theorem}\label{AwWRP}
Let $(\mathcal{P}, \omega)$ be a substitution tiling system and suppose that $G$ is a symmetry group for $(\mathcal{P},\omega)$ that acts freely on $\mathcal P$. Then the action of $G$ on $\Aw$ has the weak Rokhlin property.
\end{theorem}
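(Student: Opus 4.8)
The plan is to deduce Theorem \ref{AwWRP} directly from Lemma \ref{trpgenerators} by extracting a central sequence from the approximating elements $a_g$ and checking the three conditions in Definition \ref{WRP}. First I would note that, by Lemma \ref{minimalsemidirectgroupoid} (or rather the analogous simplicity statement for $\rp$ itself) together with Corollary \ref{uniquetracecor}, $\Aw$ is unital, simple, and has a (unique) normalized trace $\tau$, so Definition \ref{WRP} applies and the quantification over $\tau \in \mathfrak{T}(\Aw)$ collapses to the single trace $\tau$.

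The construction of the sequence goes as follows. For each $m \in \NN$, apply Lemma \ref{trpgenerators} with $\varepsilon = 1/m$ to obtain positive contractions $a_g^{(m)} \in \Aw$, $g \in G$, satisfying conditions (1)--(4) of that lemma. Set $f_m := a_e^{(m)}$, the element attached to the identity of $G$. I would then verify each requirement of Definition \ref{WRP} in turn. For the centrality of $(f_m)$: condition (3) of Lemma \ref{trpgenerators} gives $\|f_m q - q f_m\| < 1/m$ for every $q \in \mathcal E_2$; since $\mathcal E_2$ is a finite generating set for $\Aw$ by Proposition \ref{Awfinitelygenerated}, and since each $f_m$ is a contraction, a standard $\varepsilon/3$-style argument propagates approximate commutation from the generators to arbitrary finite products and finite linear combinations of generators, hence to a dense subset of $\Aw$, and finally to all of $\Aw$; thus $\|f_m b - b f_m\| \to 0$ for every $b \in \Aw$, i.e. $(f_m)$ is central. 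For the disjointness of the images: for $g \neq h$ write $\alpha_g(f_m)\alpha_h(f_m) = \alpha_g\big(f_m \, \alpha_{g^{-1}h}(f_m)\big)$; using condition (2) of the lemma, $\alpha_{g^{-1}h}(f_m) = \alpha_{g^{-1}h}(a_e^{(m)})$ is within $1/m$ of $a_{g^{-1}h}^{(m)}$, and since $g^{-1}h \neq e$ condition (1) gives $f_m \, a_{g^{-1}h}^{(m)} = 0$; hence $\|\alpha_g(f_m)\alpha_h(f_m)\| \leq \|f_m\|\cdot 1/m \leq 1/m \to 0$. For the trace condition: condition (4) gives $\tau\big(1 - \sum_{g \in G} a_g^{(m)}\big) < 1/m$, while condition (2) and the traciality and $\alpha$-invariance of $\tau$ (the unique trace is $G$-invariant, since $\tau \circ \alpha_g$ is again a normalized trace) give $|\tau(a_g^{(m)}) - \tau(f_m)| = |\tau(a_g^{(m)}) - \tau(\alpha_g(f_m))| \leq \|a_g^{(m)} - \alpha_g(f_m)\| < 1/m$ for all $g$; combining, $|1 - \#G \cdot \tau(f_m)| < (\#G + 1)/m$, so $|\tau(f_m) - (\#G)^{-1}| \to 0$.

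The only genuinely non-routine point is the propagation of approximate centrality from the finite generating set $\mathcal E_2$ to all of $\Aw$; everything else is bookkeeping around the conclusions of Lemma \ref{trpgenerators}. I expect this to be the main obstacle, though a mild one: one must be slightly careful because the relevant bound for a word $q_1 \cdots q_k$ in the generators is roughly $k/m$ (each generator a contraction), so the $1/m$ must be small relative to the word length and the coefficients of the fixed element one is testing against — but since the conclusion only requires $\|f_m b - b f_m\| \to 0$ for each fixed $b$, one chooses $m$ large depending on $b$, and this is exactly what a central sequence requires. It is worth remarking explicitly that, unlike the Rokhlin property verified for $AF_\omega$, here we genuinely need the $a_g$ to be merely positive contractions rather than projections, because $\Aw$ need not contain a family of Rokhlin projections adapted to the action; this is precisely why we obtain the weak Rokhlin property of Matui--Sato rather than the Rokhlin property of Phillips.
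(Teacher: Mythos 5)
Your proposal is correct and follows essentially the same route as the paper, which simply takes $f_n = a_e^{(n)}$ from Lemma \ref{trpgenerators} applied along a sequence $\eps_n \to 0$ and declares the verification of Definition \ref{WRP} straightforward. You have merely written out the details (propagation of approximate centrality from the finite generating set $\mathcal{E}_2$, orthogonality via conditions (1)--(2), and the trace estimate via condition (4) and $G$-invariance of the unique trace) that the paper leaves implicit.
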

\begin{proof}
It is straightforward from the above that if we take any sequence of positive numbers $\eps_n \to 0$ and apply the proof of Lemma \ref{trpgenerators} for $\eps_n$ to obtain $\{a^{(n)}_g\}_{g\in G}$, then $(a^{(n)}_e)_{n\in\NN}$ is a central sequence satistfying the conditions of Definition \ref{WRP}.
\end{proof}

The rest of this section concerns the {\em tracial rank zero} property, which was first defined by Lin in \cite{Lin01}. Tracial rank can be seen as a noncommutative analogue of topological dimension, see \cite{Lin2}. We will not give the general definition, but we state an equivalent form in the context of simple C*-algebras which was proved by Lin in \cite{Lin2} and which was stated as given by Brown in \cite{Br06}. In what follows, for C*-algebras $A$ and $B$ with $B\subset A$, $a\in A$ and $\varepsilon >0$, the notation $a \in_\varepsilon B$ means $\inf\{ \|b - a\| \mid b\in B\} < \varepsilon$.
\begin{theorem}(\cite{Br06}, Theorem 4.5.1) \label{rokbrown} Let $A$ be a simple C*-algebra. Then $A$ has tracial rank zero if and only if $A$ has real rank zero, stable rank one, the order of projections on $A$ is determined by traces, and for every finite subset $\mathcal F \subset A$ and $\varepsilon>0$ there exists a finite dimensional subalgebra $F\subset A$ with $p = 1_F$ such that:
\begin{enumerate}
\item $\|pf-fp\| < \varepsilon$ for all $f\in \mathcal F$,
\item $pfp \in_\varepsilon F$ for all $f\in \mathcal F$, and
\item $\tau(p) > 1-\varepsilon$ for $\tau\in\mathfrak{T}(A)$. 
\end{enumerate}
\end{theorem}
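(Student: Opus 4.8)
This is a theorem of Lin recorded by Brown, so rather than a full argument I indicate the shape a proof takes, freely invoking Lin's structural results for simple C*-algebras of tracial rank zero. Recall that Lin's original definition declares $A$ to have tracial rank zero when for every finite $\mathcal F\subset A$, every $\varepsilon>0$, and every \emph{nonzero positive} $a\in A$ there is a finite dimensional $F\subset A$ with $p=1_F$ satisfying $\|pf-fp\|<\varepsilon$ and $pfp\in_\varepsilon F$ for all $f\in\mathcal F$, together with the comparison condition that $1-p$ is Cuntz subequivalent to $a$. The content of the statement is that, for simple $A$, this comparison condition may be traded for the bundle of global properties (real rank zero, stable rank one, order on projections determined by traces) together with the trace estimate $\tau(p)>1-\varepsilon$.

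For the forward implication, conditions (1) and (2) are literally part of the definition. The real rank zero, stable rank one, and order-by-traces assertions are exactly Lin's structure theorems for simple tracial rank zero algebras; the plan is to cite these rather than reprove them, as each is obtained by a cut-down argument in which one approximates inside $pAp\oplus (1-p)A(1-p)$, uses that the finite dimensional corner $F\subset pAp$ already has the property in question, and absorbs the error into the small corner $(1-p)A(1-p)$. To obtain (3) one applies the definition with a judiciously chosen $a$: using simplicity and weak-$*$ compactness of $\mathfrak T(A)$ one first produces a nonzero positive $a$ with $\sup_{\tau}d_\tau(a)<\varepsilon$, whence $1-p\precsim a$ forces $\tau(1-p)\le d_\tau(a)<\varepsilon$ for all $\tau\in\mathfrak T(A)$.

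For the converse one must recover Lin's comparison condition from the four listed properties. Given a nonzero positive $a$, simplicity gives $d_\tau(a)>0$ for every $\tau$, and compactness of $\mathfrak T(A)$ gives $\delta:=\inf_{\tau}d_\tau(a)>0$; I would then apply (3) with $\varepsilon<\delta$ to obtain a finite dimensional $F$ with $p=1_F$ satisfying (1), (2) and $\tau(1-p)<\varepsilon<d_\tau(a)$ for all $\tau$. Real rank zero and stable rank one let one replace $a$ by a projection $q$ in $\overline{aAa}$ with $\tau(q)$ as close to $d_\tau(a)$ as desired, and then the hypothesis that the order on projections is determined by traces upgrades the strict trace inequality $\tau(1-p)<\tau(q)$ to $1-p\precsim q\precsim a$. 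Thus the defining condition for tracial rank zero holds.

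The main obstacle is precisely this comparison step in both directions: passing between ``$1-p$ is Cuntz-small'' and ``$1-p$ has small trace''. This is exactly where real rank zero and stable rank one do the work, since they guarantee enough projections and enough cancellation for trace inequalities to become subequivalences, and it is the part of the argument I would import wholesale from Lin's papers rather than reconstruct here.
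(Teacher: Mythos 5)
The paper does not actually prove this statement---it is imported verbatim from Brown's memoir (Theorem 4.5.1 of \cite{Br06}), which in turn packages Lin's theorems---so there is no internal argument to compare yours against. Your sketch is a correct account of how the equivalence is established in those sources: the forward direction cites Lin's structure theorems and extracts the trace estimate by applying the comparison clause of the definition to a positive element of uniformly small trace-rank, and the converse recovers that clause from the trace estimate via strict comparison, with the only delicate points (uniformity over $\mathfrak{T}(A)$, handled by lower semicontinuity of $d_\tau$ and compactness of the trace simplex) correctly identified and addressed.
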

The following is a corollary of Theorem \ref{AwWRP}.
\begin{corollary}\label{tracialrankzerocor}
Let $(\mathcal{P}, \omega)$ be a substitution tiling system and suppose that $G$ is a symmetry group for $(\mathcal{P},\omega)$ that acts freely on $\mathcal P$. If $\Aw$ has tracial rank zero, then $\Aw\rtimes G$ has tracial rank zero.
\end{corollary}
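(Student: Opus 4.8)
The plan is to combine the weak Rokhlin property established in Theorem \ref{AwWRP} with the hypothesis that $\Aw$ has tracial rank zero, and to verify the characterization of tracial rank zero in Theorem \ref{rokbrown} directly for $\Aw\rtimes G$. First, since $\Aw\rtimes G$ is simple by Lemma \ref{minimalsemidirectgroupoid} and Proposition \ref{semidirectcrossedproduct}, and since by the corollaries in Section \ref{crossedproductsection} it already has real rank zero, stable rank one, and order on projections determined by traces, it remains only to produce, for a given finite set $\mathcal F\subset \Aw\rtimes G$ and $\varepsilon>0$, a finite dimensional subalgebra $F$ with unit $p$ satisfying Conditions (1)--(3) of Theorem \ref{rokbrown}. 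It suffices to treat $\mathcal F$ of the form $\{f\delta_g \mid f\in\mathcal F_0,\ g\in G\}$ for a finite set $\mathcal F_0\subset\Aw$, since such elements densely span $\Aw\rtimes G$.

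The key step is to use the central sequence $(a^{(n)}_e)_{n}$ from the proof of Lemma \ref{trpgenerators}, together with its translates $a^{(n)}_g = ga^{(n)}_e$, which for large $n$ give positive elements $b_g := a^{(n)}_g$ with $0\le b_g\le 1$, pairwise products $b_gb_h$ small for $g\ne h$, $\|\alpha_g(b_h)-b_{gh}\|$ small, $\|b_gf-fb_g\|$ small for $f\in\mathcal F_0$, and $\tau(1-\sum_g b_g)$ small. Because $\Aw$ has real rank zero, one may functional-calculus-perturb $b_e$ to a projection; more efficiently, I would invoke that the $a_g$ already lie in the finite dimensional algebra $A_s\subset AF_\omega$, so up to shrinking $\varepsilon$ one can choose from the start the $b_g$ to be mutually orthogonal projections $e_g$ in $A_s$ with $\sum_g e_g$ having trace close to $1$ and $\|e_gf-fe_g\|<\varepsilon$ (replace the gradual band of values $b_k$ by the sharp cutoff, accepting a slightly larger "edge" region — this still has small trace by Lemma \ref{perimetervolume}). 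These $e_g$ then behave like Rokhlin projections for the purposes of this argument: set $e=e_e$ and $p=\sum_{g}\alpha_g(e)$, which one checks is a projection close to $\sum_g e_g$.

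Now, since $\Aw$ itself has tracial rank zero, apply Theorem \ref{rokbrown} inside $\Aw$: after cutting down by $e$, there is a finite dimensional subalgebra $F_0\subset e\Aw e$ with unit a projection $q\le e$, $\tau(q)/\tau(e)$ close to $1$, approximately containing $ef e$ for the relevant elements $f$. Then form $F = \operatorname{span}\{\alpha_g(x)\,\delta_{gh^{-1}} : x\in F_0,\ g,h\in G\}$, which is a finite dimensional subalgebra of $\Aw\rtimes G$ isomorphic to $\mathbb M_{\#G}\otimes F_0$ via the construction in the lemma preceding Theorem \ref{freeafcrossedproducts}, with unit $p' = \sum_g \alpha_g(q)$. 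One verifies (1)--(3): commutation with $f\delta_g$ follows because $p'$ is built from the approximately-central projections $\alpha_g(e)$ and the covariance relation $\delta_g q \delta_{g^{-1}}=\alpha_g(q)$; approximate containment $p'(f\delta_g)p' \in_\varepsilon F$ reduces, after using $\alpha_g(e)f\approx f\alpha_{?}(e)$, to the corresponding statement for $ef e\in_\varepsilon F_0$ in $\Aw$; and $\tau'(p')$ is close to $1$ for the unique trace $\tau'$ on $\Aw\rtimes G$ because $\tau'$ restricts (via the canonical conditional expectation onto $\Aw$ and Corollary \ref{uniquetracecor}) to the unique trace $\tau$ on $\Aw$, and $\tau'(p')=\#G\cdot\tau'(\alpha_e(q))\cdot(\#G)^{-1}\cdot\ldots$ works out to $\tau(q)/\tau(e)\cdot\tau(\sum_g e_g)$, which is close to $1$.

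The main obstacle I expect is bookkeeping the interaction between the two approximations — the Rokhlin-type projections $e_g$ coming from the tiling structure and the finite dimensional subalgebra $F_0$ coming from tracial rank zero of $\Aw$ — so that the errors in commutation and in the conditional-expectation/trace estimate compose correctly; in particular one must be careful that cutting down $\mathcal F_0$ by $e$ and then approximating inside $e\Aw e$ does not spoil the $G$-equivariant reassembly. Once the standard fact that $F\cong\mathbb M_{\#G}\otimes F_0$ (via the lemma before Theorem \ref{freeafcrossedproducts}) is in hand, the rest is routine $\varepsilon/3$-style estimation, and Theorem \ref{rokbrown} applied to $\Aw\rtimes G$ gives the conclusion.
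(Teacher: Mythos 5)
Your overall strategy --- verifying Theorem \ref{rokbrown} directly for $\Aw\rtimes G$ by combining Rokhlin-type elements with the finite-dimensional approximation coming from tracial rank zero of $\Aw$ --- is a legitimate route in principle (it amounts to re-deriving Phillips' crossed-product theorem in this special case), but it contains a genuine gap at the step where you manufacture the projections $e_g$. You propose to replace the gradually increasing values $b_0 < b_1 < \cdots < b_N = 1$ in the construction of $a_g$ by a sharp cutoff (equivalently, to take a spectral projection of $a_e$, which has finite spectrum $\{b_k\}$). This destroys exactly the property you need: the estimate $\|a_g q - q a_g\| \le \varepsilon/2$ for $q = e_{t_1t_2}$ in Lemma \ref{trpgenerators} rests entirely on the fact that adjacent punctures lie in consecutive bands $\rho^k(s,p)$, $\rho^{k\pm1}(s,p)$, so that the commutator takes only values $b_k - b_m$ with $|k-m|\le 1$, hence of modulus less than $\varepsilon/2$. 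With a $\{0,1\}$-valued cutoff, any adjacent pair $\{t_1,t_2\}$ straddling the cutoff boundary gives $(e_gq - qe_g)(T,T') = 1$ on a nonempty set, and since $\|\cdot\|_\infty \le \|\cdot\|_{\mathrm{red}}$ the commutator has reduced norm at least $1$, not less than $\varepsilon$. Enlarging the edge region does not help; the problem is the jump, not the trace. This is precisely why the paper only obtains positive elements (hence the \emph{weak} Rokhlin property) and does not construct Rokhlin projections by hand: upgrading approximately central positive contractions to approximately central projections requires genuine machinery, not functional calculus on a single element.

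The paper's own proof sidesteps this entirely by a chain of citations: the weak Rokhlin property (Theorem \ref{AwWRP}) together with \cite{MaSa12}, Remark 2.8 shows the action is strongly outer; since $\Aw$ has a unique trace and is assumed to have tracial rank zero, \cite{MaSa12}, Theorem 5.1 yields the tracial Rokhlin property of Phillips; and \cite{Ph11}, Theorem 2.6 then gives that $\Aw\rtimes G$ has tracial rank zero. The hard analytic content you are trying to supply by hand --- producing honest Rokhlin projections, and then transporting the finite-dimensional approximations through the crossed product --- is exactly what those two cited theorems contain. If you want a self-contained argument you would need the central-sequence techniques behind Matui--Sato's theorem (or the real rank zero of $\Aw$ used much more carefully) to repair the projection step; as written, the sharp-cutoff step fails.
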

\begin{proof}
By Theorem \ref{AwWRP} and \cite{MaSa12}, Remark 2.8, the action of $G$ on $\Aw$ is strongly outer (\cite{MaSa12}, Definition 2.7 (2)). Since $\Aw$ has a unique trace, \cite{MaSa12} Theorem 5.1 implies that the action of $G$ on $\Aw$ has the tracial Rokhlin property of Phillips (\cite{Ph09}, Definition 3.1). Then by \cite{Ph11}, Theorem 2.6, $\Aw\rtimes G$ has tracial rank zero.
\end{proof}

We conclude with remarks concerning the tracial rank zero property.

\begin{rmk} Our first remark concerns Corollary \ref{tracialrankzerocor}. In \cite{Ph05}, Question 8.1, Phillips asks the question of whether every C*-algebra of an almost AF Cantor groupoid has tracial rank zero. If the answer to this question is yes, then it would appear that Corollary \ref{tracialrankzerocor} would follow immediately, as both $\rp$ and $\rp\rtimes G$ are almost AF Cantor groupoids. However, it is a fact that if $\rp$ is the groupoid formed from any tiling of $\re^d$ consisting of polytopes which meet full-edge to full-edge which has repetitivity and strong aperiodicity, then there exists a free minimal transformation group groupoid $(X, \bz^d)$ with $X$ homeomorphic to the Cantor set and a clopen $U\subset X$ such that $\rp$ is isomorphic to $(X, \bz^d)^U_U$, see \cite{SW03}. There is no such result for the groupoid $\rp\rtimes G$. Hence Corollary \ref{tracialrankzerocor} would tell us that $\Aw\rtimes G$ has tracial rank zero if one could prove that $C(X)\rtimes \bz^d$ has tracial rank zero for all free minimal actions of $\bz^d$ on the Cantor set $X$.
\end{rmk}
\begin{rmk}
In the author's PhD thesis \cite{StThesis}, it was proved directly (using Lemma \ref{trpgenerators}) that if one assumes that $\Aw$ has tracial rank zero, then the action of $G$ on $\Aw$ has the tracial Rokhlin property. From this, Corollary \ref{tracialrankzerocor} was obtained. 
\end{rmk}
\begin{rmk}
For $\mathcal{F} = \mathcal{E}_2 \subset \Aw$ and $\eps >0$, the elements $a_g\in \Aw$ in the proof of Lemma \ref{trpgenerators} are such that if we set $a = \sum_{g\in G}a_g$, then $a$ satisfies conditions 1--3 in Theorem \ref{rokbrown}. Of course $a$ is not a projection, so we cannot conclude that $\Aw$ is tracial rank zero.
\end{rmk}

{\bf Acknowledgements.} The author wishes to thank Thierry Giordano, Ian Putnam, Michael Whittaker, Daniel Gon\c {c}alves, David Handelman, and Siegfried Echterhoff for many helpful conversations and suggestions regarding this work. In particular, great gratitude is due Thierry Giordano who supervised work on the author's PhD thesis from which this paper is derived. We would also like to thank the referee for his many helpful comments and suggestions.

\end{document}